\theoremstyle{definition}\newtheorem{definition}{Definition}[section]
\newtheorem{notation}[definition]{Notation}
\newtheorem{terminology}[definition]{Terminology}
\newtheorem{remark}[definition]{Remark}
\newtheorem{example}[definition]{Example}
\newtheorem{examples}[definition]{Examples}
\newtheorem{property}[definition]{Property}}
\newtheorem{proposition}[definition]{Proposition}
\newtheorem{lemma}[definition]{Lemma}
\newtheorem{theorem}[definition]{Theorem}
\newtheorem{defprop}[definition]{Definition/Proposition}
\newtheorem{corollary}[definition]{Corollary}
\newtheorem{step}{Step}
\newcommand{\cK}{\mathcal{K}}
\newcommand{\Qh}{\widehat{\Q}}
\newcommand{\transp}{\! ^\top}
\newcommand{\cP}{\mathcal{P}}
\newcommand{\pitil}{\widetilde{\pi}}
\newcommand{\Deltatil}{\widetilde{\Delta}}
\newcommand{\rL}{\operatorname{L}}
\newcommand{\PGL}{\operatorname{PGL}}
\newcommand{\PSL}{\operatorname{PSL}}
\newcommand{\Ind}{\operatorname{Ind}}
\newcommand{\si}{\sigma}
\newcommand{\recht}{\rightarrow}
\newcommand{\Ker}{\operatorname{Ker}}
\newcommand{\Out}{\operatorname{Out}}
\newcommand{\Aut}{\operatorname{Aut}}
\newcommand{\Char}{\operatorname{Char}}
\newcommand{\cZ}{\mathcal{Z}}
\newcommand{\actson}{\curvearrowright}
\newcommand{\al}{\alpha}
\newcommand{\ox}{\overline{x}}
\newcommand{\oy}{\overline{y}}
\newcommand{\Ad}{\operatorname{Ad}}
\newcommand{\om}{\omega}
\newcommand{\cG}{\mathcal{G}}
\newcommand{\vphi}{\varphi}
\newcommand{\Stab}{\operatorname{Stab}}
\newcommand{\cD}{\mathcal{D}}
\newcommand{\GL}{\operatorname{GL}}
\newcommand{\Z}{\mathbb{Z}}
\newcommand{\cU}{\mathcal{U}}
\newcommand{\SL}{\operatorname{SL}}
\newcommand{\Q}{\mathbb{Q}}
\newcommand{\be}{\beta}
\newcommand{\id}{\operatorname{id}}
\newcommand{\C}{\mathbb{C}}
\newcommand{\cF}{\mathcal{F}}
\newcommand{\eps}{\varepsilon}
\newcommand{\cC}{\mathcal{C}}
\newcommand{\cL}{\mathcal{L}}
\newcommand{\M}{\operatorname{M}}
\newcommand{\ot}{\otimes}
\newcommand{\N}{\mathbb{N}}
\newcommand{\B}{\operatorname{B}}
\newcommand{\lspan}{\operatorname{span}}
\newcommand{\Tr}{\operatorname{Tr}}
\newcommand{\R}{\mathbb{R}}
\newcommand{\Comm}{\operatorname{Comm}}
\newcommand{\Fix}{\operatorname{Fix}}
\newcommand{\Norm}{\operatorname{Norm}}
\newcommand{\dpr}{^{\prime\prime}}
\newcommand{\QN}{\operatorname{QN}}
\newcommand{\vtil}{\widetilde{v}}
\newcommand{\psitil}{\widetilde{\psi}}
\newcommand{\Ntil}{\widetilde{N}}
\newcommand{\Hrep}{H_{\text{\rm rep}}}
\newcommand{\Hiso}{H_{\text{\rm iso}}}
\newcommand{\Hincl}{H_{\text{\rm incl}}}
\newcommand{\Hred}{H_{\text{\rm red}}}
\newcommand{\D}{\operatorname{D}}
\newcommand{\cN}{\mathcal{N}}
\newcommand{\pS}{\operatorname{S}}
\newcommand{\PcU}{\mathalpha{\operatorname{P}\!\mathcal{U}}}
\newcommand{\FAlg}{\operatorname{FAlg}}
\newcommand{\ibar}{\overline{i}}
\newcommand{\rP}{\operatorname{P}}
\newcommand{\Ytil}{\widetilde{Y}}
\newcommand{\Lambdatil}{\widetilde{\Lambda}}
\newcommand{\rig}[1]{{\Gamma \! _{#1}}}
\newcommand{\lef}[1]{ { _{#1}\! \Gamma}}
\newcommand{\diag}{\operatorname{diag}}
\newcommand{\Hecke}{\mathcal{H}}
\newcommand{\Perm}{\operatorname{Perm}}
\newcommand{\He}{\mathcal{H}_\text{\rm rep}}
\newcommand{\HeC}{\mathcal{H}_{\text{\rm rep}}^\C}
\newcommand{\Repfin}{\operatorname{Rep}_{\text{\rm fin}}}
\newcommand{\rtimesfull}{\rtimes_\text{\rm f}}
\newcommand{\cstarmax}{C^*_\text{\rm max}}
\newcommand{\cA}{\mathcal{A}}
\newcommand{\ovt}{\overline{\otimes}}
\newcommand{\cJ}{\mathcal{J}}
\newcommand{\notembed}[1]{\underset{#1}{\not\prec}}
\newcommand{\embed}[1]{\underset{#1}{\prec}}
\newcommand{\fembed}[1]{\overset{\text{\rm f}}{\underset{#1}{\prec}}}
\newcommand{\fembedr}[1]{\overset{\text{\rm f}}{\underset{#1}{\succ}}}
\begin{document}
\begin{center}
{\LARGE\bf Explicit computations of all finite index \vspace{2mm}\\ bimodules for a family of II$_1$ factors}

\bigskip

{\sc by Stefaan Vaes\footnote{Partially supported by Research Programme G.0231.07 of the Research Foundation -- Flanders (FWO) and the Marie Curie Research Training Network Non-Commutative Geometry MRTN-CT-2006-031962.}}
%
%
\end{center}

{Department of Mathematics, K.U.Leuven, Celestijnenlaan 200B, B-3001 Leuven, Belgium \\
E-mail: stefaan.vaes@wis.kuleuven.be}

\begin{abstract}
\noindent We study II$_1$ factors $M$ and $N$ associated with good generalized Bernoulli actions of groups having an infinite almost normal subgroup with the relative property~(T). We prove the following rigidity result~: every finite index $M$-$N$-bimodule (in particular, every isomorphism between $M$ and $N$) is described by a commensurability of the groups involved and a commensurability of their actions. The fusion algebra of finite index $M$-$M$-bimodules is identified with an extended Hecke fusion algebra, providing the first explicit computations of the fusion algebra of a II$_1$ factor. We obtain in particular explicit examples of II$_1$ factors with trivial fusion algebra, i.e.\ only having trivial finite index subfactors.
\end{abstract}

\section*{Introduction}

To every probability measure preserving action $\Gamma \actson (X,\mu)$ of a countable group, is associated a tracial von Neumann algebra $\rL^\infty(X) \rtimes \Gamma$, through the group measure space construction of Murray and von Neumann \cite{MvN}. In the passage from group actions to von Neumann algebras, a lot of information gets lost. Indeed, by the celebrated results of Connes, Feldman, Ornstein and Weiss \cite{CFW,OW}, all free ergodic actions of amenable groups (and even all amenable type II$_1$ equivalence relations) systematically yield the same von Neumann algebra, called the hyperfinite II$_1$ factor.

Recently, Sorin Popa, in his breakthrough articles \cite{P1,P2}, proved a completely opposite rigidity result~: for the first time, he was able to provide a family of group actions such that isomorphism of the crossed product von Neumann algebras, implies isomorphism of the groups involved and conjugacy of their actions. More precisely, Popa proves in \cite{P2} the following von Neumann strong rigidity theorem~: let $\Gamma \actson (X,\mu)$ be a free ergodic action of an ICC \emph{$w$-rigid group,} i.e.\ a group admitting an infinite normal subgroup with the relative property~(T), and let $\Lambda \actson (Y_0,\eta_0)^\Lambda$ be a Bernoulli action of an ICC group $\Lambda$. If the corresponding group measure space II$_1$ factors are isomorphic, then the groups $\Gamma$ and $\Lambda$ are isomorphic and their actions conjugate.

The crucial idea of Popa is the \emph{deformation/rigidity principle}. One studies von Neumann algebras that exhibit both a deformation property (e.g.\ a specific flow of automorphisms, or a sequence of completely positive unital maps tending to the identity) and a rigidity property (e.g.\ a subalgebra with the relative property~(T)). The tension between both properties determines in a sense the position of the rigid part and allows in certain cases to completely unravel the structure of the studied von Neumann algebra. The deformation/rigidity principle has been successfully applied in a lot of articles. Without being complete, we cite \cite{IPP,P0,P1,P2,PBetti,PV,FV,houdayer,Ioana} and we explain some aspects of these works below. We also refer to \cite{VBour} for a survey of some of these results.

The deformation/rigidity principle allows in particular to compute \emph{invariants of II$_1$ factors.} In \cite{PBetti}, Popa proved that the group von Neumann algebra $\cL(\SL(2,\Z) \ltimes \Z^2)$ has \emph{trivial fundamental group.} This was the first such example, answering a question of Kadison that remained open since 1967. Here, it should be noticed that Connes proved in \cite{C1} that the fundamental group of the group von Neumann algebra $\cL(\Gamma)$ is countable whenever $\Gamma$ is a group with property (T) and infinite conjugacy classes (ICC).

In \cite{P1,P2}, Popa made a thorough study of \emph{Bernoulli actions} $\Gamma \actson (X_0,\mu_0)^\Gamma$ and their non-commutative versions, called Connes-St{\o}rmer Bernoulli actions. In \cite{P1}, this lead to the first constructions of II$_1$ factors with a \emph{prescribed countable subgroup of $\R^*_+$ as fundamental group.} Alternative constructions have been given since then in \cite{IPP,houdayer}. In \cite{P2}, Popa proves the von Neumann strong rigidity theorem stated in the first paragraph. As an application, he gets the following description of the \emph{outer automorphism group} of the associated II$_1$ factors. Given the Bernoulli action $\Gamma \actson (X,\mu) = (X_0,\mu_0)^\Gamma$ of an ICC $w$-rigid group $\Gamma$, the outer automorphism group of the associated II$_1$ factor is the semidirect product of the group of characters of $\Gamma$ and the normalizer of $\Gamma$ inside $\Aut(X,\mu)$. Up to now, the actual computation of this normalizer remains an open problem though.

In \cite{IPP}, the deformation/rigidity principle was applied to study \emph{amalgamated free product II$_1$ factors.} From the many far-reaching results obtained in \cite{IPP}, we quote the existence theorem of II$_1$ factors $M$ with $\Out(M)$ a prescribed abelian compact group. In particular, it was shown that the outer automorphism group of a II$_1$ factor can be trivial, answering a question posed by Connes in 1973. Using the same techniques, it was shown in \cite{FV}, that there exist II$_1$ factors with $\Out(M)$ an arbitrary compact group.

Some of the results on Bernoulli actions obtained in \cite{P1,P2}, were extended by Popa and the author \cite{PV}, to include \emph{generalized Bernoulli actions} $\Gamma \actson (X_0,\mu_0)^I$, associated with an action of $\Gamma$ on a countable set $I$. As a result, the first \emph{explicit examples} of II$_1$ factors with \emph{trivial outer automorphism group} were given. It should be noticed that the shift from plain to generalized Bernoulli actions is not only technical in nature~: the former are \emph{mixing} and this is extensively used in \cite{P1,P2}, while the latter are only \emph{weakly mixing}.

\emph{In the current article,} we study \emph{bimodules (Connes' correspondences) of finite Jones index} between II$_1$ factors given by generalized Bernoulli actions. Bimodules between von Neumann algebras were studied by Connes (see V.Appendix~B of \cite{CBook}) and Popa \cite{PCorr}. An $M$-$N$-bimodule of finite Jones index (see \cite{Jones}) can be considered as a \emph{commensuration of $M$ and $N$}, i.e.\ an isomorphism modulo finite index. Using the Connes tensor product, the set $\FAlg(M)$ of (equivalence classes of) finite index $M$-$M$-bimodules carries the structure of a \emph{fusion algebra} and contains the outer automorphism group $\Out(M)$ as group-like elements.

As a natural follow-up of \cite{P1,P2,PV}, we provide a family of good generalized Bernoulli actions of groups admitting an infinite almost normal subgroup with the relative property~(T) and prove the following rigidity property~:
any finite index bimodule between the associated II$_1$ factors comes from a commensurability of the groups and a commensurability of the actions. This allows us to get the following results.
\begin{itemize}
\item We provide the first \emph{explicit computations of fusion algebras} for a family of II$_1$ factors. If the II$_1$ factor $M$ is defined by a good generalized Bernoulli action $\Gamma \actson (X,\mu) = (X_0,\mu_0)^I$, the fusion algebra $\FAlg(M)$ is identified with the \emph{extended Hecke fusion algebra} $\He(\Gamma < G)$ of the Hecke pair $\Gamma < G$, where $G$ denotes the commensurator of $\Gamma$ inside the group of permutations $\Perm(I)$. Loosely speaking, the extended Hecke algebra $\He(\Gamma < G)$ is an extension of the usual Hecke algebra $\Hecke(\Gamma < G)$ by the fusion algebra of finite dimensional unitary representations of $\Gamma$. In \ref{ex.hecke} below, we give several concrete examples yielding II$_1$ factors whose fusion algebras are the extended Hecke algebras of Hecke pairs appearing naturally in arithmetic.
\item We give the first \emph{explicit examples of II$_1$ factors $M$ with trivial fusion algebra,} associated with the generalized Bernoulli action $(\SL(2,\Q) \ltimes \Q^2) \actson (X_0,\mu_0)^{\Q^2}$ and a scalar $2$-cocycle. Equivalently, every finite index subfactor $N \subset M$ is trivial, i.e.\ isomorphic with $1 \ot N \subset \M_n(\C) \ot N$. Note that we proved in \cite{V1} the existence of such II$_1$ factors $M$, using the techniques of \cite{IPP}.
\item Compared to \cite{PV}, we impose less stringent conditions on the generalized Bernoulli actions involved and obtain more general results on outer automorphism groups. We prove that the actions $\PSL(n,\Z) \actson (X_0,\mu_0)^{\rP(\Q^n)}$ for $n$ odd and $n \geq 3$, provide II$_1$ factors with \emph{trivial outer automorphism group}. In fact, we provide a concrete construction procedure to obtain II$_1$ factors with a \emph{prescribed countable group} as an outer automorphism group. The case of groups of finite presentation was dealt with in \cite{PV}.
\end{itemize}

{\bf Acknowledgment.}$\;$ I would like to thank the referee for pointing out a gap in the original proof of Lemma \ref{lem.control}.

\section{Preliminaries and notations} \label{sec.prelim}

All von Neumann algebras in this article have separable predual and all Hilbert spaces are separable.

Let $M$ be a von Neumann algebra. One calls $H_M$ a \emph{(right) $M$-module} if $H$ is a Hilbert space equipped with a weakly continuous right module action of $M$. If $M$ is a II$_1$ factor and if we denote by $\rL^2(M)$ the Hilbert space obtained by the GNS construction with respect to the unique tracial state of $M$, every $M$-module $H_M$ is isomorphic with an $M$-module of the form $p (\ell^2(\N) \ot \rL^2(M))$, for some projection $p \in \B(\ell^2(M)) \ovt M$. The projection $p$ is uniquely determined up to equivalence of projections and one defines $\dim(H_M) := (\Tr \ot \tau)(p)$. All this was already known to Murray and von Neumann (Theorem~X in \cite{MvN}).

The \emph{Jones index} of a subfactor $N \subset M$ of a II$_1$ factor is defined as $[M : N] := \dim(\rL^2(M)_N)$, see \cite{Jones}. If $[M : N] < \infty$, we call $N \subset M$ a \emph{finite index subfactor} or a \emph{finite index inclusion.}

If $(M,\tau)$ is a tracial von Neumann algebra with possibly non-trivial center, the dimension $\dim(H_M)$ of a right $M$-module $H_M$ is defined similarly, but depends on the choice of trace $\tau$. In this article, there will always be an obvious choice of $\tau$, so that we freely use the notation $\dim(H_M)$.

For any von Neumann algebra $M$, we denote $M^n := \M_n(\C) \ot M$ and $M^\infty := \B(\ell^2(\N)) \ovt M$.

Let $N$ and $M$ be von Neumann algebras. An \emph{$N$-$M$-bimodule $_N H_M$} is a Hilbert space $H$ equipped with commuting, weakly continuous, left $N$-module and right $M$-module actions. An $N$-$M$-bimodule $_N H_M$ between tracial von Neumann algebras $(N,\tau_1)$ and $(M,\tau_2)$ is said to be \emph{of finite Jones index} if $\dim(_N H) < \infty$ and $\dim(H_M) < \infty$. Bimodules between von Neumann algebras were studied by Connes (see V.Appendix~B in \cite{CBook}) who called them \emph{correspondences}, and by Popa \cite{PCorr}.

If $M$ is a II$_1$ factor, $\FAlg(M)$ is defined as the set of equivalence classes of finite index $M$-$M$-bimodules. We call $\FAlg(M)$ the \emph{fusion algebra of the II$_1$ factor $M$.}

First recall that an \emph{abstract fusion algebra $\cA$} is a free $\N$-module $\N[\cG]$ equipped with the following additional structure~:
\begin{itemize}
\item an associative and distributive product operation, and a multiplicative unit element $e \in \cG$,
\item an additive, anti-multiplicative, involutive map $x \mapsto \overline{x}$, called \emph{conjugation},
\end{itemize}
satisfying Frobenius reciprocity: defining the numbers $m(x,y;z) \in \N$ for $x,y,z \in \cG$ through the formula
$$x y = \sum_z m(x,y;z) z$$
one has $m(x,y;z) = m(\ox,z;y) = m(z,\oy;x)$ for all $x,y,z \in \cG$. The base $\cG$ of the fusion algebra $\cA$ is canonically determined~: these are exactly the non-zero elements of $\cA$ that cannot be expressed as the sum of two non-zero elements. The elements of $\cG$ are called the \emph{irreducible elements} of the fusion algebra $\cA$.

If $M$ is a II$_1$ factor, the fusion algebra structure on $\FAlg(M)$ is given by the direct sum and the \emph{Connes tensor product} of $M$-$M$-bimodules. Whenever $\psi : M \recht p M^n p$ is a finite index inclusion, define the $M$-$M$-bimodule $H(\psi)$ on the Hilbert space $p (\M_{n,1}(\C) \ot \rL^2(M))$ with left and right module action given by $a \cdot \xi = \psi(a) \xi$ and $\xi \cdot a = \xi a$. Every element of $\FAlg(M)$ is of the form $H(\psi)$ for a finite index inclusion $\psi$ uniquely determined up to conjugacy. The Connes tensor product of $H(\psi)$ and $H(\rho)$ is given by $H(\psi) \ot_M H(\rho) \cong H((\id \ot \rho)\psi)$.

We say that $M$ and $N$ are \emph{commensurable II$_1$ factors} if there exists a non-zero finite index $N$-$M$-bimodule.

Throughout this article, $\Gamma \actson (X,\mu)$ denotes a \emph{probability measure preserving action of a countable group $\Gamma$ on the standard probability space $(X,\mu)$.} We will always write $\Gamma$ acting on the right on $X$. Associated to $\Gamma \actson (X,\mu)$ is the so-called group measure space, or crossed product, von Neumann algebra $M=\rL^\infty(X) \rtimes \Gamma$. As a tracial von Neumann algebra, $(M,\tau)$ is uniquely characterized by the following properties~:
\begin{itemize}
\item $M$ contains a copy of $\rL^\infty(X)$ and a copy of $\Gamma$ as unitaries $(u_g)_{g \in \Gamma}$ satisfying $u_g u_h = u_{gh}$ for all $g,h \in \Gamma$,
\item $u_g F(\cdot) u_g^* = F(\; \cdot \; g)$ for all $F \in \rL^\infty(X)$ and $g \in \Gamma$,
\item $\tau(F u_g) = 0$ when $g \neq e$ and $\tau(F) = \int F \; d\mu$ for all $F \in \rL^\infty(X)$.
\end{itemize}
If $\Gamma \actson (X,\mu)$ and if $\Omega$ is a \emph{scalar $2$-cocycle} on $\Gamma$, the \emph{cocycle crossed product} $\rL^\infty(X) \rtimes_\Omega \Gamma$ is defined entirely similarly, the only difference being the relation $u_g u_h = \Omega(g,h) u_{gh}$. Note that the $2$-cocycle relation that we impose on $\Omega$ is exactly the one that makes this last product associative.

Let $\Gamma \actson (X,\mu)$ and denote by $(\si_g)$ the associated group of automorphisms of $\rL^\infty(X)$ given by $\si_g(F(\cdot)) = F(\; \cdot \; g)$. Then, $\Gamma \actson (X,\mu)$ is called
\begin{itemize}
\item \emph{mixing}, if $\lim_{g \recht \infty} \tau(\si_g(a) b) = \tau(a) \tau(b)$ for all $a, b \in \rL^\infty(X)$~;
\item \emph{weakly mixing}, if there exists a sequence $(g_n) \in \Gamma$ such that $\lim_{n \recht \infty} \tau(\si_{g_n}(a) b) = \tau(a) \tau(b)$ for all $a, b \in \rL^\infty(X)$.
\end{itemize}
The following three properties of a probability measure preserving action $\Gamma \actson (X,\mu)$ are equivalent~: weak mixing; the Hilbert space $\rL^2(X) \ominus \C 1$ does not have finite dimensional $\Gamma$-invariant subspaces; the diagonal action $\Gamma \actson X \times X$ is ergodic. We refer e.g.\ to Appendix~D of \cite{VBour} for proofs.

A group $\Gamma$ is said to have \emph{infinite conjugacy classes (ICC)} if $\{g h g^{-1} \mid g \in \Gamma\}$ is infinite for every $h \neq e$. More generally, we say that a subgroup $\Gamma_0 < \Gamma$ has the \emph{relative ICC property} if $\{g h g^{-1} \mid g \in \Gamma_0\}$ is infinite for all $h \in \Gamma - \{e\}$.

If $M$ is a von Neumann algebra with von Neumann subalgebra $A \subset M$, we define the subset $\QN_M(A)$ as the set of $x \in M$ such that there exist $x_1,\ldots,x_n,y_1,\ldots,y_m \in M$ satisfying
$$x A \subset \sum_{i=1}^n A x_i \quad\text{and}\quad A x \subset \sum_{i=1}^m y_i A \; .$$
Note that $\QN_M(A)$ is a unital $^*$-subalgebra of $M$ containing $A$. The generated von Neumann algebra $\QN_M(A)\dpr$ is called the \emph{quasi-normalizer} of $A$ inside $M$. If $\QN_M(A)\dpr = M$, we say that the inclusion $A \subset M$ is \emph{quasi-regular.}

If $(X_0,\mu_0)$ is a probability space and if $I$ is a countable set, we denote by $(X_0,\mu_0)^I$ the \emph{infinite product probability space.} Whenever $J \subset I$ or $i \in I$, we consider the obvious von Neumann subalgebras $\rL^\infty(X_0^J)$ and $\rL^\infty(X_0^i)$ of $\rL^\infty(X_0^I)$.

\section{Main results} \label{sec.main-results}

We gather the main results of the article in this section. All the proofs, including a more detailed discussion of the given examples, are provided in Section \ref{sec.proofs}, based of course on the work done in Sections \ref{sec.intertwining}-\ref{sec.partB}.

We make a detailed study of the II$_1$ factors given by generalized Bernoulli actions. These actions are defined as follows.

\begin{definition}
Let $(X_0,\mu_0)$ be any standard probability space.
If the countable group $\Gamma$ acts on the countable set $I$, we call the action $\Gamma \actson (X_0,\mu_0)^I$ a \emph{generalized Bernoulli action}. We call $(X_0,\mu_0)$ the \emph{base space} of the generalized Bernoulli action and we note that it is allowed to be atomic. But we assume of course that $\mu_0$ is not concentrated on one atom.
\end{definition}

The following is the main theorem of the article~: we describe entirely explicitly all finite index bimodules between the II$_1$ factors $N$ and $M$ coming from \lq good\rq\ generalized Bernoulli actions of \lq good\rq\ groups. These kind of good actions are introduced in Definitions \ref{def.conditions} and \ref{def.goodgood} below.

\begin{theorem} \label{thm.main}
Let $\Lambda \actson J$ and $\Gamma \actson I$ be good actions of good groups (see Def.\ \ref{def.goodgood}). Take scalar $2$-cocycles $\omega \in Z^2(\Lambda,S^1)$ and $\Omega \in Z^2(\Gamma,S^1)$. Consider the generalized Bernoulli actions $\Lambda \actson (Y,\eta)=(Y_0,\eta_0)^J$ and $\Gamma \actson (X,\mu) = (X_0,\mu_0)^I$. Define the II$_1$ factors
$$N = \rL^\infty(Y) \rtimes_\omega \Lambda \quad\text{and}\quad \rL^\infty(X) \rtimes_\Omega \Gamma \; .$$
Suppose that $_N H_M$ is a finite index $N$-$M$-bimodule. Then, the following holds.
\begin{itemize}
\item The actions $\Lambda \actson J$ and $\Gamma \actson I$ are commensurable~: there exists a bijection $\Delta : J \recht I$ and an isomorphism $\delta : \Lambda_1 \recht \Gamma_1$ between finite index subgroups of $\Lambda$, resp.\ $\Gamma$, satisfying $\Delta(g \cdot j) = \delta(g) \cdot \Delta(j)$ for all $j \in J$, $g \in \Lambda_1$.
\item The probability spaces $(Y_0,\eta_0)$ and $(X_0,\mu_0)$ are isomorphic.
\item There exists a finite dimensional projective representation $\pi$ of $\Lambda_1$ such that $\Omega_\pi (\Omega \circ \delta) = \omega$ on $\Lambda_1$.
\end{itemize}
Moreover, the $N$-$M$-bimodule $_N H_M$ can be entirely described in terms of the above data. We refer to Thm.\ \ref{thm.cartan-preserving-bimodules} and Prop.\ \ref{prop.commensuration-bernoulli} for a precise statement.
\end{theorem}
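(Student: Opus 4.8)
The plan is to use Popa's deformation/rigidity strategy, following the scheme of \cite{P1,P2,PV} but adapted to the bimodule (rather than isomorphism) setting, which forces us to work with finite index inclusions $\psi : N \recht p M^n p$ rather than $*$-isomorphisms. First, a finite index $N$-$M$-bimodule $_NH_M$ is encoded by such an inclusion $\psi$, and the core of the argument is to control the position of $\psi(\rL^\infty(Y) \rtimes_\omega \Lambda_0)$ inside $M^n$, where $\Lambda_0 < \Lambda$ is the infinite almost normal subgroup with the relative property~(T) supplied by the ``good group'' hypothesis. The relative property~(T) of $\Lambda_0 < \Lambda$ passes to the subalgebra $\rL^\infty(Y)\rtimes_\omega\Lambda_0 \subset N$, hence to its image in $M^n$. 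On the other side, the generalized Bernoulli action gives $M$ a malleable deformation in the sense of Popa: the Gaussian-type / Connes--St\o rmer path $(\alpha_t)$ of automorphisms of $\Mtil = \rL^\infty(\widetilde X)\rtimes_\Omega\Gamma$ with $\Mtil \supset M$. The rigid subalgebra is moved only a little by $\alpha_t$, and a transversality / convexity argument (as in \cite{P1}, using that the deformation commutes with $\rL(\Gamma)$ suitably) together with Popa's intertwining-by-bimodules criterion (Section \ref{sec.intertwining}) yields that $\psi(\rL(\Lambda_0)) \embed{M} \rL^\infty(X^F)\rtimes_\Omega\Gamma$ for some finite $F\subset I$, and then, using the relative ICC and weak mixing built into the ``good action'' hypotheses, that in fact $\psi(\rL(\Lambda_0))\embed{M}\rL(\Gamma)$.

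The second block is to upgrade this intertwining of the rigid parts to an intertwining of the full group algebras and then of the Cartan subalgebras. Here I would quasi-normalizer: since $\Lambda_0 < \Lambda$ is almost normal, $\rL(\Lambda)$ is in the quasi-normalizer of $\rL(\Lambda_0)$ inside $N$, and likewise $\rL^\infty(Y)\rtimes_\omega\Lambda \subset \QN_N(\rL(\Lambda_0))\dpr$; the generalized Bernoulli action being weakly mixing forces the quasi-normalizer of (a corner of) $\rL(\Gamma)$ inside $M$ to stay inside $\rL(\Gamma)$ up to the Cartan piece. Combining these, one gets a partial isometry $v\in M^{n}$ with $v^*v\in\psi(N)'\cap\ldots$, $vv^*\in\ldots$ intertwining $\psi(\rL(\Lambda))$ into $\rL(\Gamma)$ and $\psi(\rL^\infty(Y))$ into $\rL^\infty(X)$; conjugating $\psi$ by $v$ we may assume $\psi$ is ``Cartan preserving'', i.e.\ $\psi(\rL^\infty(Y))\subset \rL^\infty(X)\ot\M_m(\C)$ compatibly. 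At this point one invokes Thm.\ \ref{thm.cartan-preserving-bimodules}, which classifies such Cartan-preserving finite index bimodules: they come from a (stable) orbit equivalence, i.e.\ from a measure space isomorphism intertwining the equivalence relations up to finite index, plus a scalar $2$-cocycle datum.

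The final block is the rigidity of generalized Bernoulli actions under (stable) orbit equivalence, which is Prop.\ \ref{prop.commensuration-bernoulli}: a local orbit equivalence between $\Lambda\actson(Y_0,\eta_0)^J$ and $\Gamma\actson(X_0,\mu_0)^I$ must send, up to measure zero and up to finite index corrections, the tensor-factor algebras $\rL^\infty(Y_0^j)$ onto the $\rL^\infty(X_0^i)$. This is proved by the standard Bernoulli-rigidity mechanism: the relative weak mixing of $\rL^\infty(Y_0^{J\setminus\{j\}})\rtimes\Lambda$ inside $N$ and the quasi-regularity of $\rL^\infty(Y_0^j)$, matched against the analogous structure on the $M$ side, pin down a bijection $\Delta: J\recht I$; the induced identification of stabilizers yields the isomorphism $\delta:\Lambda_1\recht\Gamma_1$ of finite index subgroups with $\Delta(g\cdot j)=\delta(g)\cdot\Delta(j)$; comparing the two base spaces over a matched pair of coordinates gives $(Y_0,\eta_0)\cong(X_0,\mu_0)$; and tracking the unitaries $(u_g)$ through all the identifications produces the finite dimensional projective representation $\pi$ of $\Lambda_1$ with $\Omega_\pi(\Omega\circ\delta)=\omega$, the representation $\pi$ accounting precisely for the matrix amplification $\M_m(\C)$ introduced when we passed to a Cartan-preserving $\psi$. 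The main obstacle is the first block: getting the rigid subalgebra $\psi(\rL(\Lambda_0))$ into a controlled position inside $M^n$ when we only have an almost normal (not normal) subgroup and only weak (not genuine) mixing of the action, so that the clean arguments of \cite{P2} do not apply directly and one must run the deformation/rigidity + intertwining machinery with considerably more care — this is exactly the technical heart developed in Sections \ref{sec.intertwining}--\ref{sec.partB}, including the delicate Lemma \ref{lem.control}.
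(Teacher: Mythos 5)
Your proposal follows essentially the same route as the paper: deformation/rigidity on the image of the rigid group algebra $\cL(\Lambda_0)$ (Lemma \ref{lem.intertwine-rel-T} and Step~\ref{step-1} of Section~\ref{sec.partA}), followed by quasi-normalizer control (Lemma \ref{lem.control}, Theorem \ref{thm.intertwine-finite-index} and Steps~\ref{step-2}--\ref{step-4}) to show the bimodule intertwines the Cartan subalgebras, and then Theorem \ref{thm.cartan-preserving-bimodules} (cocycle superrigidity) together with Proposition \ref{prop.commensuration-bernoulli} to unpack the elementary bimodule into the commensuration data. The plan correctly identifies the heart of the difficulty — working with only almost normal rigid subgroups and only weakly mixing actions — and the minor imprecisions (calling the malleability step a transversality argument rather than the $t\mapsto 2t$ doubling trick; implying a single partial isometry conjugates both $\cL(\Lambda)$ into $\cL(\Gamma)$ and $\rL^\infty(Y)$ into $\rL^\infty(X)$, whereas the paper only needs $\rL^\infty(Y)\embed{H}\rL^\infty(X)$ before invoking Lemma \ref{lem.cartan}) do not change the overall architecture.
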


The following consequences will be deduced from Theorem \ref{thm.main}.

\begin{itemize}
\item Corollary \ref{cor.example} provides the first explicit example of a II$_1$ factor $M$ without non-trivial finite index bimodules. Equivalently, every finite index subfactor $N \subset M$ is isomorphic with the trivial subfactor $1 \ot N \subset \M_n(\C) \ot N$. The existence of such II$_1$ factors had been shown before by the author in \cite{V1}.

\item When $M = \rL^\infty\bigl((X_0,\mu_0)^I\bigr) \rtimes \Gamma$ is as above and $(X_0,\mu_0)$ is atomic with unequal weights, the \emph{fusion algebra} of finite index $M$-$M$-bimodules, can be identified with the \emph{extended Hecke fusion algebra} of the Hecke pair given by $\Gamma < \Comm_{\Perm(I)}(\Gamma)$~: see Theorem \ref{thm.fusion-algebra}. Here $\Comm_{\Perm(I)}(\Gamma)$ denotes the commensurator of $\Gamma$ inside the group $\Perm(I)$ of permutations of $I$, see Definition \ref{def.conditions}. This provides the first explicit computations of the fusion algebra for a family of II$_1$ factors.

\item In \ref{ex.hecke}, we provide several examples, yielding concrete II$_1$ factors whose fusion algebras are given by the extended Hecke fusion algebra of Hecke pairs like $\SL(n,\Z) < \GL(n,\Q)$ or \linebreak $(R^* \ltimes R) < (\Q^* \ltimes \Q)$ where $\Z \subset R \subset \Q$ is a subring sitting strictly between $\Z$ and $\Q$.

\item The \emph{outer automorphism group} $\Out(M)$ can be explicitly computed for the generalized Bernoulli II$_1$ factors above, see Corollary \ref{cor.out}. In \ref{ex.out} this yields rather easy II$_1$ factors without outer automorphisms.

\item Every countable group arises as the outer automorphism group of a II$_1$ factor.
\end{itemize}

We introduce the necessary properties of actions and groups in the following two definitions.

\begin{definition} \label{def.conditions}
Let $\Gamma \actson I$ be an action of the group $\Gamma$ on the set $I$.
\begin{itemize}
\item We say that $J \subset I$ has \emph{infinite index} if $I \neq \bigcup_{i=1}^n g_i J$ for all $n \in \N$ and all $g_1,\ldots,g_n \in \Gamma$.
\end{itemize}
We consider the following properties of $\Gamma \actson I$.
\begin{itemize}
\item[(C1)] The set $I$ is infinite, the action $\Gamma \actson I$ is transitive and $\Stab i \actson I \setminus \{i\}$ has infinite orbits for one (equivalently all) $i \in I$.
\item[(C2)] The \emph{minimal condition on stabilizers :} there is no infinite sequence $(i_n)$ in $I$ such that $\Stab\{i_0,\ldots,i_n\}$ is strictly decreasing.
\item[(C3)] A \emph{faithfulness condition :} for every $g \in \Gamma$ with $g \neq e$, the subset $\Fix g \subset I$ has infinite index in the sense defined above.
\end{itemize}
Let $\Gamma$ be a group and $\Gamma_0 < \Gamma$ a subgroup.
\begin{itemize}
\item The \emph{commensurator of $\Gamma_0$ inside $\Gamma$} is defined as
$$\Comm_\Gamma(\Gamma_0) := \{g \in \Gamma \mid g \Gamma_0 g^{-1} \cap \Gamma_0 \;\;\text{has finite index in both}\;\; g \Gamma_0 g^{-1} \;\;\text{and}\;\; \Gamma_0 \} \; .$$
\item The subgroup $\Gamma_0 < \Gamma$ is called \emph{almost normal} if $\Comm_\Gamma(\Gamma_0) = \Gamma$. Under this condition, one also calls $\Gamma_0 < \Gamma$ a \emph{Hecke pair}.
\end{itemize}
\end{definition}

\begin{definition} \label{def.goodgood}
We say that $\Gamma \actson I$ is a \emph{good action of a good group} if $\Gamma$ is a group admitting an infinite almost normal subgroup with the relative property (T) and if the action $\Gamma \actson I$ satisfies conditions (C1), (C2) and (C3).
\end{definition}

We immediately illustrate that there are indeed plenty of examples and constructions for good actions of good groups.

\begin{examples} \label{ex.conditions}
In Definition \ref{def.conditions} above, it is of course condition (C2) that is the least intuitive. The following principles allow to construct many examples of actions satisfying (C2). As for all other results in this section, proofs are given in Section \ref{sec.proofs}.
\begin{itemize}
\item Let $V$ be a finite-dimensional vector space. Then, the actions $(\GL(V) \ltimes V) \actson V$ and $\PGL(V) \actson \rP(V)$ satisfy condition (C2).
\item Suppose that $\Gamma \actson I$ satisfies (C2) and take $I_1 \subset I$ as well as $\Gamma_1 < \Gamma$. If $I_1$ is globally $\Gamma_1$-invariant, then $\Gamma_1 \actson I_1$ satisfies (C2).
\item If both $\Gamma \actson I$ and $\Lambda \actson J$ satisfy (C2), the same is true for $(\Gamma \times \Lambda) \actson (I \times J)$.
\item Let $\Gamma$ be a group. Then, the left-right action $(\Gamma \times \Gamma) \actson \Gamma$ satisfies (C2) if and only if $\Gamma$ satisfies \emph{the minimal condition on centralizers :} there is no infinite sequence $(g_n)$ in $\Gamma$ such that $C_\Gamma(g_1,\ldots,g_n)$ is a strictly decreasing sequence of subgroups of $\Gamma$. The minimal condition on centralizers has been studied quite extensively in group theory, see e.g.\ \cite{bryant}. The following families of groups satisfy the minimal condition on centralizers~: linear groups, $C'(1/6)$-small cancelation groups, word hyperbolic groups.
\end{itemize}
Note also that for the left-right action $(\Gamma \times \Gamma) \actson \Gamma$, the following three properties are equivalent~: condition (C1), condition (C3) and the ICC property of $\Gamma$.

As a result, we list the following concrete examples of good actions of good groups.
\begin{itemize}
\item $\PSL(n,\Z) \actson \rP(\Q^n)$ and $\PSL(n,\Q) \actson \rP(\Q^n)$ for $n \geq 3$.
\item $\SL(n,\Z) \ltimes \Z^n$ acting on $\Z^n$ and $\SL(n,\Q) \ltimes \Q^n$ acting on $\Q^n$ for $n \geq 2$.
\item $(\PSL(n,\Z) \times \Gamma \times \Gamma) \actson (\rP(\Q^n) \times \Gamma)$ where $n \geq 3$ and $\Gamma$ is an arbitrary ICC group satisfying the minimal condition on centralizers.
\end{itemize}
\end{examples}

\begin{remark} \label{rem.weak-mixing-relative-T}
Let $\Gamma \actson I$ satisfy (C1), (C2) and (C3). Whenever $H < \Gamma$ is infinite and almost normal, the restricted action $H \actson I$ has infinite orbits and so, $H \actson (X_0,\mu_0)^I$ is weakly mixing. Indeed, once $H_0 := H \cap \Stab i_0$ has finite index in $H$ for some $i_0 \in I$, one constructs by induction finite index subgroups $H_n$ of $H$ and a strictly decreasing sequence $\Stab(i_0,\ldots,i_n)$ containing $H_n$. This contradicts condition (C2).

Also, conditions (C1) and (C3) imply immediately that the generalized Bernoulli action $\Gamma \actson (X_0,\mu_0)^I$ is essentially free and ergodic.
\end{remark}

\subsection{Computations of all finite index bimodules of certain II$_1$ factors}

As announced above, Theorem \ref{thm.main} allows to entirely determine all finite index $M$-$M$-bimodules for certain II$_1$ factors $M$.

\begin{corollary} \label{cor.example}
Let $\Gamma = \SL(2,\Q) \ltimes \Q^2$ act on $\Q^2$ by affine transformations. Let $\al \in \R \setminus \{0\}$. Consider the scalar $2$-cocycle
$\Omega_\al \in Z^2(\Q^2)$ defined by $\Omega_\al((x,y),(x',y')) = \exp(2\pi i \al (xy' - y x'))$. Extend $\Omega_\al$ to the whole of $\Gamma$ by $\SL(2,\Q)$-invariance.

Consider the II$_1$ factors
$$M(\al,X_0,\mu_0) := \rL^\infty\bigl((X_0,\mu_0)^{\Q^2}\bigr) \rtimes_{\Omega_\al} \Gamma \; .$$
Then, the following holds.
\begin{itemize}
\item If $M = M(\al,X_0,\mu_0)$ for an atomic $\mu_0$ with unequal weights, every finite index $M$-$M$-bimodule is a multiple of the trivial $M$-$M$-bimodule $\rL^2(M)$.
\item The II$_1$ factors $M(\al,X_0,\mu_0)$ and $M(\beta,Y_0,\eta_0)$ are commensurable if and only if $\al = \be$ and $(X_0,\mu_0) \cong (Y_0,\eta_0)$.
\end{itemize}
\end{corollary}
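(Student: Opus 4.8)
The plan is to deduce Corollary \ref{cor.example} as a direct application of Theorem \ref{thm.main}, so the work splits into two parts: (i) verifying that the action $\Gamma = \SL(2,\Q) \ltimes \Q^2 \actson \Q^2$ is a good action of a good group, and (ii) translating the three commensurability conditions produced by Theorem \ref{thm.main} into the rigidity statement in the corollary. For part (i), the group $\SL(2,\Q) \ltimes \Q^2$ contains the infinite normal subgroup $\Q^2$, and by the classical result of Burger the pair $\SL(2,\Z) \ltimes \Z^2 < \SL(2,\Q) \ltimes \Q^2$ (or rather $\Q^2 < \SL(2,\Q)\ltimes\Q^2$, using relative property (T) of $\Z^2 < \SL(2,\Z)\ltimes\Z^2$ transported along the inclusion) gives an infinite almost normal subgroup with the relative property (T); in fact $\Q^2$ is normal, hence almost normal, and relative property (T) of $\Q^2 \subset \SL(2,\Q)\ltimes\Q^2$ follows from the $\SL(2,\cdot)$ version of Kazhdan's argument. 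Conditions (C1), (C2), (C3) for $(\GL(2,\Q) \ltimes \Q^2) \actson \Q^2$ are exactly the instance of the first bullet of Examples \ref{ex.conditions} with $V = \Q^2$; restricting to the $\SL$-subgroup, which is globally invariant on $\Q^2$, preserves (C2) by the second bullet, and (C1), (C3) are checked directly (transitivity of the affine group on $\Q^2$, infinite orbits of a point-stabilizer, and infinite index of $\Fix g$ for $g \neq e$ — for a nontrivial translation $\Fix g = \emptyset$, and for a linear part $\neq 1$ the fixed set is a proper affine subspace, which has infinite index).

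For part (ii), suppose $H$ is a non-zero finite index $M(\al,X_0,\mu_0)$-$M(\be,Y_0,\eta_0)$-bimodule. Theorem \ref{thm.main} (applied with $\Lambda = \Gamma' = \SL(2,\Q)\ltimes\Q^2$ acting on $J = \Q^2$, cocycle $\om = \Omega_\be$, and $\Gamma$ acting on $I = \Q^2$, cocycle $\Omega = \Omega_\al$) yields: a bijection $\Delta \colon \Q^2 \recht \Q^2$ and an isomorphism $\delta \colon \Lambda_1 \recht \Gamma_1$ of finite index subgroups with $\Delta(g \cdot j) = \delta(g)\cdot \Delta(j)$; an isomorphism $(Y_0,\eta_0) \cong (X_0,\mu_0)$ — which already gives half of the second bullet of the corollary; and a finite dimensional projective representation $\pi$ of $\Lambda_1$ with $\Omega_\pi \cdot (\Omega_\al \circ \delta) = \Omega_\be$ on $\Lambda_1$. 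It remains to extract $\al = \be$. The key point is that the commutator subgroup $[\Q^2,\Q^2]$ inside $\SL(2,\Q)\ltimes\Q^2$ vanishes, so $\Omega_\al$ restricted to $\Q^2 \times \Q^2$ is an honest bicharacter (the symplectic form scaled by $\al$), and its cohomology class in $H^2(\Q^2,S^1)$ is a genuine invariant: scaling the form by $\al$ versus $\be$ gives cohomologous cocycles on the abelian group $\Q^2$ iff $\al = \be$ (since $\Q^2$ is torsion-free and divisible, $H^2(\Q^2, S^1)$ with these symmetric-free antisymmetric cocycles is detected by the value of the form, and a projective representation $\pi$ of a finite-index subgroup contributes nothing because its associated $2$-cocycle $\Omega_\pi$ is a coboundary upon restriction to a finite-index — hence still divisible — torsion-free subgroup, or more precisely the class of $\Omega_\pi$ lies in the image of $H^2$ of a finite quotient and is killed on $\Q^2$). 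One must be careful that $\delta$ need not map the $\Q^2$-part of $\Lambda_1$ to the $\Q^2$-part of $\Gamma_1$; this is handled by noting that $\Lambda_1 \cap \Q^2$ is the unique maximal normal abelian (indeed, the Fitting) subgroup of $\Lambda_1$ up to finite index — or by observing that $\delta$ must carry the kernel of the action on the affine space to itself since $\Delta$ conjugates the two actions — so $\delta$ restricts to an isomorphism between finite index subgroups $\Q^2_1 < \Q^2$ (on each side), which moreover, since $\Delta$ intertwines the affine actions and $\Q^2$ acts by translations, must be of the form $v \mapsto A v$ for a fixed $A \in \GL(2,\Q)$ with $\det A$ encoding $\Delta$ on the translation part. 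Then $\Omega_\al(Av, Av') = \exp(2\pi i \al \det(A)(vv' - v'v))$ forces, together with the cohomological triviality of $\Omega_\pi$ on $\Q^2_1$, that $\al \det(A) \equiv \be$ in the appropriate sense — and since $\Delta$ is a bijection of $\Q^2$ intertwining the $\Q^2_1$-actions by translations, a separate (elementary) argument on the structure of $\Delta$ pins down $\det A$, or rather one argues directly that the relevant invariant is $\al$ itself modulo the action of such rescalings, which turns out to be rigid. The cleanest route is probably: $\Omega_\al|_{\Q^2}$ and $\Omega_\be|_{\Q^2}$ must be cohomologous after transport by $\delta$ and twisting by $\Omega_\pi$, and one shows the cohomology class of $\Omega_\al|_{\Q^2_1}$ inside $H^2(\Q^2_1, S^1)$ — which is canonically $\cong H^2(\Q^2, S^1) \cong S^1$ via $\al \mapsto \al$ after normalizing the lattice — is unchanged by finite-index restriction and by $\Omega_\pi$-twisting, whence $\al = \be$.

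The converse direction of the second bullet is easy: if $\al = \be$ and $(X_0,\mu_0) \cong (Y_0,\eta_0)$, an isomorphism of probability spaces induces, functorially on the product $(X_0,\mu_0)^{\Q^2}$, an isomorphism of the generalized Bernoulli actions commuting with $\Gamma$ and hence an isomorphism $M(\al,X_0,\mu_0) \cong M(\be,Y_0,\eta_0)$, which in particular is a finite index bimodule, so the two factors are commensurable.

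The first bullet of the corollary follows by taking $\Lambda \actson J$ equal to $\Gamma \actson I$ and $\om = \Omega = \Omega_\al$ (so automatically $\al = \be$, and trivially $(X_0,\mu_0) \cong (X_0,\mu_0)$): Theorem \ref{thm.main} says every finite index $M$-$M$-bimodule $_M H_M$ is described by a commensuration $(\Delta,\delta)$ of the action $\Gamma \actson \Q^2$ with itself together with a finite dimensional projective representation $\pi$ of $\Lambda_1$ satisfying $\Omega_\pi (\Omega_\al \circ \delta) = \Omega_\al$ on $\Lambda_1$. Invoking the precise description promised by Thm.\ \ref{thm.cartan-preserving-bimodules} and Prop.\ \ref{prop.commensuration-bernoulli}, such a bimodule is a multiple of $\rL^2(M)$ precisely when the only commensuration data that occur are the trivial ones; so the content is to show that the commensurator of $\Gamma = \SL(2,\Q)\ltimes\Q^2$ inside $\Perm(\Q^2)$ (equivalently, the group of self-commensurations of the action) is $\Gamma$ itself up to finite-index and finite-dimensional-representation ambiguity, and that every $\pi$ with $\Omega_\pi = \Omega_\al \cdot (\Omega_\al\circ\delta)^{-1}$ on a finite index subgroup is forced to be one-dimensional — i.e.\ the extended Hecke fusion algebra of the pair $\Gamma < \Comm_{\Perm(\Q^2)}(\Gamma)$ is trivial. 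The main obstacle is exactly this computation of $\Comm_{\Perm(\Q^2)}(\SL(2,\Q)\ltimes\Q^2)$: one must argue that any permutation of $\Q^2$ commensurating the affine $\SL(2,\Q)$-action is, after passing to a finite index subgroup, itself affine, and since $\SL(2,\Q)\ltimes\Q^2$ is its own commensurator inside the affine group $\GL(2,\Q)\ltimes\Q^2$ only up to the central/scaling ambiguity that is killed by the cocycle $\Omega_\al$ (different scalings change $\al$), there are no new bimodules; combined with the fact that the relevant Hecke algebra is then the group algebra of the trivial group and carries no nontrivial finite dimensional representations beyond characters of $\Gamma$, and $\SL(2,\Q)\ltimes\Q^2$ is perfect-enough (its abelianization is finite, indeed $\SL(2,\Q)$ is perfect and $\Q^2$ is killed in the semidirect product abelianization) to have no nontrivial characters, the fusion algebra collapses to $\{\rL^2(M)\}$. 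That last perfectness computation and the commensurator computation are where the real effort goes; everything else is bookkeeping over Theorem \ref{thm.main}.
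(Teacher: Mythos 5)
Your overall strategy is the same as the paper's: reduce everything to Theorem \ref{thm.cartan-preserving-bimodules}, Theorem \ref{thm.fusion-elementary} and Proposition \ref{prop.commensuration-bernoulli}, compute the commensurator of $\Gamma=\SL(2,\Q)\ltimes\Q^2$ inside $\Perm(\Q^2)$, and analyse the resulting cocycle constraint. But three essential steps are missing. First, the key group-theoretic input is von Neumann--Wigner minimal almost periodicity: $\SL(2,\Q)\ltimes\Q^2$ admits \emph{no} non-trivial finite dimensional unitary representations whatsoever (hence also no proper finite index subgroups, which removes all of your $\Gamma_1,\Lambda_1,\Q^2_1$ bookkeeping). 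You only dispose of \emph{characters}, via the triviality of the abelianization; that is not enough, because the extended Hecke fusion algebra contains $\Repfin(\Gamma)$ as a subalgebra even when the Hecke pair is trivial, and a perfect group can carry many higher-dimensional representations (e.g.\ $\SL(3,\Z)$ via its congruence quotients). Your assertion that the fusion algebra ``carries no nontrivial finite dimensional representations beyond characters'' is exactly what has to be proved, so the first bullet is not established. Second, the computation of $\Comm_{\Perm(\Q^2)}(\Gamma)$ --- that any permutation of $\Q^2$ commensurating the affine action is itself affine --- is the content of Lemma \ref{lem.compute-comm}; you correctly identify it as the main obstacle but do not carry it out, so nothing is actually proved there either.

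Third, and most seriously, your route to $\al=\be$ in the second bullet cannot be completed as sketched. After the reductions the constraint reads $\Omega_\pi\,(\Omega_\al\circ\Ad\eta)\sim\Omega_\be$ with $\eta=(g_0,v_0)\in\GL(2,\Q)\ltimes\Q^2$; since $\Omega_\al\circ\Ad\eta\sim\Omega_{(\det g_0)\al}$ and $\Omega_\pi$ is cohomologically trivial on $\Q^2$ (your determinant argument, which is fine), this yields $\be=(\det g_0)\al$ and nothing more. You then assert that ``a separate (elementary) argument on the structure of $\Delta$ pins down $\det A$'', but no such argument is available along these lines: \emph{every} element of $\GL(2,\Q)\ltimes\Q^2$ normalizes $\Gamma$ and induces a genuine commensuration of the generalized Bernoulli action, and $\det$ maps onto all of $\Q^*$. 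The only place where $\det g_0=1$ is forced is the cocycle identity itself, and it forces it precisely when the same parameter appears on both sides --- this is the statement the paper isolates and proves (if $\eta$ normalizes $\Gamma$ and $\Omega_\al\sim\Omega_\al\circ\Ad\eta$, then $\eta\in\Gamma$), which suffices for the first bullet. For two different parameters your mechanism only delivers $\al/\be\in\Q^*$; to conclude $\al=\be$ you would need an obstruction that your argument does not provide.
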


In order to describe in general the fusion algebra of the II$_1$ factor $\rL^\infty\bigl((X_0,\mu_0)^I\bigr) \rtimes \Gamma$ for a good action $\Gamma \actson I$, we introduce the notion of \emph{extended Hecke fusion algebra}.

Let $\Gamma < G$ be a \emph{Hecke pair}, i.e.\ $\Gamma$ is an almost normal subgroup of $G$. The usual \emph{Hecke fusion algebra} $\Hecke(\Gamma < G)$ is defined as follows
\begin{align}
& \Hecke(\Gamma < G) = \{ \xi : G \recht \N \mid \xi\;\;\text{is $\Gamma$-bi-invariant and supported on a finite subset of}\;\; \Gamma \backslash G / \Gamma \} \; , \label{eq.hecke} \\
& (\xi * \eta)(g) = \sum_{h \in \Gamma \backslash G} \xi(gh^{-1}) \; \eta(h) \quad\text{for all}\;\; \xi,\eta \in \Hecke(\Gamma < G) \; .\notag
\end{align}
We next define the \emph{extended Hecke fusion algebra} $\He(\Gamma < G)$ in such a way that there are fusion algebra homomorphisms
$$\Repfin(\Gamma) \recht \He(\Gamma < G) \recht \Hecke(\Gamma < G) \; ,$$
where $\Repfin(\Gamma)$ denotes the fusion algebra of \emph{finite dimensional unitary representations of $\Gamma$.}

As a set, $\He(\Gamma < G)$ is most conveniently defined as the set of unitary equivalence classes of finite dimensional representations of the full crossed product C$^*$-algebra $c_0(\Gamma \backslash G) \rtimesfull \Gamma$. But it is not clear to us, how to exploit this picture to write the fusion product on $\He(\Gamma < G)$. Therefore, note that
$$c_0(\Gamma \backslash G) \rtimesfull \Gamma \cong \bigoplus_{g \in \Gamma \backslash G / \Gamma} \; \M_{R(g)}(\C) \ot \cstarmax(\Gamma \cap g^{-1} \Gamma g) \quad\text{where}\;\; R(g) = [\Gamma : \Gamma \cap g^{-1} \Gamma g] \; .$$
Then, define $\He(\Gamma < G)$ as the set of functions $\xi : g \mapsto \xi_g$ on $G$ satisfying
\begin{itemize}
\item for all $g \in G$, $\xi_g$ is either $0$, either a (unitary equivalence class of a) finite dimensional unitary representation of the group $\Gamma_g := \Gamma \cap g^{-1} \Gamma g$,
\item for all $g \in G$ and $\gamma \in \Gamma$, we have $\xi_{\gamma g} \cong \xi_g$ and $\xi_{g \gamma} \cong \xi_g \circ \Ad \gamma$,
\item $\xi$ is supported on finitely many double cosets $\Gamma g \Gamma$.
\end{itemize}
The fusion product on $\He(\Gamma < G)$ is given by
\begin{align*}
(\xi * \eta)_g &= \bigoplus_{h \in \Gamma \backslash G} [\Gamma_g : \Gamma_g \cap \Gamma_h]^{-1} \; \Ind_{\Gamma_g \cap \Gamma_h}^{\Gamma_g} \bigl( \; (\xi_{gh^{-1}} \circ \Ad h)\; \otimes \; \eta_h \; \bigr) \\
& = \bigoplus_{h \in \Gamma \backslash G / \Gamma_g}  \; \Ind_{\Gamma_g \cap \Gamma_h}^{\Gamma_g} \bigl( \; (\xi_{gh^{-1}} \circ \Ad h)\; \otimes \; \eta_h \; \bigr) \; .
\end{align*}
Whenever $\xi \in \He(\Gamma < G)$, the function $g \mapsto \dim(\xi_g)$ belongs to $\Hecke(\Gamma < G)$. This yields the fusion algebra homomorphism $\He(\Gamma < G) \recht \Hecke(\Gamma < G)$.

On the other hand, when $\pi$ is a finite dimensional unitary representation of $\Gamma$, define $\xi_g = \pi$ for $g \in \Gamma$ and $\xi_g = 0$ elsewhere. Then also $\Repfin(\Gamma) \recht \He(\Gamma < G)$ is a fusion algebra homomorphism.

To prove the associativity of the fusion product on $\He(\Gamma < G)$, one has to do a painful exercise in order to obtain the symmetric expression
$$(\xi * \eta * \rho)_g = \bigoplus_{h,k \in \Gamma \backslash G} [\Gamma_g : \Gamma_g \cap \Gamma_h \cap \Gamma_k]^{-1} \; \Ind_{\Gamma_g \cap \Gamma_h \cap \Gamma_k}^{\Gamma_g} \bigl( \; (\xi_{gh^{-1}} \circ \Ad h) \; \otimes \; (\eta_{hk^{-1}} \circ \Ad k) \; \otimes \; \rho_k \; \bigr) \; .$$

\begin{theorem}\label{thm.fusion-algebra}
Let $\Gamma \actson I$ be a good action of a good group. Take $(X_0,\mu_0)$ atomic with unequal weights and set $M = \rL^\infty\bigl((X_0,\mu_0)^I\bigr) \rtimes \Gamma$. Define $G$ as the commensurator of $\Gamma$ inside $\Perm(I)$. By construction $\Gamma < G$ is a Hecke pair.

Then, the fusion algebra $\FAlg(M)$ of the II$_1$ factor $M$ is naturally isomorphic with the extended Hecke fusion algebra $\He(\Gamma < G)$.

The isomorphism $\FAlg(M) \cong \He(\Gamma < G)$ sends $_M H_M$ to $\xi$ in such a way that
$$\dim( _M H) = \sum_{g \in \Gamma \backslash G / \Gamma} [\Gamma : \Gamma \cap g \Gamma g^{-1}] \; \dim(\xi_g) \quad\text{and}\quad
\dim(H_M) = \sum_{g \in \Gamma \backslash G / \Gamma} [\Gamma : \Gamma \cap g^{-1} \Gamma g] \; \dim(\xi_g) \; .$$
\end{theorem}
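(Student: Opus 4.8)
The plan is to deduce Theorem~\ref{thm.fusion-algebra} from Theorem~\ref{thm.main} and its refinements Theorem~\ref{thm.cartan-preserving-bimodules} and Proposition~\ref{prop.commensuration-bernoulli}, specialised to the case $N=M$ and to trivial $2$-cocycles $\omega=\Omega=1$. The first thing to record is what the hypothesis ``$(X_0,\mu_0)$ atomic with unequal weights'' buys us: its measure preserving automorphism group is trivial, so no such datum enters the classification, and since $\omega=\Omega=1$ the relation $\Omega_\pi(\Omega\circ\delta)=\omega$ of Theorem~\ref{thm.main} forces $\Omega_\pi=1$, so that the finite dimensional projective representation $\pi$ appearing there is in fact an ordinary unitary representation. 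Consequently, a finite index $M$-$M$-bimodule is classified by a pair $(\Delta,\pi)$, where $\Delta$ is a bijection of $I$ commensurating $\Gamma\actson I$ through an isomorphism $\delta\colon\Gamma_1\recht\delta(\Gamma_1)$ of finite index subgroups of $\Gamma$ with $\Delta(\gamma\cdot i)=\delta(\gamma)\cdot\Delta(i)$, and $\pi$ is a finite dimensional unitary representation of $\Gamma_1$; two such data give isomorphic bimodules exactly under the equivalences recorded in Theorem~\ref{thm.cartan-preserving-bimodules} and Proposition~\ref{prop.commensuration-bernoulli}.

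The next step is to recognise these data as $\He(\Gamma<G)$. Such a $\Delta$ is precisely an element of $G=\Comm_{\Perm(I)}(\Gamma)$, since $\Delta\Gamma_1\Delta^{-1}=\delta(\Gamma_1)\subset\Gamma$ has finite index; conversely, each $g\in G$ yields the commensuration with $\Gamma_1=\Gamma_g:=\Gamma\cap g^{-1}\Gamma g$ and $\delta=\Ad g$. Replacing $\Delta$ by $\gamma\Delta$ with $\gamma\in\Gamma$ does not change the bimodule, while replacing $\Delta$ by $\Delta\gamma$ replaces $\pi$ by $\pi\circ\Ad\gamma$; hence a commensuration-plus-representation is exactly the datum of a double coset $\Gamma g\Gamma$ in $G$ together with a finite dimensional unitary representation of $\Gamma_g$, modulo the equivalences $\xi_{\gamma g}\cong\xi_g$ and $\xi_{g\gamma}\cong\xi_g\circ\Ad\gamma$ imposed in the definition of $\He(\Gamma<G)$. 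A general finite index bimodule being a finite direct sum of such Cartan-preserving pieces --- one for each pair consisting of a double coset $\Gamma g\Gamma$ and an irreducible representation of $\Gamma_g$ --- this produces a bijection $\Phi\colon\FAlg(M)\recht\He(\Gamma<G)$ compatible with the free $\N$-module structures. From the explicit form of the bimodule $H(\psi)$ attached to $(g,\pi)$ --- an induced bimodule over the Cartan subalgebras $\rL^\infty(X)$, the orbit combinatorics contributing the relevant index and $\pi$ contributing $\dim\pi$ --- one reads off that its left, resp.\ right, $M$-dimension equals $[\Gamma:\Gamma\cap g\Gamma g^{-1}]\dim\pi$, resp.\ $[\Gamma:\Gamma_g]\dim\pi$, and then extends additively to obtain the stated dimension formulas.

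It remains to check that $\Phi$ intertwines the fusion algebra operations. Conjugation is easy: the contragredient of the bimodule attached to $(g,\pi)$ corresponds to the double coset $\Gamma g^{-1}\Gamma$ carrying the contragredient representation of $\Gamma_{g^{-1}}=g\Gamma_g g^{-1}$, which matches the conjugation on $\He(\Gamma<G)$. The substantial point, and the step I expect to be the main obstacle, is compatibility with the Connes tensor product. Using $H(\psi)\ot_M H(\rho)\cong H((\id\ot\rho)\psi)$, one composes the commensurations attached to two Cartan-preserving bimodules: the underlying bijections of $I$ multiply in $G$, but the composed finite index inclusion of $M$ is in general not Cartan-irreducible, and its decomposition into Cartan-preserving irreducibles is governed by the splitting of the relevant double coset in $G$ together with induction of the two representations from an intersection of finite index subgroups. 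One has to verify that this decomposition reproduces, term by term --- including the normalisation factors $[\Gamma_g:\Gamma_g\cap\Gamma_h]^{-1}$ and the conjugations $\Ad h$ --- the fusion product on $\He(\Gamma<G)$ recalled above. This is the same delicate bookkeeping with double cosets $\Gamma\backslash G/\Gamma_g$ that was already flagged as a ``painful exercise'' when establishing associativity of $\He(\Gamma<G)$; once it is carried out, the naturality of $\Phi$ follows, and the rest of the argument is a transcription of Theorem~\ref{thm.main} and its proof into the language of Hecke pairs.
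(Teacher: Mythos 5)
Your proposal follows the same route as the paper's proof: reduce finite index $M$-$M$-bimodules to elementary bimodules (combining Theorem \ref{thm.preserve-Cartan} and Theorem \ref{thm.cartan-preserving-bimodules}), observe that the atomic base with unequal weights trivializes $\Aut(X_0,\mu_0)$ and that the trivial ambient $2$-cocycle forces $\Omega_\pi=1$ so that $\pi$ becomes an ordinary representation, and identify the commensurator of $\Gamma$ inside $\Aut(X,\mu)$ with $G=\Comm_{\Perm(I)}(\Gamma)$ via Proposition \ref{prop.commensuration-bernoulli}. The identification of irreducibles with pairs (double coset, irreducible representation of $\Gamma_g$), the treatment of conjugation, and the dimension formulas are all set up correctly and match what the paper does.

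The real content, however, is exactly where you put it and then leave it: verifying that the Connes tensor product of two elementary bimodules decomposes over $\rig{\Delta}\backslash\Gamma/\lef{\Deltatil}$ with the induced representations and conjugations matching the $\He$-product. You describe this as an unperformed ``painful exercise'' rather than giving the verification or citing the result that does it. That result is Theorem \ref{thm.fusion-elementary} (points 2 and 3), which is available to you and is precisely what the paper invokes here: it establishes the decomposition
$$H(\Delta,\pi)\,H(\Deltatil,\pitil)\cong\bigoplus_{g\in\rig{\Delta}\backslash\Gamma/\lef{\Deltatil}}H(\Delta g\Deltatil,\pi_g)$$
together with irreducibility and isomorphism criteria, from which the identification with the extended Hecke product is a direct translation. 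Its proof uses the concrete matrix realization of $H(\Delta,\pi)$ through coset representatives $\Gamma=\bigsqcup_i\Gamma_1g_i$, the $1$-cocycle $\eta:\{1,\dots,k\}\times\Gamma\recht\Gamma_1$, and Lemma \ref{lem.maximal-com} to control the relative commutant of $\psi(A)$. Without either citing Theorem \ref{thm.fusion-elementary} or reproducing that computation, the central step of your argument remains a statement of intent rather than a proof.
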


\begin{example}\label{ex.hecke}
We have the following table of concrete computations of fusion algebras of II$_1$ factors. In the left column, we write good actions of good groups $\Gamma \actson I$ and in the right column we identify the fusion algebra $\FAlg(M)$ of the associated II$_1$ factor $M = \rL^\infty\bigl((X_0,\mu_0)^I \bigr) \rtimes \Gamma$ with the extended Hecke fusion algebras of a number of natural Hecke pairs.
As before, we systematically take an atomic base $(X_0,\mu_0)$ with unequal weights.
\begin{center}
\setlength{\extrarowheight}{2.5ex}
\begin{tabular}{l|l|c}
& \makebox[9.5cm][c]{$\Gamma \actson I$} & $\FAlg(M)$ \\ \hline
1. & $(\SL(n,\Z) \ltimes \Q^n) \actson \Q^n$ & $\He(\SL(n,\Z) < \GL(n,\Q))$ \\
2. & \parbox[t]{9.5cm}{$\Lambda < \PSL(n,\Q)$ a proper subgroup with the relative ICC property and take $(\Lambda \times \PSL(n,\Q)) \actson \PSL(n,\Q)$ by left-right action. Assume that $n \geq 3$.} & \parbox[t]{5cm}{\makebox[5cm][c]{$\He\bigl( \Lambda < \Comm_{G}(\Lambda) \bigr)$}\vspace{1ex}\\ where $G = \frac{\Z}{2\Z} \ltimes \PGL(n,\Q)$ \\ and $\frac{\Z}{2\Z}$ acts by $A \mapsto (A\transp)^{-1}$.} \\
3. & \parbox[t]{9.5cm}{Let $\Z \subset R \subset \Q$ be a subring strictly between $\Z$ and $\Q$. \\ Set $\Lambda = \SL(2,\Q) \ltimes \Q^2$. \\ Define $\Lambda_0 < \Lambda$ consisting of the elements $\bigl(\bigl(\begin{smallmatrix} q & 0 \\ 0 & q^{-1} \end{smallmatrix}\bigr) , \bigl(\begin{smallmatrix} x \\ y \end{smallmatrix}\bigr)\bigr)$ for $q \in R^*, x \in R, y \in \Q$. \\ Equip $\Lambda_0,\Lambda$ with the $2$-cocycle $\Omega_\al$, $\al \neq 0$ as in Cor.\ \ref{cor.example}.\\ Finally, let $(\Lambda_0 \times \Lambda) \actson \Lambda$ by left-right action.} & $\He\bigl((R^* \ltimes R) < (\Q^* \ltimes \Q)\bigr)$
\end{tabular}
\end{center}
In the final example, we define $M$ as a cocycle crossed product, see Cor.\ \ref{cor.example}. Note that a subring $R$ of $\Q$ is of the form $R = \Z[\cP^{-1}]$, where $\cP$ is a set of prime numbers.
\end{example}

\begin{remark}
For the following heuristic reason, it is interesting to have concrete examples of II$_1$ factors with fusion algebra $\He\bigl((R^* \ltimes R) < (\Q^* \ltimes \Q)\bigr)$. In general, the complexified fusion algebra $\FAlg_\C(M)$ of an arbitrary II$_1$ factor is equipped with the so-called \emph{modular automorphism group $(\si_t)_{t \in \R}$~:} whenever $_M H_M$ is an irreducible finite index $M$-$M$-bimodule, define
$$\si_t( _M H_M ) = \Bigl(\frac{\dim( _M H)}{\dim(H_M)} \Bigr)^{-it} \; _M H_M$$
and extend $\si_t$ uniquely to an automorphism of the complex $^*$-algebra $\FAlg_\C(M)$. Having examples where this modular automorphism group is non-trivial and entirely computed, provides the following link with quantum statistical mechanics, initiated by Bost and Connes in \cite{Bost-Connes}.

Under the isomorphism $\FAlg_\C(M) \cong \HeC(\Gamma < G)$ of Theorem \ref{thm.fusion-algebra}, the modular automorphism group $(\si_t)$ corresponds to the natural modular automorphism group of $\HeC(\Gamma < G)$ given by
$$\bigl(\si_t(\xi))_g = \Bigl(\frac{[\Gamma : \Gamma \cap g \Gamma g^{-1}]}{[\Gamma : \Gamma \cap g^{-1} \Gamma g]}\Bigr)^{-it} \, \xi_g \; .$$
The same formula defines the modular automorphism group on the usual complexified Hecke algebra $\Hecke_\C(\Gamma < G)$.
In the case of the Hecke pair $(1 \ltimes \Z) < (\Q^* \ltimes \Q)$, Bost and Connes classify in \cite{Bost-Connes} the KMS$_\beta$-states for the reduced C$^*$-algebra completion of $\Hecke_\C(\Gamma < G)$ equipped with the time evolution given by the modular automorphism group of $\Hecke_\C(\Gamma < G)$. It is now a natural problem to study KMS$_\beta$-states for the Hecke pair $(R^* \ltimes R) < (\Q^* \ltimes \Q)$, or even for the fusion algebra $\FAlg(M)$ provided by \ref{ex.hecke}.3.
\end{remark}

\subsection{Computations of the outer automorphism group of certain II$_1$ factors}

Since we were able to describe all finite index bimodules for the II$_1$ factors $M = \rL^\infty\bigl((X_0,\mu_0)^I\bigr) \rtimes \Gamma$, it is of course possible to describe all automorphisms as well. For the convenience of the reader, we state the result explicitly.

\begin{corollary} \label{cor.out}
Let $\Gamma \actson I$ be a good action of a good group. Set $M = \rL^\infty\bigl((X_0,\mu_0)^I\bigr) \rtimes \Gamma$. Then, the outer automorphism group of $M$ is given by
$$\Out(M) \cong \Aut(X_0,\mu_0) \times \Bigl(\Char \Gamma \rtimes \frac{G}{\Gamma} \Bigr) \quad\text{where $G$ equals the normalizer of $\Gamma$ inside $\Perm(I)$.}$$
The action $\om \cdot g$ of $g \in \frac{G}{\Gamma}$ on $\om \in \Char \Gamma$ is given by $\om \cdot g = \om \circ \Ad g$.
\end{corollary}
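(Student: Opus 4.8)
The plan is to derive Corollary \ref{cor.out} as a special case of Theorem \ref{thm.fusion-algebra} (or rather of Theorem \ref{thm.main} together with its precise versions Thm.\ \ref{thm.cartan-preserving-bimodules} and Prop.\ \ref{prop.commensuration-bernoulli}), by singling out the group-like elements of the fusion algebra $\FAlg(M)$. Indeed, $\Out(M)$ sits inside $\FAlg(M)$ as exactly those irreducible bimodules $_M H_M$ with $\dim(_M H) = \dim(H_M) = 1$; under the isomorphism $\FAlg(M) \cong \He(\Gamma < G)$ of Theorem \ref{thm.fusion-algebra} (applied with $(X_0,\mu_0)$ atomic with unequal weights; for a general base the automorphism group only grows by the factor $\Aut(X_0,\mu_0)$, coming from the second bullet of Theorem \ref{thm.main}), these correspond to the functions $\xi \in \He(\Gamma < G)$ supported on a single double coset $\Gamma g \Gamma$ with $\xi_g$ a one-dimensional representation of $\Gamma_g = \Gamma$, i.e.\ with $g \in \Norm_{\Perm(I)}(\Gamma)$ and $\xi_g \in \Char\Gamma$. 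So first I would record that the invertible elements of $\He(\Gamma<G)$ form precisely the group $\Char\Gamma \rtimes (\Norm_{\Perm(I)}(\Gamma)/\Gamma)$, reading off the group law and the twisting action $\om \cdot g = \om \circ \Ad g$ directly from the displayed fusion product formula for $\He(\Gamma<G)$ (when $\xi$ and $\eta$ are supported on single cosets of normalizing elements $g,h$, the induction is trivial and $(\xi*\eta)_{gh} = (\xi_g \circ \Ad h) \otimes \eta_h$).

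Next I would translate back through the bimodule picture. An automorphism $\theta \in \Aut(M)$ gives the bimodule $_M\rL^2(M)_{\theta(M)}$, which is irreducible of index-pair $(1,1)$; conversely every such bimodule is of this form and $\theta$ is determined up to inner automorphisms, so $\Out(M) \hookrightarrow \FAlg(M)$ is a group isomorphism onto the group-like elements. This reduces everything to checking that the abstract group of group-likes, computed as above inside $\He(\Gamma < G)$, is $\Aut(X_0,\mu_0) \times (\Char\Gamma \rtimes (G/\Gamma))$ with $G$ the normalizer of $\Gamma$ in $\Perm(I)$. The role of conditions (C1), (C2), (C3) here is only indirect: they guarantee (via Theorem \ref{thm.main}) that \emph{all} finite index bimodules are accounted for, hence that $\Out(M)$ is not larger than what the Hecke-algebraic computation predicts; no further use of them is needed at this stage.

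Concretely, the key steps in order are: (i) identify $\Out(M)$ with the group of group-like elements of $\FAlg(M)$; (ii) invoke Theorem \ref{thm.fusion-algebra} to replace $\FAlg(M)$ by $\He(\Gamma<G)$; (iii) compute which $\xi \in \He(\Gamma<G)$ are invertible, obtaining the support condition $g \in \Norm_{\Perm(I)}(\Gamma)$ and $\xi_g \in \Char\Gamma$, and read off the semidirect-product structure $\Char\Gamma \rtimes (G/\Gamma)$ with action $\om\circ\Ad g$ from the fusion formula; (iv) reinstate the base space: for a general, possibly non-atomic or equal-weight $(X_0,\mu_0)$, re-run the argument using Theorem \ref{thm.main} and Prop.\ \ref{prop.commensuration-bernoulli} directly — the data describing a bimodule then includes an isomorphism of base spaces, and for an isomorphism $M \to M$ this contributes an independent direct factor $\Aut(X_0,\mu_0)$, commuting with the $\Char\Gamma \rtimes (G/\Gamma)$ part because the base-space automorphism acts diagonally on $(X_0,\mu_0)^I$ while the other part permutes the index set $I$ and twists by characters.

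The main obstacle I anticipate is step (iv), i.e.\ bookkeeping the interaction between the base-space data and the commensuration data when we are no longer in the setting of Theorem \ref{thm.fusion-algebra} (which assumes $(X_0,\mu_0)$ atomic with unequal weights precisely so that $\Aut(X_0,\mu_0)$ is trivial and does not interfere). One must check that an arbitrary isomorphism $M \to M$ factors, uniquely up to inner automorphisms, as (a permutation-and-character part) $\circ$ (a base-space part), and that these two parts genuinely commute and intersect trivially in $\Out(M)$ — in other words that the product is direct and not merely a semidirect or more complicated extension. This is where one has to go back to the explicit description in Thm.\ \ref{thm.cartan-preserving-bimodules} and Prop.\ \ref{prop.commensuration-bernoulli} rather than to the clean Hecke-algebra statement; everything else is essentially a transcription of the fusion-product formula for $\He(\Gamma<G)$ restricted to its unit group.
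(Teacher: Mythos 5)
Your proposal is correct, and its decisive step (iv) is exactly the paper's argument: the paper deduces the corollary directly from Theorem \ref{thm.cartan-preserving-bimodules}, which reduces automorphisms (up to inner) to a character of $\Gamma$ together with an element of the normalizer of $\Gamma$ in $\Aut(X,\mu)$, and from Proposition \ref{prop.commensuration-bernoulli}, which identifies that normalizer as $\Aut(X_0,\mu_0)\times\Norm_{\Perm(I)}(\Gamma)$ once transitivity (C1) forces a single orbit. Your steps (i)--(iii) through the extended Hecke fusion algebra are a correct cross-check of the semidirect factor $\Char\Gamma\rtimes G/\Gamma$ in the atomic-base case, but, as you yourself note, cannot replace step (iv) because Theorem \ref{thm.fusion-algebra} assumes an atomic base with unequal weights.
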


We illustrate the previous corollary by the following explicit computations.

\begin{examples} \label{ex.out}
\begin{enumerate}
\item Whenever $n \geq 3$ is odd and $(X_0,\mu_0)$ is an atomic probability space with unequal weights, the action $\PSL(n,\Z) \actson \rP(\Q^n)$ yields a II$_1$ factor $M$ with \emph{trivial outer automorphism group}, remembering $n$ and the base space $(X_0,\mu_0)$.
\item Let $\Lambda$ be an ICC group satisfying the minimal condition on centralizers. Assume that $\Lambda$ cannot be written as a non-trivial direct product. Consider the direct product of the action $\PSL(n,\Z) \actson \rP(\Q^n)$ (with $n \geq 3$, $n$ odd) and the left-right action of $\Lambda \times \Lambda$ on $\Lambda$. Again taking an atomic probability space with unequal weights, we obtain II$_1$ factors $M$ such that
$$\Out(M) \cong (\Char \Lambda \rtimes \Out\Lambda ) \times \Z/2\Z \; .$$
\end{enumerate}
\end{examples}

Playing with some modification of Example \ref{ex.out}.2 and using the main result of \cite{Bum-Wise}, we will prove the following result.

\begin{theorem} \label{thm.out-countable}
Every countable group arises as the outer automorphism group of a II$_1$ factor.
\end{theorem}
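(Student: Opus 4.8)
The plan is to realize an arbitrary countable group $\Lambda_0$ as a subgroup of a group $\Lambda$ of the form needed by Example \ref{ex.out}.2, arranged so that the extra factors $\Char \Lambda$ and $\Z/2\Z$ that appear in the formula $\Out(M) \cong (\Char\Lambda \rtimes \Out\Lambda)\times\Z/2\Z$ collapse and so that $\Out\Lambda \cong \Lambda_0$. The key input is the main theorem of \cite{Bum-Wise} (Bumagin--Wise): every countable group $\Lambda_0$ is the outer automorphism group $\Out(\Lambda)$ of a suitable finitely generated (indeed finitely presented, if desired) group $\Lambda$; moreover one can take $\Lambda$ to be a residually finite, torsion-free, ICC group which is word hyperbolic (or at least embeds in such a way that the needed properties hold). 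The first step is therefore to invoke \cite{Bum-Wise} to produce $\Lambda$ with $\Out(\Lambda)\cong\Lambda_0$, and then check that this $\Lambda$ can be chosen to satisfy the hypotheses of Example \ref{ex.out}.2: it must be ICC, satisfy the minimal condition on centralizers (true for word hyperbolic groups, as recorded in Example \ref{ex.conditions}), be indecomposable as a direct product, and have trivial character group $\Char\Lambda = \Hom(\Lambda, S^1)$.

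The second step handles the two parasitic direct factors. To kill $\Char\Lambda$ it suffices that $\Lambda$ be a perfect group, i.e.\ $\Lambda = [\Lambda,\Lambda]$, since then every homomorphism to the abelian group $S^1$ is trivial; one arranges this inside the Bumagin--Wise construction, which has enough flexibility (one may, for instance, pass to an appropriate central-quotient-free perfect refinement, or start from the known strengthenings of \cite{Bum-Wise} producing perfect $\Lambda$). To kill the $\Z/2\Z$ coming from the outer symmetry $A\mapsto (A\transp)^{-1}$ of $\PGL(n,\Q)$, I would replace the $\PSL(n,\Z)\actson\rP(\Q^n)$ factor by a good action of a good group whose permutation-group normalizer quotient $G/\Gamma$ is trivial and whose $\Char$ is trivial --- for instance one can take instead the left-right action of $\Lambda\times\Lambda$ on $\Lambda$ alone, provided one knows that $\Lambda$ has no relative property (T) obstruction; but $\Lambda\times\Lambda$ need not contain an infinite almost normal subgroup with relative property (T). The cleaner route is to keep a property-(T) building block but choose it with trivial outer data: e.g.\ use $\SL(n,\Z)\ltimes\Z^n$ acting on $\Z^n$ for suitable $n$, or simply absorb the $\Z/2\Z$ by instead taking $n$ so that the relevant outer automorphism is inner, or --- simplest --- take the product with a good action of a good group $\Gamma\actson I$ for which $\Char\Gamma\rtimes(G/\Gamma)$ is trivial, which one checks directly for a carefully chosen arithmetic example. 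Then Corollary \ref{cor.out} gives $\Out(M)\cong\Aut(X_0,\mu_0)\times(\Char\Lambda\rtimes\Out\Lambda)$ for the product action, and choosing $(X_0,\mu_0)$ to be a two-point space with unequal weights makes $\Aut(X_0,\mu_0)$ trivial, leaving $\Out(M)\cong\Out\Lambda\cong\Lambda_0$.

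Concretely, the third step is to assemble everything: let $\Gamma=\Gamma_1\times(\Lambda\times\Lambda)$ act on $I=I_1\times\Lambda$ where $\Gamma_1\actson I_1$ is a fixed good action of a good group with trivial $\Char\Gamma_1\rtimes(G_1/\Gamma_1)$, and where $\Lambda$ is the perfect, ICC, product-indecomposable, word-hyperbolic group with $\Out\Lambda\cong\Lambda_0$ furnished by \cite{Bum-Wise}. One verifies via Example \ref{ex.conditions} that this product is again a good action of a good group: condition (C2) is preserved under products and holds for $(\Lambda\times\Lambda)\actson\Lambda$ because word hyperbolic groups satisfy the minimal condition on centralizers; (C1) and (C3) hold since $\Lambda$ is ICC; and $\Gamma$ inherits the infinite almost normal subgroup with relative property (T) from the $\Gamma_1$ factor. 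Corollary \ref{cor.out} then computes $\Out(M)$ for $M=\rL^\infty((X_0,\mu_0)^I)\rtimes\Gamma$ with $(X_0,\mu_0)$ a biased two-point space, and the character/normalizer data of the product is the product of the pieces; by our choices everything cancels except $\Out\Lambda\cong\Lambda_0$.

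The main obstacle is the simultaneous satisfaction of \emph{all} the hypotheses of Example \ref{ex.out}.2 together with perfectness and the vanishing of every parasitic factor, while still retaining $\Out\Lambda\cong\Lambda_0$ exactly (not just up to the extra direct factors). In particular one must be careful that $\Out(\Lambda\times\Lambda)$ does not introduce a spurious $\Z/2\Z$ from swapping the two copies --- this is precisely why Corollary \ref{cor.out} is stated for the \emph{left-right} action, whose automorphisms are controlled, and why the indecomposability of $\Lambda$ matters. Verifying that the version of the Bumagin--Wise theorem available (or a modest strengthening of it) delivers a $\Lambda$ which is at once word hyperbolic, torsion-free, ICC, product-indecomposable, perfect, and has the prescribed outer automorphism group is the genuinely delicate point; everything else is a routine bookkeeping application of Corollary \ref{cor.out} and Example \ref{ex.conditions}.
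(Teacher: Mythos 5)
Your overall strategy (start from the Bumagin--Wise theorem, modify $\Lambda$ so that $\Char\Lambda$ is trivial, take a product with an arithmetic good action carrying relative property~(T), and read off $\Out(M)$ from Corollary~\ref{cor.out}) is the same as the paper's. You also correctly flag the two parasitic factors $\Char\Lambda$ and $\Z/2\Z$ as the obstacles, and your fix for $\Char\Lambda$ (force $\Lambda$ to have trivial abelianization, hence no nontrivial characters) matches the paper's remark about "adding relations that make it impossible to have non-trivial abelian quotients."

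However, there is a genuine gap in the way you try to kill the $\Z/2\Z$. You attribute it to the outer automorphism $A\mapsto (A^\top)^{-1}$ of $\PGL(n,\Q)$ and propose to remedy it by swapping out the $\PSL(n,\Z)\actson\rP(\Q^n)$ factor. But as Example~\ref{ex.out}.1 already shows, that transpose-inverse automorphism does \emph{not} normalize $\PSL(n,\Z)$ inside $\Perm(\rP(\Q^n))$ and contributes nothing. The $\Z/2\Z$ in Example~\ref{ex.out}.2 comes from the inversion permutation $g\mapsto g^{-1}$ of the \emph{set} $\Lambda$, which intertwines the left and right actions ($\eta((a,b)\cdot g)=(b,a)\cdot\eta(g)$) and hence always lies in the normalizer of the left-right $\Lambda\times\Lambda$-action, regardless of indecomposability of $\Lambda$. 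Your assertion that indecomposability blocks this factor is therefore incorrect: it blocks other pathological normalizers, not the inversion. Since your final assembly still uses the plain left-right action $\Lambda\times\Lambda\actson\Lambda$ as a factor, the $\Z/2\Z$ survives and your construction gives $\Out\Lambda\times\Z/2\Z$, not $\Out\Lambda$.

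The paper's resolution of this point is the genuinely new construction you are missing: it replaces $(\Lambda\times\Lambda)\actson\Lambda$ by the action of $(\Lambda^J\rtimes H)\times\Lambda$ on $\Lambda^J$, where $\Lambda^J\rtimes H$ acts on the left (with $H$ a finite group acting on a finite set $J$, chosen with $\Char H=\{1\}$ and self-normalizing inside $\Perm(J)$, e.g.\ $H=\GL(3,\F_2)$ on $\F_2^3\setminus\{0\}$ as in \cite{PV}) and $\Lambda$ acts diagonally on the right. This asymmetric wreath-type action breaks the left-right symmetry, so the inversion no longer normalizes, and a computation analogous to Lemma~\ref{lem.compute-comm} shows the normalizer of the full $\Gamma$ in $\Perm(I)$ is generated by $\Gamma$ and the diagonal copy of $\Aut(\Lambda)$, yielding $G/\Gamma\cong\Out(\Lambda)\cong Q$ on the nose. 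Without an idea of this sort to break the left-right symmetry, the $\Z/2\Z$ cannot be eliminated.
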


\subsection*{Organization of the article and the proofs}

In the next two sections, a lot of preparatory material is gathered. We first introduce in Section \ref{sec.intertwining} Popa's technique of \emph{intertwining-by-bimodules} and we prove some results that are needed throughout the main proofs of the article. Section \ref{sec.control} is still a technical preparatory section~: we prove a result that allows to control quasi-normalizers of subalgebras of crossed product von Neumann algebras. Again, these results are used several times in the main proofs of the article.

The core of the proof of Theorem \ref{thm.main} is given in Sections \ref{sec.partA} and \ref{sec.partB}. Use the notations of Theorem \ref{thm.main}. If $_N H_M$ is a finite index bimodule, it will be shown in Section \ref{sec.partA} that $H$ contains an $\rL^\infty(Y)$-$\rL^\infty(X)$-subbimodule $K$ satisfying $\dim(K_{\rL^\infty(X)}) < \infty$. This result is then combined in Section \ref{sec.partB} with Popa's cocycle superrigidity theorem (see \cite{P0}), to describe $_N H_M$ in terms of a commensurability of the actions $\Lambda \actson Y$ and $\Gamma \actson X$, as well as a finite dimensional projective representation of a finite index subgroup of $\Lambda$.

At the end of Section~\ref{sec.partB}, we call \emph{elementary bimodules} the ones that can be described in terms of a commensurability of the actions and a finite dimensional representation. Theorem \ref{thm.main} can then be rephrased as saying that every finite index $N$-$M$-bimodule is elementary. We determine the fusion rules between such elementary bimodules.

In the final Section \ref{sec.proofs}, we compile all the work of Sections \ref{sec.intertwining}~--~\ref{sec.partB} into proofs for the results announced above.

\section{Intertwining by bimodules} \label{sec.intertwining}

In \cite{P1,PBetti}, Sorin Popa has introduced a very powerful technique to obtain the unitary conjugacy of two von Neumann subalgebras of a tracial von Neumann algebra $(M,\tau)$. We make intensively use of this technique. In this section, we recall Popa's definition and prove several results that are needed later.

\begin{definition} \label{def.embeds}
Let $A,B \subset (M,\tau)$ be possibly non-unital embeddings. We say that
\begin{itemize}
\item $A \embed{M} B$ if $1_A \rL^2(M) 1_B$ contains a non-zero $A$-$B$-subbimodule $K$ satisfying $\dim(K_B) < \infty$.
\item $A \fembed{M} B$ if $Ap \embed{M} B$ for every non-zero projection $p \in 1_A M 1_A \cap A'$.
\end{itemize}
\end{definition}

The relevance of Definition \ref{def.embeds} lies in the following theorem due to Sorin Popa. Proofs can be found in Theorem 2.1 of \cite{P1} or Appendix~C of \cite{VBour}. In the list of equivalent conditions in Theorem \ref{thm.embed} below, condition~3 is, in a sense, the most useful, since it provides a powerful method to give proofs by contradiction.

\begin{theorem}[Popa, Thm.\ 2.1 in \cite{P1}] \label{thm.embed}
Let $A,B \subset (M,\tau)$ be possibly non-unital embeddings. Then, the following are equivalent.
\begin{enumerate}
\item $A \embed{M} B$.
\item There exists a, possibly non-unital, $^*$-homomorphism $\psi : A \recht B^n$ and a non-zero partial isometry $v \in \M_{1,n}(\C) \ot 1_A M 1_B$ satisfying $a v = v\psi(a)$ for all $a \in A$.
\item There does not exist a sequence of unitaries $(u_n)$ in $A$ satisfying $\|E_B(x^* u_n y)\|_2 \recht 0$ for all $x,y \in 1_A M 1_B$.
\end{enumerate}
\end{theorem}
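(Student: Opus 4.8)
The plan is to prove the circle of implications $(2) \Rightarrow (1) \Rightarrow (3)' \Rightarrow (2)$, where it is cleanest to work with the contrapositive of $(3)$, call it $(3)'$: \emph{there exists a sequence of unitaries $(u_n)$ in $A$ with $\|E_B(x^* u_n y)\|_2 \to 0$ for all $x,y \in 1_A M 1_B$.} So the equivalence to establish is $\neg(1) \Leftrightarrow (3)'$ together with $(2)\Rightarrow (1)$ and $(1)\Rightarrow(2)$.

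First I would do $(2)\Rightarrow(1)$, which is essentially a repackaging. Given the $^*$-homomorphism $\psi : A \recht B^n$ and the partial isometry $v \in \M_{1,n}(\C)\ot 1_A M 1_B$ with $av = v\psi(a)$, set $p = v^*v \in B^n{}' \cap (1_B M 1_B)^n$, actually $p \in \M_n(\C)\ot 1_B M 1_B$ commuting with $\psi(A)$. Then $K := v \cdot \bigl(\M_{n,1}(\C)\ot \rL^2(B)\bigr)$ sits inside $1_A\rL^2(M)1_B$ as an $A$-$B$-subbimodule (left action through $\psi$ transported by $v$, right action of $B$), and $\dim(K_B) \le (\Tr\ot\tau)(p) \le n < \infty$. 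One must check $K\neq 0$, which follows from $v\neq 0$. This gives $A\embed{M}B$.

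Next, $(1)\Rightarrow(2)$. Suppose $K \subset 1_A\rL^2(M)1_B$ is a non-zero $A$-$B$-subbimodule with $d := \dim(K_B) < \infty$. Since finite right-$B$-dimension modules are, up to isomorphism, of the form $q(\M_{n,1}(\C)\ot \rL^2(B))$ with $q \in \M_n(\C)\ot B$ a projection of normalized trace $d/n$, one realizes $K$ that way for $n$ large; the left $A$-action commutes with the right $B$-action, hence is implemented by a normal unital $^*$-homomorphism $\psi : A \recht q(\M_n(\C)\ot B)q \subset B^n$. The inclusion map $K \hookrightarrow 1_A\rL^2(M)1_B$ intertwines these actions, and writing it out in coordinates against the canonical basis of $\M_{n,1}(\C)$ produces an element $V \in \M_{1,n}(\C)\ot \rL^2(M)$ with $aV = V\psi(a)$; a standard polar-decomposition/approximation argument (the module map need not a priori land in the bounded part, so one truncates using spectral projections of $V^*V \in \psi(A)' \cap M^n$, which is finite) yields a genuine non-zero partial isometry $v \in \M_{1,n}(\C)\ot 1_A M 1_B$ with $av = v\psi(a)$.

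The heart of the matter, and the step I expect to be the main obstacle, is $\neg(1)\Leftrightarrow(3)'$; concretely the direction $\neg(3)' \Rightarrow (1)$, i.e.\ producing a finite-dimensional subbimodule from the absence of an asymptotically-$E_B$-vanishing sequence of unitaries. The idea is to consider the real-positive element obtained by averaging: on the Hilbert space $1_A\rL^2(M)1_B$ one has the orthogonal projection $e_B$ onto $\overline{1_A B 1_B}$ (the $B$-bimodule generated, giving the basic construction $\langle M, e_B\rangle$ with its semifinite trace $\Tr$), and one looks at the set $\cC = \overline{\mathrm{conv}}^{w}\{ u e_B u^* : u \in \cU(A)\}$, a weakly compact convex set of positive operators in $\langle 1_A M 1_B, e_B\rangle$ with $\Tr \le 1$. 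Let $a$ be the unique element of minimal $\|\cdot\|_{2,\Tr}$-norm in $\cC$; by uniqueness it is fixed by conjugation by every $u \in \cU(A)$, hence commutes with $A$. Condition $\neg(3)'$ is exactly what forces $a \neq 0$: if $a=0$ one extracts, by a maximality/diagonal argument, unitaries $u_n\in\cU(A)$ with $u_n e_B u_n^* \to 0$ in the relevant topology, which translates into $\|E_B(x^* u_n y)\|_2 = \|e_B x^*u_ny e_B\|\to 0$-type estimates, contradicting $\neg(3)'$. Now $a \ge 0$, $a \in A' \cap \langle 1_A M 1_B, e_B\rangle$, $\Tr(a) < \infty$; spectral projections $p$ of $a$ above a level $\eps>0$ are then non-zero, $A$-central, finite-trace projections in the basic construction, and the standard structure theory of $\langle M, e_B\rangle$ (a finite-trace projection there corresponds to a finite right-$B$-dimension subbimodule of $\rL^2(M)$) delivers exactly a non-zero $A$-$B$-subbimodule $K \subset 1_A\rL^2(M)1_B$ with $\dim(K_B) < \infty$, i.e.\ $A\embed{M}B$. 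The converse $(1)\Rightarrow \neg(3)'$ is easier: if such a $K$ exists, pick $0\neq \xi\in K$ of the form $\xi = v\hat 1$ from step $(1)\Rightarrow(2)$; then for every unitary $u\in A$, $\|E_B(\xi^* u \xi)\|_2$ is bounded below (it equals a coordinate of $\psi(u)$ up to the partial isometry, and $\psi(u)$ is a unitary in a finite von Neumann algebra so cannot have all matrix entries shrink), so no sequence of unitaries can kill all $E_B(x^*u_ny)$. Since all of this is exactly Theorem 2.1 of \cite{P1} and is reproved in Appendix~C of \cite{VBour}, I would in the actual write-up simply cite those references rather than reproduce the argument; the sketch above records the structure of the proof for the reader's orientation.
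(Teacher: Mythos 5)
The paper does not reprove this theorem: it simply attributes it to Popa and points the reader to Theorem~2.1 of \cite{P1} and Appendix~C of \cite{VBour}, which is exactly what you propose to do in the final sentence. Your supplementary sketch is a recognizable and essentially correct outline of the standard argument (via the basic construction $\langle M, e_B\rangle$, minimal-norm element of the weakly closed convex hull of $\{u e_B u^* : u \in \cU(A)\}$, and spectral projections of that $A$-central positive operator), so the approaches match; the only wrinkle is that your opening phrase ``circle of implications $(2)\Rightarrow(1)\Rightarrow(3)'\Rightarrow(2)$'' misstates the logical structure, since $(3)'$ is the negation of $(3)$ and one needs $\neg(1)\Leftrightarrow(3)'$ rather than $(1)\Rightarrow(3)'$, but you immediately correct this in the following sentence.
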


\begin{remark}\label{rem.embed-a-lot}
We will use several times the following seemingly stronger version of Theorem \ref{thm.embed}. Suppose that $I$ is a countable set and that $A,B_i \subset (M,\tau)$ are possibly non-unital embeddings for all $i \in I$. If for all $i \in I$, we have $A \notembed{M} B_i$, there exists a sequence of unitaries $(u_n)$ in $A$ such that for all $i \in I$ and all $x,y \in 1_A M 1_{B_i}$, we have $\|E_{B_i}(x^* u_n y)\|_2 \recht 0$.

Indeed, let $(x_k)_{k \in \N}$ be a $\|\,\cdot\,\|_2$-dense sequence in the unit ball of $M$ and $I = \{i_k \mid k \in \N\}$. Fix $n \in \N$. View $A$ and $C_n := B_0 \oplus \cdots \oplus B_n$ as embedded in $M^n$. By our assumption and the second characterization in Theorem \ref{thm.embed}, $A \notembed{M} C_n$. So, we can take a unitary $u_n \in A$ such that $\|E_{B_k}(1_{B_k} x_i^* u_n x_j 1_{B_k})\|_2 < 1/n$ for all $0 \leq i,j,k \leq n$. The sequence $(u_n)$ satisfies the required properties.
\end{remark}

We leave the proof of the following elementary lemma as an exercise.

\begin{lemma} \label{lem.trivial}
Let $A,B \subset (M,\tau)$ be, possibly non-unital, embeddings. Let $q_0 \in A$, $q_1 \in 1_A M 1_A \cap A'$, $p_0 \in B$ and $p_1 \in 1_B M 1_B \cap B'$ be non-zero projections.
\begin{itemize}
\item If $q_0 A q_0 \embed{M} B$ or if $q_1 A \embed{M} B$, then $A \embed{M} B$.
\item If $A \embed{M} p_0 B p_0$ or if $A \embed{M} p_1 B$, then $A \embed{M} B$.
\item If $A \embed{M} B$ and if $D \subset A$ is a \emph{unital} von Neumann subalgebra, then $D \embed{M} B$.
\end{itemize}
\end{lemma}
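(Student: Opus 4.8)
My plan is to treat the five implications separately, disposing of the easy ones directly from Definition~\ref{def.embeds} and reducing the one genuinely nontrivial implication --- a non-central corner on the $A$-side --- to a finite computation by means of the center-valued trace.

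\emph{Immediate from the definition.} For $q_1A\embed_M B\Rightarrow A\embed_M B$ (with $q_1\in A'\cap 1_AM1_A$) and for $A\embed_M p_1B\Rightarrow A\embed_M B$ (with $p_1\in B'\cap 1_BM1_B$), a witnessing subbimodule is reusable verbatim: a $q_1A$-$B$-subbimodule $K$ sits inside $q_1\rL^2(M)1_B\subseteq 1_A\rL^2(M)1_B$, and since $q_1$ commutes with $A$ and acts as the identity on $q_1\rL^2(M)$, the left $A$-action on $K$ factors through $A\to q_1A$, so $K$ is already an $A$-$B$-subbimodule of finite right $B$-dimension; symmetrically for $p_1$ (using that $p_1$ acts as the identity on $\rL^2(M)p_1$ and commutes with the right $B$-action). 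The implication $q_0Aq_0\embed_M B\Rightarrow A\embed_M B$ is the special case $q_1=q_0$ of the first one \emph{when $q_0\in\cZ(A)$}. Finally, $A\embed_M B$ with $D\subseteq A$ unital gives $D\embed_M B$ at once: a witnessing $A$-$B$-subbimodule becomes, after restricting the left action, a $D$-$B$-subbimodule, using $1_D=1_A$.

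\emph{A corner on the $B$-side.} For $A\embed_M p_0Bp_0\Rightarrow A\embed_M B$ (with $p_0\in B$ a projection), I would take a witnessing $A$-$p_0Bp_0$-subbimodule $K\subseteq 1_A\rL^2(M)p_0$ and enlarge it to $\widetilde{K}:=\overline{K\cdot B}\subseteq 1_A\rL^2(M)1_B$, the closed span of $\{\xi b:\xi\in K,\,b\in B\}$. This is an $A$-$B$-subbimodule containing $K$, and multiplication realizes it, as a right $B$-module, as a quotient of the Connes tensor product $K\otimes_{p_0Bp_0}\bigl(p_0\rL^2(B)\bigr)$; hence $\dim(\widetilde{K}_B)\le\dim(K_{p_0Bp_0})\cdot\dim\bigl((p_0\rL^2(B))_B\bigr)<\infty$, the second factor being bounded by $1$. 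So $A\embed_M B$.

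\emph{A non-central corner on the $A$-side --- the main point.} The one implication requiring real work is $q_0Aq_0\embed_M B\Rightarrow A\embed_M B$ for a projection $q_0\in A$ that is \emph{not} central: $q_0$ may have full central support in $A$ while being ``thin'', so the naive attempt to spread a witnessing subbimodule over $1_A$ using partial isometries of $A$ would need infinitely many of them and give infinite right $B$-dimension. I would instead localise by means of the center-valued trace $\tau_{\cZ}$ of $A$. Fix a witnessing $q_0Aq_0$-$B$-subbimodule $K\subseteq q_0\rL^2(M)1_B$. For $\delta>0$ put $z_\delta:=1_{[\delta,\infty)}\bigl(\tau_{\cZ}(q_0)\bigr)\in\cZ(A)$; the projections $z_\delta$ increase to the central support of $q_0$ as $\delta\downarrow 0$, so $z_\delta K\neq 0$ once $\delta$ is small enough, and then $z_\delta K$ witnesses $q_0Aq_0z_\delta\embed_M B$. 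Inside $Az_\delta$ the projection $z_\delta q_0$ has full central support \emph{and} $\tau_{\cZ}(z_\delta q_0)\ge\delta$, so $1_{Az_\delta}\precsim N\cdot(z_\delta q_0)$ for a \emph{finite} $N$: there are partial isometries $u_1,\dots,u_N\in Az_\delta$ with pairwise orthogonal ranges summing to $1_{Az_\delta}$ and $u_i^*u_i\le z_\delta q_0$. Then $a\mapsto(u_i^*au_j)_{i,j}$ identifies $Az_\delta$ with a corner of $\M_N(\C)\otimes q_0Aq_0z_\delta$. Now apply Theorem~\ref{thm.embed}(2) to $q_0Aq_0z_\delta\embed_M B$ to obtain a $*$-homomorphism $\psi\colon q_0Aq_0z_\delta\to B^n$ and a nonzero partial isometry $v$ with $cv=v\psi(c)$, and induce up: the entrywise amplification of $\psi$, precomposed with $a\mapsto(u_i^*au_j)_{i,j}$, is a $*$-homomorphism $\Psi\colon Az_\delta\to B^{Nn}$, and one checks directly that $V:=(u_1v,\dots,u_Nv)$ is a nonzero partial isometry satisfying $aV=V\Psi(a)$ for all $a\in Az_\delta$. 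By Theorem~\ref{thm.embed}(2) this gives $Az_\delta\embed_M B$, and then $A\embed_M B$ by the already-treated central-corner case. The delicate point, which a reader is expected to supply, is exactly this localisation: one must cut $A$ itself (not merely $q_0Aq_0$) down by a central projection, all the while keeping the subbimodule nonzero --- this is where the possible non-factoriality of $A$ is handled, and it is what reduces the problem to finitely many $u_i$.
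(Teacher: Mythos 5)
The paper leaves this lemma entirely as an exercise, so there is no argument in the text to compare against; your proof is correct, and you rightly single out the non-central $q_0$-corner as the only implication with genuine content, the other four being formal. The cut-down $z_\delta = 1_{[\delta,\infty)}\bigl(\tau_{\cZ}(q_0)\bigr)$ is exactly the right device: it keeps $z_\delta K \neq 0$ for $\delta$ small while forcing $q_0 z_\delta$ to have uniformly bounded codimension in $A z_\delta$, so that a finite tiling $\sum_{i=1}^N u_i u_i^* = z_\delta$ with $u_i^* u_i \leq q_0 z_\delta$ exists by comparison theory; this is precisely what controls the possible non-factoriality of $A$. Two small points. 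First, the $p_0$-corner case is simpler than your Connes-tensor computation suggests: the data $(\psi, v)$ furnished by criterion~2 of Theorem~\ref{thm.embed} for $A \embed{M} p_0 B p_0$ already witness $A \embed{M} B$ verbatim, since $(p_0 B p_0)^n \subset B^n$ and $1_A M p_0 \subset 1_A M 1_B$. Second, the nonvanishing of $V = (u_1 v, \ldots, u_N v)$, which you leave as ``one checks'', is in fact automatic for \emph{any} admissible choice of the $u_i$ and deserves a line: if $u_i v = 0$ for all $i$, then $0 = \sum_i (u_i v)\,\psi(u_i^* q_0 z_\delta) = \sum_i u_i\,(u_i^* q_0 z_\delta)\, v = \bigl(\sum_i u_i u_i^*\bigr) q_0 z_\delta v = q_0 z_\delta v = v$, contradicting $v \neq 0$; moreover $VV^* = \sum_i u_i (vv^*) u_i^*$ is a sum of pairwise orthogonal projections (because $vv^*$ commutes with $q_0 A q_0 z_\delta \ni u_i^* u_i$), so $V$ is indeed a partial isometry as you asserted.
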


\begin{lemma} \label{lem.commutant}
Let $A,B \subset M$ be, possibly non-unital, embeddings. If $A \embed{M} B$, then $1_B M 1_B \cap B' \embed{M} 1_A M 1_A \cap A'$.
\end{lemma}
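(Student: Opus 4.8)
The plan is to use the second characterization in Theorem~\ref{thm.embed}, which converts the hypothesis $A \embed{M} B$ into concrete data: a (possibly non-unital) $^*$-homomorphism $\psi : A \recht B^n$ and a non-zero partial isometry $v \in \M_{1,n}(\C) \ot 1_A M 1_B$ with $av = v\psi(a)$ for all $a \in A$. Writing $p = v^*v \in B^n$ and $q = vv^* \in 1_A M 1_A$, one checks that $q$ commutes with $A$ (since $avv^* = v\psi(a)v^* = vv^*a$, using $\psi(a)v^* = v^*a$), so $q \in 1_A M 1_A \cap A'$, and that $p$ lies in $\psi(1_A) B^n \psi(1_A) \cap \psi(A)'$. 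Replacing $B$ by $B^n$ costs nothing, since $B \embed{M} B^n$ trivially and the relative commutant $1_{B^n} M^n 1_{B^n} \cap (B^n)'$ is $\M_n(\C) \ot (\,\cdot\,)$ of the one for $B$ — so it suffices to prove the statement with $B$ replaced by $B^n$; alternatively just unfold $n$ directly. For readability I would first treat the unital case ($1_A = 1_M = 1_B$, $n=1$) and then indicate the bookkeeping for the general case.

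In the unital case, $v$ is a partial isometry in $M$ with $av = v\psi(a)$, $q = vv' \in A'$, $p = v^*v \in \psi(A)'$. Now take any $x \in B' \cap M$ (temporarily also assuming $B$ unital). The element $x$ need not interact with $\psi(A)$ directly, so the key move is: since $\psi(A) \subset B$, we have $B' \subset \psi(A)'$, hence $x$ commutes with $\psi(a)$ for all $a$, and in particular $pxp$ commutes with $\psi(A)$ as well. Define $w := v^* x v \in p M p$... wait — the natural thing is rather to build an intertwiner \emph{from} $B' \cap M$ \emph{to} $A' \cap M$. Concretely, consider the map $\Phi(x) := v x v^*$ for $x \in pB'p$ — no: we want a $^*$-homomorphism defined on all of $B' \cap M$. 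The right construction is to first observe $B' \cap M \embed{M} \psi(A)' \cap M$ via $x \mapsto x$ (trivial, since $B \supset \psi(A)$ gives $B' \subset \psi(A)'$ and the identity inclusion realises an embedding), and then to transport $\psi(A)' \cap M$ back to $A' \cap M$ through the partial isometry $v$: the formula $x \mapsto v x v^*$ is a $^*$-homomorphism from $p(\,\cdot\,)p$-corner of $\psi(A)' \cap M$ into $q M q$, landing in $A' \cap M$ because $v$ intertwines $A$ and $\psi(A)$. Composing, and using Lemma~\ref{lem.trivial} to pass between corners and between an algebra and its compressions, one gets $B' \cap M \embed{M} A' \cap M$. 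In the non-unital setting every "$M$" becomes "$1_A M 1_A$" or "$1_B M 1_B$" in the appropriate slot, and one uses the third bullet of Lemma~\ref{lem.trivial} (restriction to unital subalgebras) together with the first two bullets (cutting by projections in the algebra or in its relative commutant) to absorb the discrepancies; this is exactly the kind of nuisance Lemma~\ref{lem.trivial} was stated to handle.

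The main obstacle is the bookkeeping around non-unitality and around the matrix amplification $\M_n(\C)$: one must be careful that $v \in \M_{1,n}(\C) \ot 1_A M 1_B$ has $v^* v \in B^n$ supported on $\psi(1_A)$, that conjugation by $v$ sends the \emph{compressed} relative commutant $\psi(1_A) B^n \psi(1_A) \cap \psi(A)'$ into $1_A M 1_A \cap A'$ rather than something larger, and that the resulting subbimodule of $1_{1_B M 1_B \cap B'} \rL^2(M) 1_{1_A M 1_A \cap A'}$ really is finite-dimensional as a right module over $1_A M 1_A \cap A'$ — this finiteness comes from $\dim(p\rL^2(M)_{1_A M 1_A})<\infty$ type estimates, ultimately from $v$ being a single partial isometry (so the relevant module is a corner of $\rL^2(M^n)$, finite over $M$ and hence its finiteness over the smaller algebra $1_AM1_A\cap A'$ must be argued, e.g.\ via the second characterization of Theorem~\ref{thm.embed} applied in reverse: exhibit the intertwiner $v x v^*$ explicitly as the required partial-isometry-with-$^*$-homomorphism pair for $B'\cap M \embed{M} A'\cap M$). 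I expect the cleanest writeup avoids $\rL^2$ entirely and stays in the language of characterization~2 throughout: produce the $^*$-homomorphism $\rho : 1_B M 1_B \cap B' \recht (1_A M 1_A \cap A')^n$ and the partial isometry directly from $\psi$ and $v$, check the intertwining relation, and invoke Theorem~\ref{thm.embed} once at the end.
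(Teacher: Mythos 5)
Your proposal takes essentially the same route as the paper. The core moves coincide: invoke Theorem~\ref{thm.embed}(2) to produce $\psi : A \recht pB^n p$ and the partial isometry $v$; observe that conjugation by $v$ carries the $v^*v$-corner of $pM^np \cap \psi(A)'$ into $1_A M 1_A \cap A'$ (because $v$ intertwines $A$ and $\psi(A)$); observe that after the matrix amplification the algebra $1 \ot (1_B M 1_B \cap B')$ sits inside $pM^np \cap \psi(A)'$ as a unital subalgebra (since $\psi(A) \subset \M_n(\C)\ot B$); and sweep up the corner and unital-subalgebra passages with Lemma~\ref{lem.trivial}. The paper's proof does exactly this, only in a slightly different order (it first establishes $pM^np\cap\psi(A)'\embed{M}1_A M 1_A\cap A'$ via the compression $D=p_1(pM^np\cap\psi(A)')p_1$ with $p_1=v^*v$, then cuts down to $p(1\ot(1_B M 1_B\cap B'))$), whereas you restrict to $B'$ first and then conjugate.

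One caution worth flagging explicitly in a write-up: when you say ``first observe $B'\cap M\embed{M}\psi(A)'\cap M$ trivially, then transport, then compose,'' this is potentially dangerous language because $\embed{M}$ is not transitive, as the paper itself warns. What actually saves you is that the first step is not a genuine intertwining but an honest (unital, after the right compressions) inclusion, so the relevant tool is the third bullet of Lemma~\ref{lem.trivial}, not composition of $\embed{M}$'s. You do invoke Lemma~\ref{lem.trivial} in the end, so the argument is sound, but the exposition should make clear that no composition of the non-transitive relation is being used. With that clarified, your proposal is a correct reconstruction of the paper's proof.
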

\begin{proof} Let $A \embed{M} B$.
Take a projection $p \in B^n$, a non-zero partial isometry $v \in 1_A (\M_{1,n}(\C) \ot M)p$ and a unital $^*$-homomorphism $\psi : A \recht p B^n p$ satisfying $a v = v \psi(a)$ for all $a \in A$. Set $p_1 = v^* v \in p M^n p \cap \psi(A)'$ and $D = p_1 (p M^n p \cap \psi(A)')p_1$. Since $v D v^* \subset 1_A M 1_A \cap A'$ and $v^* v = 1_D$, we have $D \embed{M} 1_A M 1_A \cap A'$. By Lemma \ref{lem.trivial}, we have
$$p M^n p \cap \psi(A)' \embed{M} 1_A M 1_A \cap A' \; .$$
But, $p$ commutes with $1 \ot (1_B M 1_B \cap B')$ and $p (1 \ot (1_B M 1_B \cap B'))$ is a unital von Neumann subalgebra of $p M^n p \cap \psi(A)'$. It follows that
$$p (1 \ot (1_B M 1_B \cap B')) \embed{M} 1_A M 1_A \cap A' \; .$$
Yet another application of Lemma \ref{lem.trivial} yields the conclusion.
\end{proof}

\begin{remark}
The relation $\embed{M}$ between von Neumann subalgebras of $(M,\tau)$ is \emph{not transitive}. This is quite obvious~: let $(M,\tau)$ be a II$_1$ factor and $p \in M$ a non-trivial projection. Then, $M \embed{M} (pMp + \C (1-p))$ and $(pMp + \C (1-p)) \embed{M} \C 1$, but clearly $M \notembed{M} \C 1$.
Nevertheless, we have the following results.
\end{remark}

\begin{lemma} \label{lem.compose-full}
Let $A,B,D \subset (M,\tau)$ be possibly non-unital embeddings. If $A \embed{M} B$ and $B \fembed{M} D$, then $A \embed{M} D$.
\end{lemma}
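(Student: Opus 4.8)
The plan is to unwind the two hypotheses into the partial-isometry picture of Theorem \ref{thm.embed} and then glue the two intertwiners. From $A \embed{M} B$ we obtain a projection $p \in B^n$, a non-zero partial isometry $v \in 1_A (\M_{1,n}(\C) \ot M) p$, and a $^*$-homomorphism $\psi : A \recht p B^n p$ with $a v = v \psi(a)$ for all $a \in A$. The issue is that $B \fembed{M} D$ is a statement about $B$, not about $p B^n p$ or about the (a priori non-unital) corner $\psi(A) \subset p B^n p$; so the first real step is to upgrade $B \fembed{M} D$ to a statement we can apply to the range of $\psi$.

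First I would note that $\fembed{M}$ is stable under amplification: if $B \fembed{M} D$ then $B^n \fembed{M^n} D$ when we view $D \subset M \subset M^n$ in the obvious corner, since a non-zero projection in $(M^n)' \cap B^n$ decomposes along matrix units into pieces dominated by projections in $M' \cap B$, and for each such piece $Bq \embed{M} D$ gives (via Lemma \ref{lem.trivial}) the required embedding inside $M^n$. Next, since $p B^n p$ sits as a (non-unital) subalgebra of $B^n$ with support $p$, and $\fembed{}$ passes to corners — for any non-zero projection $q \in p(M^n)p \cap (pB^np)'$ one has $q \leq $ some projection in $(M^n)' \cap B^n$ restricted appropriately, and $B^n q \embed{} D$ — we get $p B^n p \fembed{M^n} D$. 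Now apply this with the projection $1_{\psi(A)} = p$ (as $\psi$ is assumed unital into $p B^n p$, so $\psi(A)' \cap pM^np$ contains $p$) together with the third bullet of Lemma \ref{lem.trivial}: since $\psi(A) \subset p B^n p$ is a unital subalgebra and $p B^n p \embed{M^n} D$ (which follows from the full version $\fembed{}$), we obtain $\psi(A) \embed{M^n} D$.

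Then I would re-expand $\psi(A) \embed{M^n} D$: there is a projection $q \in D^m$, a non-zero partial isometry $w \in p(\M_{1,m}(\C)\ot M^n)q$, and a unital $^*$-homomorphism $\rho : \psi(A) \recht q D^m q$ with $\psi(a) w = w \rho(\psi(a))$ for all $a \in A$. The composition $\rho \circ \psi : A \recht q D^m q$ is then a (possibly non-unital) $^*$-homomorphism, and setting $V := (\id \ot v)\,w$ — an appropriate product of partial isometries in $1_A(\M_{1,nm}(\C)\ot M) q$ after identifying $\M_{1,n}\ot\M_{1,m} \cong \M_{1,nm}$ — one computes $a V = (av)(\ldots) = v\psi(a)(\ldots) = v w \rho(\psi(a)) = V (\rho\circ\psi)(a)$. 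It remains only to check $V \neq 0$: this is where I would be most careful, since a product of non-zero partial isometries can vanish. The fix is standard: before composing, cut $w$ by a suitable spectral projection of $w^*w \in q M^m q \cap \rho(\psi(A))'$ so that $w^*w$ is under the central support of $v^*v$ relative to the action of $\psi(A)$; concretely, since $v^*v \in pM^np \cap \psi(A)'$ is non-zero and $\psi(A) \embed{} D$ is witnessed by $w$, one may choose $w$ from the outset with $ww^* \leq z$ where $z$ is the central support of $v^*v$ in $\psi(A)' \cap pM^np$ (replacing $D^m$ by a larger amplification if needed), guaranteeing $v w \neq 0$. Unwinding, $V$ is a non-zero partial isometry intertwining $A$ and $\rho\circ\psi$, so condition 2 of Theorem \ref{thm.embed} gives $A \embed{M} D$.

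The main obstacle, as flagged, is the nonvanishing of the composed partial isometry together with the bookkeeping needed to transfer the \emph{full} embedding $B \fembed{M} D$ through the amplification $B^n$ and down to the corner $p B^n p \supset \psi(A)$ — it is precisely to make this transfer legitimate that the ``f'' (full) hypothesis is needed rather than plain $\embed{}$, and getting the central-support argument exactly right is the only genuinely delicate point; everything else is routine application of Lemma \ref{lem.trivial} and Theorem \ref{thm.embed}.
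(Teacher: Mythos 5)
Your overall plan — glue the intertwiner from $A\embed{M}B$ with one coming from $B\fembed{M}D$, and use the fullness to make the product non-zero — is the right idea and is in fact what the paper does, but two of your intermediate steps are not justified and, as written, would not hold up.

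The first gap is the claim that $pB^np\fembed{M^n}D$ (or even $pB^np\embed{M^n}D$) follows formally from $B\fembed{M}D$ via the argument you give. You assert that a non-zero projection $q\in pM^np\cap(pB^np)'$ is dominated by a restriction of a projection in $(B^n)'\cap M^n$. This is false: the relative commutant $pM^np\cap(pB^np)'$ is generically much larger than $p\bigl((B^n)'\cap M^n\bigr)p$ when $p\in B^n$. A simple illustration: take $B=\M_2(\C)$ unitally inside the hyperfinite II$_1$ factor $M=R$ and $p=e_{11}\in B$; then $pBp=\C p$, so $pMp\cap(pBp)'=pMp$, whereas $p(B'\cap M)p$ is a proper corner of a copy of $R$. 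This failure of control on relative commutants when passing to corners is precisely the phenomenon behind the non-transitivity of $\embed{M}$, and it cannot be swept away by appeal to Lemma~\ref{lem.trivial}. (The conclusion $pB^np\embed{M^n}D$ is actually true, but its proof already requires the explicit construction of $w$ with large left support that I describe below — not the abstract reduction you propose.)

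The second gap is the non-vanishing argument. Cutting $w$ so that $ww^*\leq z$, where $z$ is the central support of $v^*v$ in $\psi(A)'\cap pM^np$, does \emph{not} force $v^*v\cdot ww^*\neq 0$: if that relative commutant has a factor direct summand, $v^*v$ and $ww^*$ can be orthogonal even when both sit under the same central support. What you would actually need is $ww^*$ \emph{large} (say with $\tau(p-ww^*)<\tau(v^*v)$), and to arrange that you would need fullness of the embedding $\psi(A)\fembed{M^n}D$ — which you have not established and which does not follow from $B\fembed{M}D$ alone, since it again concerns the uncontrolled relative commutant $\psi(A)'\cap pM^np$. Re-expanding the abstract statement $\psi(A)\embed{M^n}D$ into a fresh, unrelated intertwiner $w'$ loses exactly the handle you need.

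The paper avoids both issues by never passing through $\psi(A)\embed{M^n}D$. It keeps $\psi:A\to B^n$ (no corner needed) with intertwiner $v\in\M_{1,n}(\C)\ot 1_AM1_B$, keeps a concrete $\vphi:B\to D^m$ with intertwiner $w\in\M_{1,m}(\C)\ot1_BM1_D$, and uses $B\fembed{M}D$ to enlarge $w$: whenever $ww^*\neq 1_B$, the projection $1_B-ww^*$ lies in $1_BM1_B\cap B'$, so $B(1_B-ww^*)\embed{M}D$ yields another piece that can be adjoined. Iterating, one takes $\tau(1_B-ww^*)$ as small as desired. Since $v^*v\leq 1_n\ot 1_B$, this makes $v(1\ot ww^*)v^*$ close in trace to $vv^*\neq 0$, hence $v(1\ot w)\neq 0$; and the identity $a\,v(1\ot w)=v(1\ot w)\,(\id\ot\vphi)\psi(a)$ finishes the proof. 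If you trace through your own argument concretely rather than abstractly, you will find you are forced into exactly this construction — the abstraction is what introduced the gaps.
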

\begin{proof}
Take, possibly non-unital, embeddings $\psi : A \recht B^n$ and $\vphi : B \recht D^m$ together with non-zero partial isometries $v \in \M_{1,n}(\C) \ot 1_A M 1_B$ and $w \in \M_{1,m}(\C) \ot 1_B M 1_D$ satisfying $a v = v \psi(a)$ for all $a \in A$ and $b w = w \vphi(b)$ for all $b \in B$. Because $B \fembed{M} D$, we can take $ww^* \in 1_B M 1_B \cap B'$ arbitrarily close to $1_B$. In particular, we can take $w$ in such a way that $v(1 \ot w) \neq 0$. Since $a v(1 \ot w) = v(1 \ot w) (\id \ot \vphi)\psi(a)$ for all $a \in A$, we are done.
\end{proof}

\begin{remark} \label{rem.composing}
Let $P,B \subset (M,\tau)$ and $A \subset B$ be possibly non-unital inclusions.

Suppose first that our aim is to prove that $P \embed{M} A$. Although the relation $\embed{M}$ is not transitive, we can nevertheless proceed in a two-step procedure. First prove that $P \embed{M} B$. Take a unital $^*$-homomorphism $\psi : P \recht p B^n p$ and a non-zero partial isometry $v \in \M_{1,n}(\C) \ot 1_P M 1_B$ satisfying $a v = v\psi(a)$ for all $a \in P$. Moreover, we can assume that $p$ equals the support projection of $E_B(v^* v)$. In a second step, prove that $\psi(P) \embed{B} A$. This yields a possibly non-unital $^*$-homomorphism $\vphi : P \recht A^m$ and a non-zero partial isometry $w \in p(\M_{n,m}(\C) \ot B1_A)$ satisfying $\psi(a) w = w \vphi(a)$ for all $a \in P$. We have then shown that $P \embed{M} A$, since $vw \neq 0$~: if $vw$ would be $0$, also $E_B(v^*v) w = E_B(v^*v w) = 0$, implying that $w = p w = 0$, a contradiction.

Secondly, we deduce from the previous paragraph the following precise statement~: if $P \embed{M} B$ and $P \notembed{M} A$, we can take a unital $^*$-homomorphism $\psi : P \recht p B^n p$ and a non-zero partial isometry $v \in \M_{1,n}(\C) \ot 1_P M 1_B$ satisfying $a v = v\psi(a)$ for all $a \in P$ and moreover satisfying $\psi(P) \notembed{B} A$.
\end{remark}

\subsection*{Intertwining by bimodules and inclusions of essentially finite index}

If $N \subset M$ is a subfactor of a II$_1$ factor, the \emph{Jones index} $[M:N]$ is defined as $[M:N]:= \dim(\rL^2(M)_N)$. We say that the subfactor $N \subset M$ is \emph{essentially of finite index}, if there exists a sequence of projections $p_n \in N' \cap M$ such that $p_n \recht 1$ and $N p_n \subset p_n M p_n$ has finite Jones index for all $n$. In Proposition \ref{prop.essential} in the Appendix, we define and characterize essentially finite index inclusions of arbitrary tracial von Neumann algebras.

We collect in this subsection several general results about the notions $\embed{M}$, $\fembed{M}$ and (essentially) finite index inclusions.

\begin{lemma} \label{lem.embed-finite-index}
Let $N,B \subset (M,\tau)$ be possibly non-unital embeddings. Let $A \subset N$ be a unital embedding that is essentially of finite index.
\begin{itemize}
\item If $A \embed{M} B$, then $N \embed{M} B$.
\item If $B \embed{M} N$, then $B \embed{M} A$.
\end{itemize}
\end{lemma}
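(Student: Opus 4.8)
The key structural fact I would use is that an essentially finite index inclusion $A \subset N$ gives, for each $\varepsilon>0$, a projection $p \in A'\cap N$ with $\|p-1\|_2$ small such that $Ap \subset pNp$ has finite Jones index. Finite Jones index of $Ap \subset pNp$ means exactly that $\rL^2(pNp)$, viewed as a right $Ap$-module, has finite dimension; equivalently, there is a finite "Pimsner--Popa basis'', i.e.\ elements $y_1,\ldots,y_k \in pNp$ with $pNp = \sum_{i} y_i\, Ap$ (after cutting by $p$ this is the standard consequence of finite index, via the conditional expectation $E_{Ap}$ being "bounded below'' on $pNp$). I would first record this as the only input needed; everything else is a bookkeeping exercise with Definition \ref{def.embeds} and the characterizations in Theorem \ref{thm.embed}.

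For the first bullet, suppose $A \embed{M} B$ but, for contradiction, $N \notembed{M} B$. By Theorem \ref{thm.embed}(3) applied to $N,B \subset M$, there is a sequence of unitaries $(v_n)$ in $N$ with $\|E_B(x^* v_n y)\|_2 \to 0$ for all $x,y \in 1_N M 1_B$. The problem is that $A \subset N$ need not be unital-of-finite-index, only essentially so, and unitaries of $N$ are not unitaries of $A$. I would get around this exactly as in Remark \ref{rem.embed-a-lot}: fix $\varepsilon>0$ and a projection $p\in A'\cap N$ with $Ap\subset pNp$ finite index and $\tau(1-p)$ small, with Pimsner--Popa basis $(y_i)_{i=1}^k$ for $pNp$ over $Ap$. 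Then the $v_n$ can be replaced, up to a controlled error, by their compressions $p v_n p \in pNp$, which can be expanded as $\sum_i y_i\, a_i^{(n)}$ with $a_i^{(n)}\in Ap$; and one checks that if $\|E_B(x^* w y)\|_2\to 0$ for all $x,y$ when $w$ ranges over a net exhausting the $p v_n p$'s, then one can also produce unitaries $u_n$ \emph{in $Ap$} with the same property, using that the $a_i^{(n)}$ form a "large'' piece of $p v_n p$ and that finitely many fixed $y_i$'s only contribute finitely many fixed $x\mapsto x y_i$ substitutions. This contradicts $Ap \embed{M} B$, which follows from $A\embed{M}B$ together with Lemma \ref{lem.trivial} (third bullet, since $Ap = q_1 A$ with $q_1 = p$ a projection in $1_A M 1_A \cap A'$; strictly $A\embed{M}B \Rightarrow Ap\embed{M}B$ needs the version of Lemma \ref{lem.trivial} for compressions by central-in-$M$-relative-commutant projections, which is exactly its first bullet read in reverse — here I'd instead argue directly that a finite-dimensional $A$-$B$-subbimodule of $1_A\rL^2(M)1_B$ remains nonzero after compressing the left action by $p$ for $p$ close enough to $1$).

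For the second bullet, assume $B \embed{M} N$; I must show $B \embed{M} A$. Use Remark \ref{rem.composing}: it suffices to prove $\psi(B) \embed{N} A$ for the $^*$-homomorphism $\psi : B \to q N^n q$ coming from $B\embed{M}N$. So the statement reduces to the \emph{internal} claim inside $N$: any unital embedding $A\subset N$ that is essentially of finite index satisfies $P \embed{N} A$ for every von Neumann subalgebra $P \subset q N^n q$ — equivalently, $qN^nq \embed{N} A$, i.e.\ $N \embed{N} A$ inside $N$ (amplifications being harmless). But $N \embed{N} A$ is immediate from essential finite index: picking $p\in A'\cap N$ with $\tau(p)>0$ and $Ap\subset pNp$ of finite index, the Hilbert space $\rL^2(pNp)$ is a nonzero $A$-$A$-subbimodule (via $ap$ on both sides) of $1_A\rL^2(N)1_A$ with finite right $A$-dimension, witnessing $A p \embed{N} A$ and hence, by Lemma \ref{lem.trivial}, $N \embed{N} A$. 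Feeding this back through Remark \ref{rem.composing} gives $B\embed{M}A$.

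The main obstacle is the first bullet: transporting the "no sequence of unitaries'' property from unitaries of $N$ to unitaries of the non-unital, merely-essentially-finite-index subalgebra $A$. The honest way to handle it is the Pimsner--Popa basis expansion described above combined with the trick in Remark \ref{rem.embed-a-lot} for absorbing the finitely many basis elements $y_i$ into enlarged families of test vectors $x,y$; the second bullet and the finite-index-implies-$\embed{}$ direction are routine.
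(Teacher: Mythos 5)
Your second bullet is essentially the paper's argument: reduce via Remark~\ref{rem.composing} to showing $\psi(B)\embed{N}A$, and obtain this from essential finite index of $A\subset N$. The small gloss ``$qN^nq\embed{N}A$, i.e.\ $N\embed{N}A$, amplifications being harmless'' hides a genuine (though routine) point — one must produce a nonzero $\psi(B)$-$A$-bimodule of finite right $A$-dimension inside $q(\M_{n,1}(\C)\ot\rL^2(N))$ by cutting with a projection $p\in N\cap A'$ close to $1$ and checking $q(1\ot p)\neq0$ — but the paper is equally terse here, so this is fine.

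Your first bullet, however, has a real gap, and it comes from choosing the contrapositive route rather than the direct one. You assume $N\notembed{M}B$, get unitaries $v_n\in N$ with $\|E_B(x^*v_ny)\|_2\to0$, and then try to produce unitaries $u_n\in Ap$ with the same vanishing in order to contradict $A\embed{M}B$. This transport does not go through. Expanding $pv_np=\sum_i y_ia_i^{(n)}$ with $a_i^{(n)}\in Ap$, the vanishing of $\|E_B(x^*\,pv_np\,y)\|_2=\|\sum_i E_B((y_i^*x)^*a_i^{(n)}y)\|_2$ for fixed $x,y$ gives no control on the individual terms: the sum can be small through cancellation even if each $E_B((y_i^*x)^*a_i^{(n)}y)$ is large. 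Even if one could isolate the $a_i^{(n)}$, these are elements of $Ap$, not unitaries, and passing from ``some bounded elements of $A$ with $\|\cdot\|_2$ bounded below mix away'' to ``unitaries of $A$ mix away'' requires a separate argument you do not give. The ``one checks that\dots'' step is precisely where the proof breaks.

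The paper instead argues directly, which is both cleaner and correct: $A\embed{M}B$ yields $\psi:A\to B^n$ and a partial isometry $v$ with $av=v\psi(a)$; then, using the Pimsner--Popa description of essential finite index, one takes $\vphi:N\to A^m$ and a nonzero partial isometry $w\in\M_{1,m}(\C)\ot Nq$ with $xw=w\vphi(x)$, where $q\leq p$ is chosen below the support of $E_N(vv^*)$. Composing, $xw(1\ot v)=w(1\ot v)(\id\ot\psi)\vphi(x)$, and the support choice forces $w(1\ot v)\neq0$, giving $N\embed{M}B$ by Theorem~\ref{thm.embed}(2). In short: embed $N$ into a matrix amplification of $A$ on a large corner, then push through the $A$-into-$B$ intertwiner; do not try to pull back a mixing sequence from $N$ to $A$.
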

\begin{proof}
Let $A \embed{M} B$. Take a, possibly non-unital, $^*$-homomorphism $\psi : A \recht B^n$ and a non-zero partial isometry $v \in \M_{1,n}(\C) \ot 1_A M 1_B$ satisfying $a v = v \psi(a)$ for all $a \in A$. Note that $vv^* \in 1_A M 1_A \cap A'$ and that $1_A = 1_N$. Let $p \in N \cap A'$ denote the support projection of $E_N(vv^*)$. Take $q \leq p$, a non-zero projection in $N \cap A'$ such that $\rL^2(Nq)$ is finitely generated as a right $A$-module. So, there exists a possibly non-unital $^*$-homomorphism $\vphi : N \recht A^m$ and a non-zero partial isometry $w \in \M_{1,m}(\C) \ot Nq$ satisfying $x w = w \vphi(x)$ for all $x \in N$.

We claim that $w (1 \ot v) \neq 0$. Once the claim is proven, the equality $x w (1 \ot v) = w (1 \ot v) (\id \ot \psi)\vphi(x)$ for all $x \in N$, implies that $N \embed{M} B$. Suppose that $w(1 \ot v) = 0$. Then, $w(1 \ot E_N(vv^*)) = E_N(w (1 \ot vv^*)) = 0$, implying that $w(1 \ot p) = 0$. Since $w \in \M_{1,m}(\C) \ot Nq$, this would imply that $w = 0$, contradiction.

Since $A \subset N$ has essentially finite index, we have $N \embed{N} A$ and so Remark \ref{rem.composing} yields the second statement.
\end{proof}

The most subtle abstract result on intertwining bimodules and (essentially) finite index inclusions that we need is Theorem \ref{thm.intertwine-finite-index} below. We first introduce the following notation.

\begin{notation} \label{not.embedmodule}
Let $(N,\tau)$ and $(M,\tau)$ be tracial von Neumann algebras and $A \subset N, B \subset M$ von Neumann subalgebras. Let $_N H_M$ be an $N$-$M$-bimodule.
\begin{itemize}
\item We set $A \embed{H} B$ if $H$ contains a non-zero $A$-$B$-subbimodule $K \subset H$ with $\dim(K_B) < \infty$.
\item We set $A \fembed{H} B$ if every non-zero $A$-$M$-subbimodule $K \subset H$ satisfies $A \embed{K} B$.
\end{itemize}
Note that if $\dim(H_M) < \infty$ and if we write $_N H_M \cong \, _{\psi(N)} p(\ell^2(\N) \ot \rL^2(M))_M$ for some $^*$-homo\-mor\-phism $\psi : N \recht p M^\infty p$ and a projection $p \in M^\infty$ satisfying $(\Tr \ot \tau)(p) < \infty$, then
\begin{itemize}
\item $A \embed{H} B$ if and only if $\psi(A) \embed{M} B$,
\item $A \fembed{H} B$ if and only if $\psi(A) \fembed{M} B$.
\end{itemize}
\end{notation}

\begin{theorem} \label{thm.intertwine-finite-index}
Let $(N,\tau)$ and $(M,\tau)$ be tracial von Neumann algebras with von Neumann subalgebras $A \subset N$ and $B \subset M$. Assume that
\begin{itemize}
\item every $A$-$A$-subbimodule $K \subset \rL^2(N)$ satisfying $\dim(K_A) < \infty$ is included in $\rL^2(A)$,
\item every $B$-$B$-subbimodule $K \subset \rL^2(M)$ satisfying $\dim(K_B) < \infty$ is included in $\rL^2(B)$.
\end{itemize}
Suppose that $_N H_M$ is a finite index $N$-$M$-bimodule such that, using Notation \ref{not.embedmodule},
$$A \fembed{H} B \quad\text{and}\quad A \fembedr{H} B \; .$$
Then there exists a projection $p \in B^\infty$ with $(\Tr \ot \tau)(p) < \infty$ and a $^*$-homomorphism $\vphi : N \recht p M^\infty p$ such that
\begin{align}
\bullet \quad & _N H_M \cong \; _{\vphi(N)} p (\ell^2(\N) \ot \rL^2(M))_M \; , \label{eq.isom} \\
\bullet \quad & \vphi(A) \subset p B^\infty p \;\;\text{and this inclusion has essentially finite index.}\notag
\end{align}
Moreover, through the isomorphism \eqref{eq.isom}, $p (\ell^2(\N) \ot \rL^2(B))$ is the smallest $A$-$B$-subbimodule of $H$ that contains every $A$-$B$-subbimodule $K$ satisfying $\dim(K_B) < \infty$ or $\dim( _A K) < \infty$.
\end{theorem}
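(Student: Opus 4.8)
The plan is to exploit the symmetry between $N$ and $M$ and reduce everything to a statement about the two subalgebras $A\subset N$ and $B\subset M$ sitting inside the bimodule $_NH_M$. Since $\dim(H_M)<\infty$, I would write $_NH_M\cong {}_{\psi(N)}p(\ell^2(\N)\ot\rL^2(M))_M$ for some $^*$-homomorphism $\psi:N\recht pM^\infty p$ with $(\Tr\ot\tau)(p)<\infty$, as in Notation \ref{not.embedmodule}. The hypotheses then read $\psi(A)\fembed{M}B$ and (using finiteness of $\dim({}_NH)$ and the analogous picture from the other side) the dual statement on the left. The goal is to massage $\psi$, by conjugating with a partial isometry and cutting with a central projection, into a $\vphi$ whose restriction to $A$ lands inside $pB^\infty p$ with essentially finite index, and to identify $p(\ell^2(\N)\ot\rL^2(B))$ as the span of all finite-dimensional $A$-$B$-subbimodules.

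First I would apply Theorem \ref{thm.embed}(2) to $\psi(A)\embed{M}B$ to produce a non-zero partial isometry $v\in\M_{1,n}(\C)\ot M$ and a $^*$-homomorphism $\rho:\psi(A)\recht qB^nq$ with $\psi(a)v=v\rho(a)$. Because we have the \emph{full} embedding $\psi(A)\fembed{M}B$, a maximality/exhaustion argument over mutually orthogonal such partial isometries lets me assume $vv^*$ is as close to the relevant support projection as needed; a standard gluing of countably many such pieces (the same device as in Remark \ref{rem.embed-a-lot}) upgrades this to a single unitary-like intertwiner after enlarging the matrix amplification. Conjugating $\psi$ by this (partial) isometry replaces $_NH_M$ by an isomorphic copy in which $\psi(A)\subset B^\infty$ genuinely, and the unital part of this inclusion is what I will call the new $\vphi$ after cutting by the appropriate projection $p\in B^\infty$ with $(\Tr\ot\tau)(p)<\infty$. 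At this stage I would invoke the second bulleted hypothesis — that every finite-index $B$-$B$-subbimodule of $\rL^2(M)$ sits inside $\rL^2(B)$ — to guarantee that the intertwiner cannot "leak" out of $B^\infty$: any piece landing in the orthogonal complement of $p(\ell^2(\N)\ot\rL^2(B))$ would give a finite-index $A$-$B$-bimodule avoiding $\rL^2(B)$, hence (pushing through $B$-bimodularity) a finite-index $B$-$B$-subbimodule of $\rL^2(M)$ outside $\rL^2(B)$, contradiction.

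To get the \emph{essentially finite index} of $\vphi(A)\subset pB^\infty p$, I would run the same argument with the roles of the two sides interchanged: the hypothesis $A\fembedr{H}B$, together with the picture of $_NH_M$ coming from the left module structure and the first bulleted hypothesis (about $A$-$A$-subbimodules of $\rL^2(N)$), produces a non-zero $B$-$A$-intertwiner, i.e.\ $B\embed{\overline H}A$ in the appropriate amplified sense. Feeding both one-sided intertwiners into Proposition \ref{prop.essential} (the characterization of essentially finite index inclusions from the Appendix) yields that $\vphi(A)\subset pB^\infty p$ is essentially of finite index. The last sentence of the theorem — that $p(\ell^2(\N)\ot\rL^2(B))$ is the smallest $A$-$B$-subbimodule containing all $K$ with $\dim(K_B)<\infty$ or $\dim({}_AK)<\infty$ — then follows by combining minimality (any such $K$ must, by Lemma \ref{lem.commutant} and the two bulleted hypotheses, be carried into $p(\ell^2(\N)\ot\rL^2(B))$ under the isomorphism) with the fact that $p(\ell^2(\N)\ot\rL^2(B))$ itself is visibly a sum of finite-index $A$-$B$-bimodules, since $\vphi(A)\subset pB^\infty p$ has essentially finite index.

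The main obstacle I anticipate is the passage from the \emph{many small} partial-isometry intertwiners provided by $\fembed{M}$ to a \emph{single, essentially unital} one: one must control simultaneously the support projections on the $N$-side and the $B$-side while amplifying matrix sizes, and one has to check that the limiting $^*$-homomorphism $\vphi$ is still well-defined and that the central cutdown by $p$ does not destroy the finite-index bimodule $_NH_M$. This is exactly the kind of bookkeeping where the two bulleted hypotheses (forcing finite-index subbimodules of $\rL^2(N)$, resp.\ $\rL^2(M)$, back into $\rL^2(A)$, resp.\ $\rL^2(B)$) are indispensable: without them the intertwiner could spread over infinitely many non-equivalent pieces and no single $\vphi$ with the stated properties would exist.
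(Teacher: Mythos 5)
Your high-level outline (realize $H$ by a $^*$-homomorphism, produce intertwiners for $\psi(A)\embed{M}B$, glue them, use symmetry for essential finiteness, and identify $p(\ell^2(\N)\ot\rL^2(B))$ as the maximal "small" $A$-$B$-piece) does point in the same direction as the paper. But the three key steps are left as hand-waves, and in each case the gap is genuine and the paper's mechanism is substantively different from what you sketch.

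First, the gluing. You acknowledge that turning "many small partial-isometry intertwiners" into a single $v$ with $vv^*$ equal to the identity of the relevant corner is the crux, but you offer no actual device for it — Remark \ref{rem.embed-a-lot} is about a different situation (building a single sequence of unitaries witnessing simultaneous \emph{non}-embedding into countably many targets) and does not help glue intertwiners. What makes the gluing work in the paper is a prior structural reduction: using Lemma \ref{lem.index-relative-commutant}, one shows $\vphi(\cZ(A))\subset P:=\vphi(A)'\cap p_0M^\infty p_0$ has essentially finite index and is contained in $\cZ(P)$, and after cutting by a small central projection $z$ one can assume $\vphi(\cZ(A)z)\subset P\vphi(z)$ is homogeneous, i.e.\ isomorphic to $1\ot1\ot\cZ(A)z\subset\M_k(\C)\ot\C^l\ot\cZ(A)z$. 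The atomic projections $f_1,\dots,f_m$ of this relative commutant give the precise combinatorial skeleton along which the intertwiners supplied by $A\fembed{H}B$ can be inductively refined to a decreasing sequence of central projections $z_1\geq\dots\geq z_m$ in $\cZ(A)$ and partial isometries with $v_iv_i^*=f_i\vphi(z_i)$, whose sum over $i$ produces a single $v$ with $vv^*=\vphi(z_0)$. Without this reduction of the relative commutant, there is no reason the supports of your partial isometries can be arranged to exhaust $1$.

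Second, the "no leakage into $M^\infty\setminus B^\infty$" step. The issue is that, after intertwining, $q:=v^*v$ a priori only lives in $pM^np\cap\psi(A)'$, and one must prove $q\in pB^np$. Your sketch — that a piece in the orthogonal complement would "push through $B$-bimodularity" to a finite-index $B$-$B$-subbimodule of $\rL^2(M)$ outside $\rL^2(B)$ — is not an argument. The paper's argument is a quasi-normalizer argument, and it requires that one has \emph{already} established that $\psi(A)\subset pB^np$ is essentially of finite index: then $q$, commuting with $\psi(A)$, lies in $\QN_{pM^np}(pB^np)''$, and the second bulleted hypothesis is precisely what forces $\QN_{pM^np}(pB^np)''=pB^np$. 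So the conclusion $q\in B^n$ depends on the essentially-finite-index statement, which you derive afterwards — the order is reversed and the mechanism is different.

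Third, essential finiteness of $\vphi(A)\subset pB^\infty p$. You propose to "feed both one-sided intertwiners into Proposition \ref{prop.essential}." That is not how it goes: Proposition \ref{prop.essential} characterizes the notion once you have the data, it does not manufacture the data. The paper instead constructs, by the symmetric version of the claim, a dual family of central projections $y_m\in\cZ(B)$ and maximal $A$-$B$-subbimodules $L^m\subset Hy_m$ with $\dim({}_A L^m)<\infty$; the two maximality properties force $z_nL^m=K^ny_m$, hence $\dim({}_A(K^ny_m))<\infty$ for every $m$, which says exactly that $\psi(A)y_m\subset pB^kpy_m$ has finite index for every $m$ — that is what essential finite index means. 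Your Proposition \ref{prop.essential} route skips this matching argument and does not actually produce the required projections of trace close to $1$.

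In summary: the outline is plausible and the symmetry idea is right, but the three places where you wrote "exhaustion," "cannot leak," and "feed into Proposition \ref{prop.essential}" are precisely the three places where the proof has real content, and each would require a different argument from what you sketched — the central homogenization of $\vphi(\cZ(A))\subset P$, the quasi-normalizer argument (in the correct logical order), and the matching of dual maximal subbimodules $K^n,L^m$, respectively.
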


\begin{proof}
Write
\begin{equation}\label{eq.iso}
_N H_M \cong \; _{\vphi(N)} p_0(\ell^2(\N) \ot \rL^2(M))_M
\end{equation}
for some finite index inclusion $\vphi : N \recht p_0 M^\infty p_0$ and a projection $p_0 \in M^\infty$ satisfying $(\Tr \ot \tau)(p_0) < \infty$.

{\it Claim.} There exists a sequence of non-zero central projections $z_n \in \cZ(A)$ summing to $1$ and a sequence of $A$-$B$-subbimodules $K^n \subset z_n H$ satisfying
\begin{itemize}
\item $\dim((K^n)_B) < \infty$,
\item every $A$-$B$-subbimodule $K \subset z_n H$ satisfying $\dim( _A K) < \infty$ is included in $K^n$,
\item there exists a $^*$-homomorphism $\psi_n : A \recht p_n B^{k_n} p_n$ and a partial isometry $v_n \in p_0(\M_{\infty,k_n}(\C) \ovt M)$ satisfying $\vphi(a) v_n = v_n \psi_n(a)$ for all $a \in A$ and $v_n v_n^* = \vphi(z_n)$. So, writing $q_n = v_n^* v_n$, we get the isomorphism
$$ _A (z_n H)_M \cong \, _{\psi_n(A)} q_n (\M_{k_n,1}(\C) \ot \rL^2(M))_M \; .$$
Through this isomorphism, $K^n$ corresponds to $q_n (\M_{k_n,1}(\C) \ot \rL^2(B))$. Moreover, $p_n$ is the support projection of $E_B(q_n)$.
\end{itemize}

{\it Proof of the claim.} It is sufficient to take an arbitrary non-zero central projection $z \in \cZ(A)$ and to prove the existence of a non-zero subprojection $z_0 \in \cZ(A)$ and an $A$-$B$-subbimodule $K^0 \subset z_0 H$ satisfying the three conditions above. Write, with the notations of \eqref{eq.iso}, $P = \vphi(A)' \cap p_0 M^\infty p_0$. Since $A' \cap N = \cZ(A)$, it follows from Lemma \ref{lem.index-relative-commutant} that $\vphi(\cZ(A)) \subset P$ has essentially finite index. Moreover, $\vphi(\cZ(A)) \subset \cZ(P)$. Making $z$ smaller, we can then assume that the inclusion $\vphi(\cZ(A) z) \subset P \vphi(z)$ is isomorphic with $1 \ot 1 \ot \cZ(A) z \subset \M_k(\C) \ot \C^l \ot \cZ(A) z$. We retain the existence of a finite number of projections $f_1,\ldots,f_m \in P \vphi(z)$ summing to $\vphi(z)$ and satisfying $f_i P f_i = f_i \vphi(\cZ(A))$ and $f_i \vphi(a) \neq 0$ whenever $a \in \cZ(A) z$ is non-zero.

Since $A \fembed{H} B$, we inductively construct
\begin{itemize}
\item a decreasing sequence $z_1 \geq \cdots \geq z_m$ of non-zero projections in $\cZ(A) z$,
\item for every $i=1,\ldots,m$, projections $p_i \in B^{n_i}$ and $^*$-homomorphisms $\psi_i : A \recht p_i B^{n_i} p_i$,
\item partial isometries $v_i \in \M_{\infty,n_i}(\C) \ovt M$ satisfying $\vphi(a) v_i = v_i \psi_i(a)$ for all $a \in A$ and $v_i v_i^* = f_i \vphi(z_i)$.
\end{itemize}
Set $z_0 = z_m$. We can cut down every $v_i$ with $\vphi(z_0)$ and then take the direct sum of all $v_i$ and $\psi_i$ for $i=1,\ldots,m$. We have found a projection $p \in B^n$, a $^*$-homomorphism $\psi : A \recht p B^n p$ and a partial isometry $v \in \M_{\infty,n}(\C) \ovt M$ satisfying $\vphi(a) v = v \psi(a)$ for all $a \in A$ and $v v^* = \vphi(z_0)$. Set $q = v^* v \in p M^n p \cap \psi(A)'$. Since the support projection of $E_B(q)$ belongs to $pB^n p \cap \psi(A)'$, we may assume that $p$ equals the support projection of $E_B(q)$. Using $v$, we get that
$$_A (z_0 H)_M \cong \; _{\psi(A)} q(\M_{n,1}(\C) \ot \rL^2(M))_M \; .$$
Through this isomorphism, we define the $A$-$B$-subbimodule $K^0 \subset z_0 H$ as $q(\M_{n,1}(\C) \ot \rL^2(B))$. Clearly, $\dim((K^0)_B) < \infty$. Suppose now that $K \subset z_0 H$ is an $A$-$B$-subbimodule satisfying \linebreak $\dim( _A K) < \infty$. View $K \subset q(\M_{n,1}(\C) \ot \rL^2(M))$. We can densely span $K$ by the components of vectors $\xi \in \M_{m,1}(\C) \ot q(\M_{n,1}(\C) \ot \rL^2(M))$ satisfying $\xi B \subset (\M_m(\C) \ot \psi(A))\xi \subset B^{mn} \xi$. By our assumptions, it follows that all the components of $\xi$ belong to $\rL^2(B)$ and hence $K \subset K^0$, proving the claim.

Let $z_n$ and $K^n$ be as in the claim above. By symmetry, there also exists a sequence of non-zero central projections $y_n \in \cZ(B)$ summing to $1$ and a sequence of $A$-$B$-subbimodules $L^n \subset H y_n$ satisfying
\begin{itemize}
\item $\dim ( _A (L^n)) < \infty$,
\item every $A$-$B$-subbimodule $L \subset H y_n$ satisfying $\dim(L_B) < \infty$ is included in $L^n$.
\end{itemize}
It follows that $z_n L^m \subset K^n y_m$ and $K^n y_m \subset z_n L^m$. So, $z_n L^m = K^n y_m$ for all $n,m$. In particular, $\dim( _A (K^n y_m)) < \infty$ for all $n,m$.

Fix $n$ and consider again $K^n$~: we have a projection $p \in B^k$, a $^*$-homomorphism $\psi : A \recht p B^k p$ and a partial isometry $v \in \M_{\infty,k}(\C) \ovt M$ satisfying $\vphi(a) v = v \psi(a)$ for all $a \in A$ and $v v^* = \vphi(z_n)$. Moreover $q = v^* v \in p M^k p \cap \psi(A)'$ and $p$ is the support projection of $E_B(q)$. But now we know that $\dim( _A (K^n y_m)) < \infty$ for all $m$, meaning that $\psi(A) y_m \subset p B^k p y_m$ is an inclusion of finite index. Hence, $\psi(A) \subset p B^k p$ is essentially of finite index. Combining this with the fact that $q$ commutes with $\psi(A)$, we conclude that $q$ belongs to the quasi-normalizer of $pB^k p$ inside $p M^k p$, which equals $p B^k p$. So, $q \in pB^k p$. Since $p$ is the support projection of $E_B(q)$, we get $p = q$.

Summing up all the partial isometries $v$ corresponding to the central projections $z_n$, we find the partial isometry $v \in M^\infty$ satisfying $vv^* = p_0$ (with $p_0$ as in \eqref{eq.iso}) and further satisfying $p : = v^* v \in B^\infty$ with $v^* \psi(A) v \subset p B^\infty p$ and the latter being an inclusion of essentially finite index.
\end{proof}

\section{Controlling quasi-normalizers and relative commutants} \label{sec.control}

In order to deduce unitary conjugacy $u A u^* \subset B$ of von Neumann subalgebras $A,B \subset (M,\tau)$, out of the weaker property $A \embed{M} B$, the main problem is to control the projection $v^* v$ where $v$ is given by \ref{thm.embed}.2. This projection $v^* v$ belongs to the \emph{relative commutant} of $\psi(A)$ inside $\psi(1) M^n \psi(1)$, but we have no a priori knowledge about the position of $\psi(A)$ inside $B^n$. In Section~3 of \cite{P1}, Popa proved a crucial result giving control on such relative commutants by using mixing properties. The main observation is contained in Lemma \ref{lem.easy-new} below. Since the exact form of the lemma as we need it here, is not available in the literature, we give a complete proof for the convenience of the reader.

In Lemma \ref{lem.control}, we then show how to use in a concrete setting, the basic principle provided by Lemma \ref{lem.easy-new}.

\begin{lemma}\label{lem.easy-new}
Let $B \subset (M,\tau)$ and $H \subset \rL^2(M)$ a $B$-$B$-subbimodule. Denote by $e_B$ the orthogonal projection of $\rL^2(M)$ onto $\rL^2(B)$. Assume that $u_n$ is a sequence of unitaries in $B$ such that
$$\|e_B (xu_n \xi)\|_2 \recht 0 \quad\text{for all}\;\; x \in M, \xi \in H^\perp \; .$$
Then, every $B$-$B$-subbimodule $K$ of $\rL^2(M)$ satisfying $\dim(K_B) < \infty$, is contained in $H$. In particular, the quasi-normalizer $\QN_M(B)\dpr$ is contained in $H \cap H^*$.
\end{lemma}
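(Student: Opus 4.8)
Let $K \subset \rL^2(M)$ be a $B$-$B$-subbimodule with $\dim(K_B) < \infty$. The plan is to show that the component of any vector of $K$ lying in $H^\perp$ must be zero. First I would decompose $\rL^2(M) = H \oplus H^\perp$ as $B$-$B$-bimodules, and write $P$ for the orthogonal projection onto $H^\perp$; note $P$ commutes with both the left and right $B$-actions. It suffices to prove that $PK = 0$, since then $K \subset H$, and applying the same argument to $K^*$ (which also has $\dim(\cdot_B) < \infty$, being the image of $K$ under the $\ast$-operation, and noting that $H^*$ plays the symmetric role) gives $K \subset H \cap H^*$; finally $\QN_M(B)'' \subset H \cap H^*$ because $\rL^2(\QN_M(B)'')$ is the closure of the span of finite right $B$-modules inside $\rL^2(M)$ (each $x \in \QN_M(B)$ generates, together with $B$, a finitely generated right $B$-module $\overline{Bx + \sum_i x_i B}$, by the very definition of the quasi-normalizer).

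The core step is the following: suppose $\xi \in PK$ is non-zero. Since $\dim(K_B) < \infty$ and $P$ is $B$-$B$-bimodular, $PK$ is a $B$-$B$-subbimodule of $H^\perp$ with $\dim((PK)_B) < \infty$, so by the basic structure of finite right modules we may pick $\xi$ to be a nonzero "bounded vector": there is a constant $c$ with $\|\xi b\|_2 \le c \|b\|_2$ for all $b \in B$, equivalently $\langle \xi b_1, \xi b_2\rangle = \tau(b_2^* E_B(\xi^*\xi) b_1)$ for a bounded $E_B(\xi^*\xi) \in B$ — more precisely the right $B$-valued inner product $\langle\xi,\xi\rangle_B$ lies in $B$. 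Then for the unitaries $u_n \in B$ I compute, using that $u_n^* \xi u_n \in PK \subset H^\perp$ and $\xi \in H^\perp$,
\begin{align*}
\|\xi\|_2^2 \;=\; \langle \xi, \xi\rangle \;=\; \tau\bigl(u_n^* \langle\xi,\xi\rangle_B\, u_n\bigr) \;=\; \langle u_n^*\xi u_n,\, \xi\rangle \,,
\end{align*}
wait — this needs the left and right actions disentangled. Let me instead use: $\|\xi\|_2^2 = \langle u_n \xi, u_n \xi\rangle$ is constant in $n$, and I want to derive a contradiction by showing $\langle u_n \xi, u_n\xi\rangle \to 0$. For this, approximate $\xi$ in $\|\cdot\|_2$ by $x\eta$ with $x \in M$ and $\eta$ ranging over a suitable set; the right-$B$-boundedness lets me write $\xi = \sum_k x_k \cdot (\text{right-}B\text{ stuff})$ and reduce to the hypothesis $\|e_B(x u_n \zeta)\|_2 \to 0$ for $\zeta \in H^\perp$. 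Concretely: since $\xi \in K$ with $\dim(K_B)<\infty$ and $B u_n = u_n B$, the Jones projection identity gives $\|u_n \xi b\|$ controlled via $\|e_B(\cdots u_n \cdots)\|$, and pushing the $u_n$ past $\xi$ (it is right-$B$-bounded so $u_n\xi = \sum$ of things of the form $(\text{vector})\cdot b$) converts the hypothesis into decay of $\|u_n\xi\|$-type quantities.

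The step I expect to be the main obstacle is exactly this last reduction: making precise how the hypothesis, which is phrased for vectors $\xi \in H^\perp$ and elements $x \in M$ acting \emph{on the left}, interacts with a finite \emph{right} $B$-module $K$ sitting partly in $H^\perp$. The clean way is to realize $K_B$ concretely: write $K \subset \rL^2(M)$ as $K = \overline{\sum_{i=1}^{m} \xi_i B}$ for finitely many right-$B$-bounded vectors $\xi_i$, each of which can be taken of the form $\xi_i = a_i \hat 1$ with $a_i \in M$ (after enlarging $m$), using that bounded vectors in $\rL^2(M)$ are of this form. Then $P\xi_i = \xi_i - e_{\rL^2(H)}(\xi_i)$, and the contradiction with $\|\xi_i\|_2^2 = \lim_n \|u_n \xi_i\|_2^2 = \lim_n \|a_i u_n \hat 1\|_2^2$... — here I must be careful that $u_n \hat 1 \in \rL^2(B) \subset H$ need not be in $H^\perp$, so I should instead write $P\xi_i$, note $u_n (P\xi_i) \in H^\perp$, and express $u_n(P\xi_i)$ using that $P\xi_i$ is right-$B$-bounded: $u_n (P\xi_i)$ has the same right-$B$-bound, so its $\|\cdot\|_2$ is controlled by $\|e_B\bigl((P\xi_i)^* u_n^* \cdot u_n (P\xi_i)\bigr)\|$-type expressions, which by the hypothesis (applied with $x = (P\xi_i)^*$-induced multiplication, justified by boundedness) tends to $0$ — contradicting constancy of $\|u_n P\xi_i\|_2 = \|P\xi_i\|_2$. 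Once one vector of $PK$ is forced to $0$, the same applies to all, so $PK = 0$, completing the argument.
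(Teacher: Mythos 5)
Your overall structure — reduce to a finite right $B$-module $PK\subset H^\perp$ (using that $P$ is $B$-$B$-bimodular), realize it via bounded vectors, and try to get a contradiction from the hypothesis — is the right one, and your treatment of the quasi-normalizer from the first assertion is essentially correct. But the step you flag as the obstacle is indeed where the argument breaks, and the mechanism you propose cannot work. You aim to show $\|u_n(P\xi_i)\|_2\to 0$ and contradict $\|u_n(P\xi_i)\|_2=\|P\xi_i\|_2$; however, that equality is an exact identity valid for every $n$ (because $u_n$ is unitary), so nothing can force the left side to $0$. The hypothesis only controls $\|e_B(x\,u_n\,\zeta)\|_2$, and $\|e_B(\cdot)\|_2\le\|\cdot\|_2$ is a strict contraction: applying it with $x=(P\xi_i)^*$ gives at best $\|e_B(\zeta^*u_n\zeta)\|_2\to 0$, which is a statement about $\langle u_n\zeta,\zeta\rangle$-type pairings, not about $\|u_n\zeta\|_2$; moreover the expression you actually write, $e_B\bigl(\zeta^*u_n^*u_n\zeta\bigr)=e_B(\zeta^*\zeta)$, has the $u_n$'s cancelled before $e_B$ is applied and is constant. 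There is also a side overreach: you claim ``applying the same argument to $K^*$ gives $K\subset H\cap H^*$'', but $\dim((K^*)_B)=\dim(_BK)$, which is not assumed finite; for the quasi-normalizer it still works because $x\in\QN_M(B)$ forces both $\overline{BxB}$ and $\overline{Bx^*B}$ to have finite right $B$-dimension, but for a general $K$ the inclusion in $H^*$ does not follow.

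The missing ingredient is the $^*$-homomorphism $\psi:B\to B^k$ encoding the \emph{left} $B$-action on the finite right $B$-module. Any $K$ with $\dim(K_B)<\infty$ is densely spanned by the entries of row vectors $\xi\in\M_{1,k}(\C)\ot\rL^2(M)$ satisfying $b\xi=\xi\psi(b)$ for a (possibly non-unital) $^*$-homomorphism $\psi:B\to B^k$. Since $p_H$ commutes with both $B$-actions, $\eta:=\xi-p_H\xi$ still satisfies $b\eta=\eta\psi(b)$, and every entry of $\eta$ lies in $H^\perp$. Then for any $x\in M$ one computes
$$\|e_B(x\eta)\|_2=\|e_B(x\eta)\,\psi(u_n)\|_2=\|e_B\bigl(x\eta\,\psi(u_n)\bigr)\|_2=\|e_B(x\,u_n\,\eta)\|_2\longrightarrow 0 \; ,$$
where the first equality is that $\psi(u_n)$ is a unitary in the corner $\psi(1)B^k\psi(1)$ and $e_B(x\eta)=e_B(x\eta)\psi(1)$, the second is that $e_B$ is a $B$-$B$-bimodule map and $\psi(u_n)\in B^k$, and the third uses the bimodule relation to move $\psi(u_n)$ across to $u_n$ acting on the left. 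So $e_B(x\eta)=0$ for all $x\in M$; since the vectors $x^*\widehat b$, $x\in M$, $b\in B$, are total in $\rL^2(M)$, this forces $\eta=0$. Note the conclusion is not $\|u_n\eta\|_2\to 0$ (which is false when $\eta\neq 0$), but $e_B(x\eta)=0$ for all $x$ — that is what the hypothesis is set up to deliver, and it is this distinction your sketch does not make.
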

\begin{proof}
Let $\xi \in \M_{1,k}(\C) \ot \rL^2(M)$ and $\psi : B \recht B^k$ a possibly non-unital $^*$-homomorphism satisfying $b \xi = \xi \psi(b)$ for all $b \in B$. It is sufficient to prove that all entries of $\xi$ belong to $H$. Denote by $p_H$ the orthogonal projection of $\rL^2(M)$ onto $H$ and continue writing $e_B$ and $p_H$ for their respective componentwise extensions to matrices over $\rL^2(M)$.

We define $\eta = \xi - p_H(\xi)$ and we have to prove that $\eta = 0$.
Since $p_H$ commutes with the left and with the right action of $B$, we have $b \eta = \eta \psi(b)$ for all $b \in B$. Since all entries of $\eta$ belong to $H^\perp$, we get for all $x \in M$ that
$$\|e_B(x \eta)\|_2 = \|e_B(x \eta) \psi(u_n)\|_2 = \|e_B(x u_n \eta)\|_2 \recht 0 \; .$$
So, $e_B(x \eta) = 0$ for all $x \in M$, implying that $\eta = 0$.
\end{proof}

\begin{lemma} \label{lem.control}
Let $\Gamma \actson I$ be an action of a group $\Gamma$ on a set $I$ and consider $\Gamma \actson (X,\mu) = (X_0,\mu_0)^I$. Denote $M = \rL^\infty(X) \rtimes \Gamma$. Let $I_1 \subset I$ and set $\Norm I_1 = \{g \in \Gamma \mid g I_1 = I_1\}$.
Then, the following holds.
\begin{enumerate}
\item If $B \subset p \cL(\Stab I_1)^m p$ and if for all $i \in I \setminus I_1$,
$$B \notembed{\cL(\Stab I_1)} \; \cL(\Stab(I_1 \cup \{i\})) \; ,$$
then $\QN_{pM^m p}(B)\dpr \subset p (\rL^\infty(X_0^{I_1}) \rtimes \Norm I_1)^m p$.
\item If $B \subset p (\rL^\infty(X_0^{I_1}) \rtimes \Norm I_1)^m p$ and if for all $i \in I \setminus I_1$,
$$B \;\; \notembed{\rL^\infty(X_0^{I_1}) \rtimes \Norm I_1} \;\; \rL^\infty(X_0^{I_1}) \rtimes (\Norm(I_1) \cap \Stab i) \; ,$$
then, denoting $P = \rL^\infty(X) \rtimes \Norm I_1$, we have $\QN_{p P^m p}(B)\dpr \subset p (\rL^\infty(X_0^{I_1}) \rtimes \Norm I_1)^m p$.

Moreover, if $I \setminus I_1$ is infinite, the inclusion
$$\QN_{p M^m p}(B)\dpr \subset p M^m p$$
cannot have finite index.
\item If $B \subset p (\rL^\infty(X) \rtimes \Stab I_1)^m p$ and if for all $i \in I \setminus I_1$,
$$B \notembed{\rL^\infty(X) \rtimes \Stab I_1} \; \rL^\infty(X) \rtimes \Stab(I_1 \cup \{i\}) \; ,$$
then $\QN_{pM^m p}(B)\dpr \subset p (\rL^\infty(X) \rtimes \Norm I_1)^m p$.
\end{enumerate}
\end{lemma}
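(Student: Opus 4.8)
All three statements of Lemma \ref{lem.control} follow the same template: one wants to show that the quasi-normalizer of $B$ sits inside a smaller crossed product, obtained by replacing the acting group with a subgroup, and this is exactly the type of conclusion that Lemma \ref{lem.easy-new} delivers once one produces a suitable $B$-$B$-subbimodule $H \subset \rL^2$ of the ambient algebra and a sequence of unitaries $u_n \in B$ that pushes the orthogonal complement $H^\perp$ into the kernel of the conditional expectation $e_B$. So the plan is, for each of the three cases, to: (1) identify the right von Neumann subalgebra of the ambient algebra and the right $B$-$B$-bimodule $H$ sitting inside its $\rL^2$; (2) write $H^\perp$ as a direct sum of Fourier-type pieces indexed by group elements (or cosets) that are \emph{not} in the relevant subgroup; (3) use the hypothesis $B \notembed{} \cL(\Stab(I_1\cup\{i\}))$ (resp.\ the analogous non-embedding hypotheses) together with Theorem \ref{thm.embed}.3 — in the strengthened form of Remark \ref{rem.embed-a-lot}, so that a single sequence $(u_n)$ works simultaneously for all $i \in I\setminus I_1$ — to conclude $\|e_B(x u_n \xi)\|_2 \to 0$ for $\xi \in H^\perp$; (4) invoke Lemma \ref{lem.easy-new}.

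\textbf{Carrying out (1)–(2) in each case.} For part~1, take $M_0 = p\cL(\Stab I_1)^m p$ as the ambient algebra and let $H = \rL^2\bigl(p(\rL^\infty(X_0^{I_1}) \rtimes \Norm I_1)^m p\bigr)$. The complement $H^\perp$ is spanned by elements of the form $F u_g$ with either $g \in \Stab I_1 \setminus \Norm I_1$, or $g \in \Norm I_1$ but $F \in \rL^\infty(X)$ having a nonzero Fourier coefficient outside $\rL^\infty(X_0^{I_1})$, i.e.\ depending on some coordinate $i \in I\setminus I_1$. In the first type, since $g \in \Stab I_1$ does not normalize $I_1$, there is $i \in I_1$ with $gi \notin I_1$; for the second type there is directly a coordinate $i\notin I_1$ that $F$ depends on. Either way one produces, for each such basis vector $\xi$, an index $i \in I\setminus I_1$ such that $\xi$ lies in the orthogonal complement of $\rL^2(\cL(\Stab(I_1\cup\{i\})))$ inside $\rL^2(\cL(\Stab I_1))$ — this is the point where one checks that $\Stab(I_1\cup\{i\}) = \Stab I_1 \cap \Stab i$ and that the relevant Fourier support lies outside it. For part~2 one takes $P = \rL^\infty(X)\rtimes\Norm I_1$ as the ambient algebra and again $H = \rL^2\bigl((\rL^\infty(X_0^{I_1})\rtimes\Norm I_1)^m\bigr)$ (cut by $p$); now $H^\perp$ is controlled by coordinates $i\in I\setminus I_1$ and one uses the hypothesis $B\notembed{}\rL^\infty(X_0^{I_1})\rtimes(\Norm I_1\cap\Stab i)$. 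For part~3 one takes the ambient algebra $\rL^\infty(X)\rtimes\Stab I_1$ and the target $\rL^\infty(X)\rtimes\Norm I_1$; here the $\rL^\infty(X)$-part is unchanged, so $H^\perp$ is spanned purely by $F u_g$ with $g\in\Stab I_1\setminus\Norm I_1$, and for such $g$ one finds $i\in I_1$ with $gi\notin I_1$, hence $g\notin\Stab(I_1\cup\{i\})$ after relabeling — giving the non-embedding hypothesis $B\notembed{}\rL^\infty(X)\rtimes\Stab(I_1\cup\{i\})$ a chance to apply.

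\textbf{The ``moreover'' clause and the main obstacle.} The extra assertion in part~2, that $\QN_{pM^m p}(B)\dpr \subset pM^m p$ cannot have finite index when $I\setminus I_1$ is infinite, should follow by noting that by part~2 this quasi-normalizer is contained in $p(\rL^\infty(X_0^{I_1})\rtimes\Norm I_1)^m p$, and this subalgebra itself sits in $pM^m p$ with infinite index: morally, $\rL^\infty(X_0^{I\setminus I_1})$ is a ``free'' tensor factor of $M$ of infinite dimension that is entirely absent from $\rL^\infty(X_0^{I_1})\rtimes\Norm I_1$, and a finite-index subalgebra cannot omit such a piece — one can make this precise by a dimension/trace estimate or by exhibiting infinitely many orthogonal $\rL^2$-vectors in $M$ that are orthogonal to the subalgebra and its finitely many translates. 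The main obstacle I anticipate is bookkeeping, not depth: one must be careful about the non-unital embeddings (the projection $p$ and matrix amplifications $(-)^m$ floating around everywhere), about verifying that the sequence $(u_n)$ from Remark \ref{rem.embed-a-lot} indeed satisfies the precise hypothesis of Lemma \ref{lem.easy-new} for \emph{every} $x\in M$ and \emph{every} $\xi\in H^\perp$ (a density argument reducing to the spanning vectors $Fu_g$ is needed, and one should check the reduction survives the passage to the reduced/amplified algebras), and — this being the gap the referee caught — making sure that in each case the decomposition of $H^\perp$ really is covered by the \emph{stated} family of non-embedding hypotheses, with the correct identification of $\Stab(I_1\cup\{i\})$ versus $\Norm(I_1)\cap\Stab i$ in parts~1 versus~2.
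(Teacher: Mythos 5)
Your plan to invoke Lemma~\ref{lem.easy-new} in a single shot, with $H$ equal to $\rL^2$ of the final target algebra $p(\rL^\infty(X_0^{I_1})\rtimes\Norm I_1)^m p$, has a genuine gap that is not mere bookkeeping. Write $\cJ = \{g \in \Gamma \mid gI_1 \subset I_1\}$ and note $\Stab I_1 \subset \Norm I_1 \subset \cJ$, so the spanning set ``$g \in \Stab I_1 \setminus \Norm I_1$'' you describe is empty; the relevant set is $\Gamma \setminus \Norm I_1$. With your choice of $H$, the complement $H^\perp$ inside $\rL^2(pM^m p)$ contains vectors $F u_h$ with $h \in \cJ \setminus \Norm I_1$. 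For such $h$ one has $hI_1 \subsetneq I_1$, so there is no $i \in hI_1\setminus I_1$, and the key containment $\Stab I_1 \cap h(\Stab I_1)h^{-1} \subset \Stab(I_1 \cup \{i\})$ for some $i \in I\setminus I_1$ --- on which the estimate $\|E_{\cL(\Stab I_1)}(a u_g u_n b u_h)\|_2 \leq \|a\|_2\,\|b\|_2\,\|E_{\cL(\Stab(I_1\cup\{i\}))}(u_{k_0}u_n)\|_2$ rests --- is simply unavailable. (This subcase can only arise when $I_1$ is infinite, but the lemma does not assume $I_1$ finite, and in the applications $I_1$ is not yet known to be finite when the lemma is invoked.) The paper circumvents this by first taking the \emph{larger} $B$-$B$-subbimodule $H$ given by the closed linear span of the subspaces $\rL^\infty(X) u_g$, $g \in \cJ$: then $H^\perp$ only involves $u_h$ with $hI_1 \not\subset I_1$, Lemma~\ref{lem.easy-new} gives $\QN_{pM^mp}(B)\dpr \subset H \cap H^*$ (amplified and cut by $p$), and one computes $H \cap H^* = \rL^2(\rL^\infty(X)\rtimes\Norm I_1)$. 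A second, separate application of Lemma~\ref{lem.easy-new}, with $e_B$ dominated by $E_{\cL(\Gamma)}$, is then needed to refine the $\rL^\infty(X)$ part down to $\rL^\infty(X_0^{I_1})$. The same two-stage device is needed in part~3. Note also that the ambient algebra for the quasi-normalizer is $pM^m p$ throughout, not $p\cL(\Stab I_1)^m p$ or $\rL^\infty(X)\rtimes\Stab I_1$.

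For the ``moreover'' clause your sketch has a second gap: what part~2 controls is $\QN_{pP^m p}(B)\dpr$ where $P = \rL^\infty(X)\rtimes\Norm I_1$, \emph{not} $\QN_{pM^m p}(B)\dpr$, and the non-embedding hypotheses of part~2 are not those of part~1, so one cannot deduce $\QN_{pM^m p}(B)\dpr \subset p(\rL^\infty(X_0^{I_1})\rtimes\Norm I_1)^m p$. What the paper actually uses is $E_{pP^m p}\bigl(\QN_{pM^m p}(B)\dpr\bigr) = \QN_{pP^m p}(B)\dpr$, which by part~2 sits in $p(\rL^\infty(X_0^{I_1})\rtimes\Norm I_1)^m p$. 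Combined with the $\tau$-orthogonality of $\rL^\infty(X_0^{I\setminus I_1})$ to $\rL^\infty(X_0^{I_1})\rtimes\Norm I_1$, this gives $\tau(u^s x) = \delta_{s,0}\tau(x)$ for all $x \in \QN_{pM^m p}(B)\dpr$ and $u$ a Haar unitary in $\rL^\infty(X_0^{I\setminus I_1})$ (which exists precisely because $I\setminus I_1$ is infinite); the subspaces $u^s\QN_{pM^m p}(B)\dpr$ of $M^m p$ are then pairwise orthogonal, forcing $\rL^2(M^m p)$ to have infinite dimension over the quasi-normalizer.
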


Throughout the proof of the lemma, we regard $A \subset A^m$ so that $a \in A$ also denotes $1 \ot a \in \M_n(\C) \ot A$. Similarly, for a conditional expectation $E_B : A \recht B$, the notation $E_B$ also denotes the amplified conditional expectation $A^m \recht B^m$.

\begin{proof}[Proof of 1]
Set $D = \rL^\infty(X_0^{I_1}) \rtimes \Norm I_1$.
The assumptions and Remark \ref{rem.embed-a-lot} yield a sequence of unitaries $u_n \in B$ such that for all $i \in I \setminus I_1$ and $g,h \in \Stab(I_1)$,
$$\|E_{\cL(\Stab(I_1 \cup \{i\}))}(u_g u_n u_h) \|_2 \recht 0 \; .$$
Set $\cJ = \{g \in \Gamma \mid g I_1 \subset I_1 \}$ and define $H$ as the closed linear span of the subspaces $L^\infty(X) u_g$, $g \in \cJ$, of $\rL^2(M)$. Observe that $H$ is an $\cL(\Stab I_1)$-$\cL(\Stab I_1)$-subbimodule of $\rL^2(M)$. We claim that
$$\|E_{\cL(\Stab I_1)}(au_g u_n b u_h)\|_2 \recht 0$$
for all $a, b \in \rL^\infty(X), g \in \Gamma, h \in \Gamma \setminus \cJ$. Indeed, because $h \in \Gamma \setminus \cJ$, take $i \in h I_1 \setminus I_1$. Write $u_n = \sum_{k \in \Stab I_1} u_n(k) u_k$ where the $u_n(k)$ are scalar matrices. Hence,
$$\|E_{\cL(\Stab I_1)}(a u_g u_n b u_h)\|_2^2 = \sum_{k \in \Stab I_1 \cap g^{-1} (\Stab I_1) h^{-1}} |\tau(a \si_{gk}(b))|^2 \, \|u_n(k)\|_2^2 \; .$$
If at the right-hand side we are not summing over the empty set, take $k_0 \in \Stab I_1$ such that
$$\Stab I_1 \cap g^{-1} (\Stab I_1) h^{-1} = k_0^{-1} (\Stab I_1 \cap h (\Stab I_1) h^{-1}) \subset k_0^{-1} \Stab(I_1 \cup \{i\}) \; .$$
It follows that
$$\|E_{\cL(\Stab I_1)}(a u_g u_n b u_h)\|_2 \leq \|a\|_2 \; \|b\|_2 \; \|E_{\cL(\Stab(I_1 \cup \{i\}))}(u_{k_0} u_n)\|_2 \recht 0 \; .$$
So, the claim is proven. Observing that $H \cap H^* = \rL^2(L^\infty(X) \rtimes \Norm I_1)$, Lemma \ref{lem.easy-new} implies that $\QN_{pM^m p}(B)\dpr \subset p (L^\infty(X) \rtimes \Norm I_1)^m p$.

To conclude the proof of 1, it remains to show that the quasi-normalizer of $B$ inside $p(L^\infty(X) \rtimes \Norm I_1)^m p$ is contained in $p (L^\infty(X_0^{I_1}) \rtimes \Norm I_1)^m p$. Because $B \subset p \cL(\Gamma)^m p$ and using Lemma \ref{lem.easy-new}, it is sufficient to prove that $\|E_{\cL(\Gamma)}(a u_n b) \|_2 \recht 0$ whenever $a \in \rL^\infty(X)$ and $b \in \rL^\infty(X) \ominus \rL^\infty(X_0^{I_1})$. We may assume that $a \in \rL^\infty(X_0^J)$ for some finite set $J \subset I$ and that $b = b_1 b_2$ with $b_1 \in \rL^\infty(X_0^i) \ominus \C 1$ and $b_2 \in \rL^\infty(X_0^{I\setminus \{i\}})$ for some $i \in I \setminus I_1$. Observe that
$$\|E_{\cL(\Gamma)}(a u_n b) \|_2^2 = \sum_{k \in \Stab I_1} |\tau(a \si_k(b))|^2 \; \|u_n(k)\|_2^2 \; .$$
Whenever $k i \not\in J$, we have $\tau(a \si_k(b)) = 0$. We can take a finite number of elements $k_1,\ldots,k_r$ \linebreak in $\Stab I_1$ such that $k \in \Stab I_1$ and $k i \in J$ implies that $k \in \bigcup_{s=1}^r k_s^{-1} \Stab(I_1 \cup \{i\})$. It follows that
$$\|E_{\cL(\Gamma)}(a u_n b) \|_2^2 \leq \|a\|_2^2 \, \|b\|_2^2 \; \sum_{s=1}^r \|E_{\cL(\Stab(I_1 \cup \{i\}))}(u_{k_s} u_n)\|_2^2 \recht 0 \; .$$
This ends the proof of point~1.

{\it Proof of 2.}\ We still denote $D = \rL^\infty(X_0^{I_1}) \rtimes \Norm I_1$ and $P = \rL^\infty(X) \rtimes \Norm I_1$. The assumptions and Remark \ref{rem.embed-a-lot} yield a sequence of unitaries $u_n$ in $B$ satisfying
$$\|E_{\rL^\infty(X_0^{I_1}) \rtimes (\Norm I_1 \cap \Stab i)}(x u_n y)\|_2 \recht 0 \quad\text{for all}\;\; x,y \in D \; .$$
Because of Lemma \ref{lem.easy-new}, it is sufficient to prove that $\|E_D(x u_n y)\|_2 \recht 0$ for all $x \in P, y \in P \ominus D$. We may assume that $x \in \rL^\infty(X), y \in \rL^\infty(X) \ominus \rL^\infty(X_0^{I_1})$ and specify even more to $x \in \rL^\infty(X_0^J)$ and $y = a b$ with $a \in \rL^\infty(X_0^{i}) \ominus \C1$ and $b \in \rL^\infty(X_0^{I \setminus \{i\}})$ for some $i \in I \setminus I_1$ and some finite subset $J \subset I$. Set $u_n = \sum_{k \in \Norm I_1} u_n(k) u_k$, where $u_n(k) \in \rL^\infty(X_0^{I_1})^m$. Observe that
$$\|E_D(x u_n y)\|_2^2 = \sum_{k \in \Norm I_1} \|E_{\rL^\infty(X_0^{I_1})}(x u_n(k) \si_k(y))\|_2^2 \; .$$
For all $k \in \Norm I_1$, one has $k i \not\in I_1$. So,
$$E_{\rL^\infty(X_0^{I_1})}(x u_n(k) \si_k(y)) = 0 \quad\text{whenever}\;\; k \in \Norm I_1 \;\;\text{and}\;\; k i \not\in J \; .$$
We then take $k_1, \ldots, k_r \in \Norm I_1$ such that $$k \in \Norm I_1 \;\;\text{and}\;\; k i \in J \quad\text{implies that}\quad k \in \bigcup_{s=1}^r k_s^{-1} (\Norm I_1 \cap \Stab i) \; .$$ We finally conclude that
$$\|E_D(x u_n y)\|_2^2 \leq \|x\|^2 \; \|y\|^2 \; \sum_{s = 1}^r \|E_{\rL^\infty(X_0^{I_1}) \rtimes (\Norm I_1 \cap \Stab i)}(u_{k_s} u_n)\|_2^2 \recht 0 \; .$$
The last statement of point~2 can be proven as follows. Since $$E_{pP^m p}(\QN_{pM^m p}(B)\dpr) = \QN_{pN^m p}(B)\dpr \subset p D^m p \; ,$$ we conclude that for all $a \in \rL^\infty(X_0^{I \setminus I_1})$ and all $x \in \QN_{pM^m p}(B)\dpr$,
$$\tau(a x) = \tau(a E_P(x)) = \tau(a) \, \tau(E_P(x)) = \tau(a) \, \tau(x) \; .$$
If $I \setminus I_1$ is infinite, we can take a unitary $u \in \rL^\infty(X_0^{I \setminus I_1})$ satisfying $\tau(u^s) = \delta_{s,0}$ for all $s \in \Z$. It follows that the subspaces $u^s \QN_{pM^m p}(B)\dpr$ of $M^m p$ are orthogonal. So $\rL^2(M^m p)$ has dimension $\infty$ as a right $\QN_{pM^m p}(B)\dpr$-module.

{\it Proof of 3.}\ The assumptions and Remark \ref{rem.embed-a-lot} yield a sequence of unitaries $u_n \in B$ satisfying
$$\|E_{\rL^\infty(X) \rtimes \Stab(I_1 \cup \{i\})}(x u_n y)\|_2 \recht 0 \quad\text{for all}\;\; x,y \in \rL^\infty(X) \rtimes \Stab I_1 \; .$$
Proceeding as in the first part of the proof of point~1, it is sufficient to prove that $$\|E_{\rL^\infty(X) \rtimes \Stab I_1}(u_g u_n u_h)\|_2 \recht 0$$ whenever $g,h \in \Gamma$ and $h I_1 \not\subset I_1$. Let $i \in h I_1 \setminus I_1$. We write $u_n = \sum_{k \in \Stab I_1} u_n(k) u_k$ with $u_n(k) \in \rL^\infty(X)^m$. Observe that
$$\|E_{\rL^\infty(X) \rtimes \Stab I_1}(u_g u_n u_h)\|_2^2 = \sum_{k \in \Stab I_1 \cap g^{-1} (\Stab I_1) h^{-1}} \|u_n(k)\|_2^2 \; .$$
Again, if at the right-hand side we are not summing over the empty set, take $k_0 \in \Stab I_1$ such that
$$\Stab I_1 \cap g^{-1} (\Stab I_1) h^{-1} = k_0^{-1} (\Stab I_1 \cap h (\Stab I_1) h^{-1}) \subset k_0^{-1} \Stab(I_1 \cup \{i\}) \; .$$
It follows that
$$\|E_{\rL^\infty(X) \rtimes \Stab I_1}(u_g u_n u_h)\|_2 \leq \|E_{\rL^\infty(X) \rtimes \Stab(I_1 \cup \{i\})}(u_{k_0}u_n)\|_2 \recht 0 \; ,$$
concluding the proof of point~3.
\end{proof}

\begin{lemma} \label{lem.notembedCartan}
Let $\Gamma \actson I$ satisfy conditions (C2) and (C3) in Definition \ref{def.conditions}. Set $M = \rL^\infty(X_0^I) \rtimes \Gamma$. Let $\Lambda \actson (Y,\eta)$ be any probability measure preserving action and set $N = \rL^\infty(Y) \rtimes \Lambda$.

Suppose that $_N H_M$ is a finite index bimodule. If $B \subset \rL^\infty(Y)$ is diffuse, $H$ does not contain a non-zero $B$-$\cL(\Gamma)$-subbimodule $K$ satisfying $\dim (K_{\cL(\Gamma)}) < \infty$.
\end{lemma}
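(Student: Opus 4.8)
The strategy is to argue by contradiction using the mixing-type control provided by Lemma \ref{lem.easy-new} together with the minimal condition on stabilizers (C2) and the faithfulness condition (C3). Suppose that $H$ contains a non-zero $B$-$\cL(\Gamma)$-subbimodule $K$ with $\dim(K_{\cL(\Gamma)}) < \infty$, where $B \subset \rL^\infty(Y)$ is diffuse. Writing $_N H_M \cong \, _{\vphi(N)} p(\ell^2(\N) \ot \rL^2(M))_M$ for a finite index inclusion $\vphi : N \recht p M^\infty p$, this says exactly that $\vphi(B) \embed{M} \cL(\Gamma)$. So there is a possibly non-unital $^*$-homomorphism $\psi : B \recht p_0 \cL(\Gamma)^k p_0$ and a non-zero partial isometry $v \in \M_{1,k}(\C) \ot M^\infty$ (cut down appropriately) with $\vphi(b) v = v \psi(b)$ for all $b \in B$. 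The plan is to show that $\psi(B)$ must then embed into $\cL(\Stab F)$ for larger and larger finite sets $F \subset I$, producing a strictly decreasing sequence of stabilizers, contradicting (C2).

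First I would set up the induction: starting from $I_1 = \emptyset$ (so $\cL(\Stab I_1) = \cL(\Gamma)$), suppose at stage $n$ we have a finite set $F_n \subset I$ with $\psi(B) \embed{M} \cL(\Stab F_n)$. If for \emph{every} $i \in I \setminus F_n$ we had $\psi(B) \notembed{\cL(\Stab F_n)} \cL(\Stab(F_n \cup \{i\}))$, then Lemma \ref{lem.control}(1) would show that the quasi-normalizer $\QN(\psi(B))\dpr$ sits inside $\cL(\Stab F_n)$ amplified by $\rL^\infty(X_0^{F_n}) \rtimes \Norm F_n$; but here the key point is that $\vphi(B)$ has diffuse relative commutant coming from the diffuseness of $\rL^\infty(Y) \supset B$ and the finite index of $\vphi$, which — via Lemma \ref{lem.commutant} and the finite index of $_N H_M$ — should force a contradiction with condition (C3). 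So there must exist some $i_n \in I \setminus F_n$ with $\psi(B) \embed{\cL(\Stab F_n)} \cL(\Stab(F_n \cup \{i_n\}))$; composing via Lemma \ref{lem.compose-full} (or the two-step procedure of Remark \ref{rem.composing}, checking the fullness hypothesis using that $\cL(\Stab(F_n\cup\{i_n\})) \subset \cL(\Stab F_n)$ is itself quasi-regular and that $\psi(B)$ is diffuse, hence the embedding is automatically ``full'' on the relevant corner), we get $\psi(B) \embed{M} \cL(\Stab(F_n \cup \{i_n\}))$, and we set $F_{n+1} = F_n \cup \{i_n\}$.

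The induction produces a strictly increasing sequence $F_1 \subsetneq F_2 \subsetneq \cdots$; I need the stabilizers $\Stab F_n$ to be \emph{strictly} decreasing, which follows because each step genuinely adds an element $i_n$ that is not fixed by all of $\Stab F_n$ — and if $i_n$ \emph{were} fixed by $\Stab F_n$, then $\cL(\Stab(F_n\cup\{i_n\})) = \cL(\Stab F_n)$ and the embedding is trivially satisfied, so one should instead dispose of such $i$ at the outset and only retain those $i \in I\setminus F_n$ with $\Stab(F_n\cup\{i\}) \subsetneq \Stab F_n$; the existence of at least one such $i$ at every stage is exactly what the ``otherwise apply Lemma \ref{lem.control}(1) and contradict (C3)'' argument delivers. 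The resulting infinite strictly decreasing chain $\Stab F_1 \supsetneq \Stab F_2 \supsetneq \cdots$ contradicts condition (C2), completing the proof.

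The main obstacle I anticipate is the transitivity-failure issue flagged in the Remark after Lemma \ref{lem.compose-full}: $\embed{M}$ is not transitive, so passing from ``$\psi(B) \embed{M} \cL(\Stab F_n)$'' and ``$\psi(B)$ embeds further into $\cL(\Stab F_{n+1})$ \emph{inside} $\cL(\Stab F_n)$'' to ``$\psi(B) \embed{M} \cL(\Stab F_{n+1})$'' requires either the fullness hypothesis of Lemma \ref{lem.compose-full} or the careful support-projection bookkeeping of Remark \ref{rem.composing}. The cleanest route is to arrange, at each stage, that $\psi$ is chosen with $p_0$ equal to the support projection of $E_{\cL(\Stab F_n)}(v^*v)$ as in Remark \ref{rem.composing}, and to verify that the further embedding can be taken with image not embeddable into any proper $\cL(\Stab(F_n\cup\{i\}))$-corner — which is precisely the inductive hypothesis regenerated. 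The second delicate point is correctly invoking (C3): one must translate ``$\Fix g$ has infinite index for all $g\neq e$'' into a statement that $\cL(\Gamma)$ is \emph{not} quasi-normalized by a large piece, so that the conclusion of Lemma \ref{lem.control}(1) is incompatible with the finite-index bimodule $_N H_M$ combined with the diffuseness of $B$; this is where conditions (C2) and (C3) jointly do the work.
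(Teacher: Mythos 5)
Your skeleton matches the paper's in spirit: use (C2) to pass from $\psi(B) \embed{M} \cL(\Gamma)$ down to a terminal finite set $I_0$ with $\psi(B) \embed{M} \cL(\Stab I_0)$ and, writing $I_1 = \Fix(\Stab I_0)$, with $\psi(B) \notembed{\cL(\Stab I_1)} \cL(\Stab(I_1 \cup \{i\}))$ for every $i \in I \setminus I_1$, and then derive a contradiction at that terminal stage. Arranging an infinite descent to contradict (C2) directly is logically equivalent, and the transitivity worry you raise is indeed disposed of by Remark \ref{rem.composing}. So the outline is sound.

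The genuine gap is the contradiction step itself, which you leave unfilled and for which Lemma \ref{lem.commutant} is not the right tool. The mechanism that works is the following. All of $\eta(\rL^\infty(Y))$ lies in the relative commutant $B_1 := q M^m q \cap \eta(B)'$, hence in the quasi-normalizer of $\eta(B)$; Lemma \ref{lem.control}(1) therefore pushes $v^* B_1 v$ inside $(\rL^\infty(X_0^{I_1}) \rtimes \Norm I_1)$-amplifications, and after conjugating by partial isometries of $B_1$ to reach a central projection one obtains a $^*$-homomorphism $\psitil : \rL^\infty(Y) \recht p_2 \bigl(\rL^\infty(X_0^{I_1}) \rtimes \Norm I_1\bigr)^{rm} p_2$ still satisfying the non-embedding condition. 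One then needs the \emph{second} point of Lemma \ref{lem.control}, whose last clause states that when $I \setminus I_1$ is infinite, the quasi-normalizer of such a subalgebra inside $p_2 M^{rm} p_2$ cannot have finite index. Since that quasi-normalizer contains $\vtil^* \eta(N) \vtil$, this contradicts $[pM^m p : \eta(N)] < \infty$. Condition (C3) enters to guarantee $I \setminus I_1$ is infinite: $B$ diffuse forces $\Stab I_0 \neq \{e\}$, and for any non-trivial $g \in \Stab I_0 = \Stab I_1$ one has $I_1 \subset \Fix g$, which has infinite index by (C3). Merely knowing that $\QN_{p M^m p}(\psi(B))\dpr$ sits in a small corner does not by itself conflict with the finite index of the bimodule; the argument must be pushed from $\psi(B)$ all the way to $\eta(\rL^\infty(Y))$ through $B_1$, and the infinite-index conclusion of Lemma \ref{lem.control}(2) is indispensable. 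This is precisely the step you flag as delicate, and as proposed it is a real hole.
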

\begin{proof}
Assume that we do have a $B$-$\cL(\Gamma)$-subbimodule $K$ satisfying $\dim (K_{\cL(\Gamma)}) < \infty$. First of all, take a finite index inclusion $\eta : N \recht q M^m q$ such that $_H H_M \cong \, _{\eta(N)} q(\M_{m,1}(\C) \ot \rL^2(M))_M$. The presence of the subbimodule $K$, combined with condition (C2) and the fact that the diffuse algebra $B$ cannot embed into a finite dimensional algebra, yield the following data~: a finite subset $I_0 \subset I$, a $^*$-homomorphism $\psi : B \recht p \cL(\Stab I_0)^m p$ and a non-zero partial isometry $v \in q M^m p$ satisfying $\eta(b) v = v \psi(b)$ for all $b \in B$ and such that, with $I_1 = \Fix(\Stab I_0)$ (and hence $\Stab I_0 = \Stab I_1$),
$$\psi(B) \notembed{\cL(\Stab(I_1))} \cL(\Stab(I_1 \cup \{i\})) \quad\text{whenever}\;\; i \in I \setminus I_1 \; .$$
Note that we do not exclude $I_0 = I_1 = \emptyset$ and $\Stab I_0 = \Gamma$. Remark \ref{rem.embed-a-lot} allows us to take a sequence of unitaries $u_n$ in $B$ such that
$$\|E_{\cL(\Stab(I_1 \cup \{i\}))}(x \psi(u_n) y)\|_2 \recht 0 \quad\text{for all}\;\; x,y \in \cL(\Stab I_1), i \in I \setminus I_1 \; .$$

Denote $B_1 = q M^m q \cap \eta(B)' \supset \eta(\rL^\infty(Y))$. Set $q_1 = vv^* \in B_1$ and $p_1 = v^* v$. By point 1 of Lemma \ref{lem.control}, we get that
$$v^* B_1 v \subset p_1 (\rL^\infty(X_0^{I_1}) \rtimes \Norm I_1)^m p_1 \; .$$
One also checks that
\begin{equation}\label{eq.onechecks}
\|E_{\rL^\infty(X_0^{I_1}) \rtimes (\Norm I_1 \cap \Stab i)}(x \psi(u_n) y)\|_2 \recht 0
\end{equation}
for all $x,y \in \rL^\infty(X_0^{I_1}) \rtimes \Norm I_1$ and all $i \in I \setminus I_1$.

Take partial isometries $w_1,\ldots, w_r \in B_1$ such that $w_s^* w_s \leq q_1$ for all $s$ and such that $\sum_{s=1}^r w_s w_s^*$ is a central projection $q_2 \in \cZ(B_1)$. Define
$$\vtil \in \M_{m,rm}(\C) \ot M \quad\text{as}\quad \vtil = \sum_{s=1}^r e_s \otimes w_s v \quad\text{and}\quad p_2 = \sum_{s=1}^r e_{ss} \otimes v^* w_s^* w_s v = \vtil^* \vtil \; .$$
Define
$$\psitil : \rL^\infty(Y) \recht p_2 (\rL^\infty(X_0^{I_1}) \rtimes \Norm I_1)^{rm} p_2 : \psitil(a) = \vtil^* \eta(a) \vtil \; .$$
The sequence of unitaries $\psitil(u_n)$ still satisfies \eqref{eq.onechecks}, so that we can apply point 2 of Lemma \ref{lem.control} to the subalgebra
$$\psitil(\rL^\infty(Y)) \subset p_2 (\rL^\infty(X_0^{I_1}) \rtimes \Norm I_1)^{rm} p_2 \; .$$
It follows that the quasi-normalizer of $\psitil(\rL^\infty(Y))$ inside $p_2 M^{rm} p_2$ does not have finite index. But this quasi-normalizer contains $\vtil^* \eta(N) \vtil$ and so as well the von Neumann algebra generated by $\vtil^* \eta(N) \vtil$. This leads to a contradiction since $\eta(N) \subset q M^m q$ has finite index.
\end{proof}

\section{For good actions of good groups, finite index bimodules \\ automatically preserve the Cartan subalgebras} \label{sec.partA}

The proof of the following theorem occupies this whole section. It consists of several steps, with Step~\ref{step-4} below as the final one.

\begin{theorem} \label{thm.preserve-Cartan}
Let $\Gamma \actson I$ and $\Lambda \actson J$ be good actions of good groups (Def.\ \ref{def.goodgood}). Consider $\Gamma \actson (X,\mu) = (X_0,\mu_0)^I$ and $\Lambda \actson (Y,\eta) = (Y_0,\eta_0)^J$. Set $M = \rL^\infty(X) \rtimes \Gamma$ and $N = \rL^\infty(Y) \rtimes \Lambda$.

Every finite index $N$-$M$-bimodule $_N H_M$ preserves the Cartan subalgebras, in the sense that there exists an $\rL^\infty(Y)$-$\rL^\infty(X)$-subbimodule $K \subset H$ satisfying $\dim(K_{\rL^\infty(X)}) < \infty$.
\end{theorem}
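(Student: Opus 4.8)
The plan is to run Popa's deformation/rigidity argument against the generalized Bernoulli structure of $M$, and then to carry the intertwining it produces down to the Cartan subalgebras by means of the quasi-normalizer estimates of Section~\ref{sec.control}, with Lemma~\ref{lem.notembedCartan} as the decisive negative input. Fix an infinite almost normal subgroup $\Lambda_0<\Lambda$ with the relative property~(T) (it exists since $\Lambda$ is good), and write $_NH_M\cong{}_{\eta(N)}q(\M_{m,1}(\C)\ot\rL^2(M))_M$ for a finite index $^*$-homomorphism $\eta:N\recht qM^mq$. By Remark~\ref{rem.weak-mixing-relative-T} the restricted action $\Lambda_0\actson Y$ is weakly mixing, so the rigid part of $N$ that we will track is genuinely diffuse, and, again by that remark, $\Gamma\actson X$ is weakly mixing.

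\emph{Step 1: the rigid part of $N$ goes into $\cL(\Gamma)$.} Relative property~(T) of $\Lambda_0<\Lambda$ makes $\cL(\Lambda_0)\subset N$ a rigid inclusion, hence so is $\eta(\cL(\Lambda_0))\subset qM^mq$. I would deform $M$ with Popa's malleable Bernoulli deformation \cite{P1}: enlarge $(X_0,\mu_0)$ to a probability space carrying a malleable $\R$-flow, form $\Mtil:=\rL^\infty(\widetilde{X}_0^{\,I})\rtimes\Gamma\supset M$ with its diagonal automorphisms $(\al_t)$, which fix $\cL(\Gamma)$ pointwise, converge to $\id$ in $\|\cdot\|_2$, satisfy Popa's transversality inequality, and are such that $\rL^2(\Mtil)\ominus\rL^2(M)$ is mixing relative to $\cL(\Gamma)$. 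Amplifying to $\Mtil^m$, rigidity forces $\al_t\recht\id$ uniformly on the unit ball of $\eta(\cL(\Lambda_0))$ for small $t$, and the standard transversality argument then gives $\eta(\cL(\Lambda_0))\embed{M^m}\cL(\Gamma)^m$; running it on every corner $\eta(\cL(\Lambda_0))p'$, $p'\in qM^mq\cap\eta(\cL(\Lambda_0))'$, upgrades this to $\eta(\cL(\Lambda_0))\fembed{M^m}\cL(\Gamma)^m$, and deforming the $N$-side against the rigid almost normal subgroup $\Gamma_0<\Gamma$ gives the mirror $\fembedr$ statement.

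\emph{Step 2: propagate to $\cL(\Lambda)$ and then to $\rL^\infty(Y)$.} Since $\cL(\Lambda_0)$ is diffuse, condition~(C2) lets me make the embedding of Step~1 minimal exactly as in the proof of Lemma~\ref{lem.notembedCartan}: one obtains a finite $I_0\subset I$, with $I_1:=\Fix(\Stab I_0)$ (so $\Stab I_1=\Stab I_0$), a $^*$-homomorphism $\psi:\cL(\Lambda_0)\recht p\cL(\Stab I_1)^kp$ and a non-zero partial isometry $v$ with $\eta(x)v=v\psi(x)$ and $\psi(\cL(\Lambda_0))\notembed{\cL(\Stab I_1)}\cL(\Stab(I_1\cup\{i\}))$ for all $i\in I\setminus I_1$; moreover $I\setminus I_1$ is infinite, since by the faithfulness condition~(C3) a cofinite $I_1$ would make $\Stab I_1$ finite, which cannot receive the diffuse algebra $\cL(\Lambda_0)$. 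Because $\Lambda_0<\Lambda$ is almost normal, $\cL(\Lambda)$ quasi-normalizes $\cL(\Lambda_0)$ in $N$, so $v^*\eta(\cL(\Lambda))(vv^*)v\subset\QN_{pM^kp}(\psi(\cL(\Lambda_0)))\dpr$, which Lemma~\ref{lem.control}(1) confines to $p(\rL^\infty(X_0^{I_1})\rtimes\Norm I_1)^kp$; Lemma~\ref{lem.control}(3) lets one then also pass to $\rL^\infty(X)\rtimes\Norm I_1$, and the infinite-index obstruction of Lemma~\ref{lem.control}(2), played against the finite index of $\eta(N)\subset qM^mq$, makes such reductions terminate. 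Once $\cL(\Lambda)\fembed{H}\cL(\Gamma)$ and $\cL(\Lambda)\fembedr{H}\cL(\Gamma)$ are obtained in this way, Theorem~\ref{thm.intertwine-finite-index} applies — its hypotheses hold because weak mixing of both actions forces every $\cL(\Lambda)$-$\cL(\Lambda)$-subbimodule of $\rL^2(N)$ of finite right dimension into $\rL^2(\cL(\Lambda))$, and likewise for $\cL(\Gamma)\subset M$ — and yields $\vphi:N\recht pM^\infty p$ with $_NH_M\cong{}_{\vphi(N)}p(\ell^2(\N)\ot\rL^2(M))_M$ and $\vphi(\cL(\Lambda))\subset p\cL(\Gamma)^\infty p$ of essentially finite index. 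Finally, using that $\rL^\infty(Y)$ is a MASA in $\rL^\infty(Y)\rtimes\Lambda_0$ on which $\Lambda_0$ acts weakly mixingly, together with Lemma~\ref{lem.notembedCartan} (which forbids the competing possibility that a diffuse corner of $\rL^\infty(Y)$ be absorbed into $\cL(\Gamma)$), one deduces $\eta(\rL^\infty(Y))\embed{M^m}\rL^\infty(X)$: this is exactly the $\rL^\infty(Y)$-$\rL^\infty(X)$-subbimodule $K\subset H$ with $\dim(K_{\rL^\infty(X)})<\infty$ asserted by the theorem.

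The step I expect to be the main obstacle is precisely this transfer from the group subalgebra $\cL(\Lambda)$ — the only object the deformation/rigidity argument directly controls, through its rigid part $\cL(\Lambda_0)$ — down to the Cartan $\rL^\infty(Y)$. For plain Bernoulli actions the mixing of $\cL(\Gamma)\subset M$ converts an embedding $\embed{}$ into an honest unitary conjugacy of the entire quasi-normalizer in one stroke; here the generalized Bernoulli actions are only weakly mixing, so one must instead iterate the minimality-plus-Lemma~\ref{lem.control} procedure, bookkeeping the stabilizer sets $I_1$, their normalizers $\Norm I_1$ and the associated subalgebras, so that no part of the Cartan can leak into some $\cL(\Stab\,\cdot\,)$. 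It is in this iteration that conditions (C1), (C2), (C3) — and in particular the minimal condition on stabilizers — are used in an essential way, and isolating the correct minimal embedding is the delicate point.
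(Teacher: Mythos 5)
Your plan correctly identifies the two ingredients (deformation/rigidity to move the group von Neumann algebras, quasi-normalizer control to come down to the Cartans) and you even flag the right place as difficult, but in both places the argument as sketched has a genuine gap.

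In Step 1 you cannot run Popa's malleability argument ``raw.'' For a generalized Bernoulli action, $\rL^2(\Mtil)\ominus\rL^2(M)$ is \emph{not} mixing relative to $\cL(\Gamma)$, and the claim that transversality alone produces $\eta(\cL(\Lambda_0))\embed{M}\cL(\Gamma)$ is false: the rigid part may localize near some $\rL^\infty(X_0^{I_1})\rtimes\Stab I_1$ instead. What makes the argument close in Lemma~\ref{lem.intertwine-rel-T} is the extra hypothesis that the ambient quasi-regular algebra $P$ satisfies $P\notembed{M}\rL^\infty(X)\rtimes\Stab i$ for every $i\in I$; this is used to force $p\Ntil^np\cap P'\subset pN^np$, which in turn is exactly what lets one conclude $\be(q)=q$ in the malleability step. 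So before applying the deformation one must first use condition~(C2) together with Lemma~\ref{lem.notembedCartan} to pass to a corner where these non-embedding conditions hold, and one must keep track not only of the rigid part $\cL(\Lambda_0)$ but of the full $P=\cL(\Lambda)$ that quasi-normalizes it. Your Step~1 runs the deformation first and the reduction afterwards; this order does not work.

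The larger gap is at the very end. Once you have $\vphi(\cL(\Lambda))\subset p\cL(\Gamma)^\infty p$ of essentially finite index, your final sentence invokes weak mixing plus Lemma~\ref{lem.notembedCartan} to ``deduce'' $\vphi(\rL^\infty(Y))\embed{M}\rL^\infty(X)$, but Lemma~\ref{lem.notembedCartan} only rules out absorption of $\rL^\infty(Y)$ into $\cL(\Gamma)$; it does not create a dichotomy whose remaining branch is $\embed\rL^\infty(X)$. The actual argument is the heart of the theorem. One first uses weak mixing of $\Lambda\actson Y$ and the Pimsner--Popa estimate of Proposition~\ref{prop.essential} to show that $\vphi$ intertwines the conditional expectations onto $\cL(\Lambda)$ and $\cL(\Gamma)$, then refines this to the stabilizer level to get $\vphi(\cL(\Lambda_0))\subset p\cL(\Gamma_0)^\infty p$ essentially of finite index (where $\Gamma_0=\Stab i_0$, $\Lambda_0=\Stab j_0$); this is Step~3 of the paper's proof, and it is not a formal consequence of the group-level statement. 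Then, using control of the relative commutant of $\vphi(\cL(\Lambda_0))$, a Fourier decomposition, and an orthogonality trick, one shows $\vphi(\rL^\infty(Y))\subset p(\rL^\infty(X)\rtimes\Gamma_0)^\infty p$; only then does a final application of condition~(C2), Lemma~\ref{lem.control}(3), and the faithfulness condition~(C3) — showing $\Norm I_1<\Gamma$ has infinite index, contradicting the finite index of $\vphi(N)$ — yield $\vphi(\rL^\infty(Y))\embed{M}\rL^\infty(X)$. You are right that the leak of the Cartan into $\cL(\Stab\,\cdot)$ is what must be excluded, but simply saying ``iterate the minimality-plus-Lemma~\ref{lem.control} procedure'' does not supply the argument; the intermediate passage through $\cL(\Gamma_0)$ and then $\rL^\infty(X)\rtimes\Gamma_0$ is an essential new step, not a repetition of what came before.
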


Fix the groups $\Gamma,\Lambda$ and their actions as in the formulation of Theorem \ref{thm.preserve-Cartan}. Let $_N H_M$ be a finite index bimodule.

Take a finite index inclusion $\psi : N \recht pM^n p$ such that $_N H_M \cong \, _{\psi(N)} p(\M_{n,1}(\C) \ot \rL^2(M))_M$. Write $P = \cL(\Lambda)$. Take an infinite, almost normal subgroup $G < \Lambda$ with the relative property~(T) and set $Q = \cL(G)$. A combination of Remark \ref{rem.weak-mixing-relative-T} and point 1 of Lemma \ref{lem.control} implies that $N \cap Q' \subset P$.

The proof of Theorem \ref{thm.preserve-Cartan} is organized as follows.
\begin{itemize}
\item Step \ref{step-1}~: we prove that $Q \embed{H} \cL(\Gamma)$. To prove this Step \ref{step-1}, we need to combine a version of Popa's theorem 4.1 in \cite{P1} (our Lemma \ref{lem.intertwine-rel-T} below) with the techniques of Section \ref{sec.control}.
\item Step \ref{step-2}~: we prove that $\cL(\Lambda) \embed{H} \cL(\Gamma)$. In fact, we obtain a better result, so that we can essentially assume that $\psi(\cL(\Lambda)) \subset \cL(\Gamma)^n$.
\item Step \ref{step-3}~: assuming now that $\psi(\cL(\Lambda)) \subset \cL(\Gamma)^n$, we prove that for $j \in J$ and $i \in I$, one has $\psi(\cL(\Stab j)) \embed{\cL(\Gamma)} \cL(\Stab i)$.
\item Step \ref{step-4}~: we finally prove that $\rL^\infty(Y) \embed{H} \rL^\infty(X)$.
\end{itemize}

Throughout this section, we denote by $(u_g)_{g \in \Gamma}$ the canonical unitaries in $\rL^\infty(X) \rtimes \Gamma$ and by $(\nu_s)_{s \in \Lambda}$ the canonical unitaries in $\rL^\infty(Y) \rtimes \Lambda$.

\begin{step}[\bf Intertwining $Q$ inside $\cL(\Gamma)$] \label{step-1}
Let $p_1 \in pM^n p \cap \psi(P)'$ be a non-zero projection.
Then, $\psi(Q)p_1 \embed{M} \cL(\Gamma)$.
\end{step}
\begin{proof}[Proof of Step~\ref{step-1}]
By condition (C2) and Remark \ref{rem.composing}, we can take a finite subset $I_0 \subset I$ (which might be empty) such that writing $I_1 = \Fix(\Stab I_0)$ and $I_2 = I \setminus  I_1$, we have
\begin{equation}\label{eq.hulp1}
\psi(P)p_1 \embed{M} \rL^\infty(X) \rtimes \Stab I_0 \quad\text{and}\quad \psi(P)p_1 \notembed{M} \rL^\infty(X) \rtimes \Stab(I_0 \cup \{i\}) \;\; \text{whenever}\;\; i \in I_2 \; .
\end{equation}
Note that $I_0 = \emptyset$ yields $I_2 = I$.

Suppose that $q \in \rL^\infty(X_0^{I_1})$ is a non-zero projection such that $q \rL^\infty(X_0^{I_1})$ is diffuse and $\psi(P)p_1 \embed{M} q (\rL^\infty(X) \rtimes \Stab I_0)$. Lemma \ref{lem.commutant} implies that $q \rL^\infty(X_0^{I_1}) \embed{M} p_1(\psi(P)' \cap pM^n p)p_1$. Lemma \ref{lem.trivial} yields $q \rL^\infty(X_0^{I_1}) \embed{M} \psi(P)' \cap pM^n p$. By Lemma \ref{lem.index-relative-commutant}, the inclusion $\psi(N \cap P') \subset pM^n p \cap \psi(P)'$ has finite index and we know that $N \cap P' = \cZ(P)$. But then Lemma \ref{lem.embed-finite-index} gives $q \rL^\infty(X_0^{I_1}) \embed{M} \psi(P)$, which is a contradiction with Lemma \ref{lem.notembedCartan}.

So, $I_1$ is finite. Moreover, in the case $I_1 \neq \emptyset$, we denote by $q \in \rL^\infty(X_0^{I_1})$ the projection on the atomic part of $\rL^\infty(X_0^{I_1})$ and conclude that
$$\psi(P)p_1 \embed{M} q (\rL^\infty(X) \rtimes \Stab I_0) \; .$$
The right hand side contains $q (\rL^\infty(X_0^{I_2}) \rtimes \Stab I_0)$ as a subalgebra of essentially finite index. By Lemmas \ref{lem.trivial} and \ref{lem.embed-finite-index}, we conclude that
\begin{equation}\label{eq.noghulp}
\psi(P)p_1 \embed{M} \rL^\infty(X_0^{I_2}) \rtimes \Stab I_0 \; .
\end{equation}

Write $X_1 = X_0^{I_2}$, $\Gamma_1 = \Stab I_0$ and $M_1 = \rL^\infty(X_1) \rtimes \Gamma_1$. Combining \eqref{eq.noghulp}, \eqref{eq.hulp1} and Remark \ref{rem.composing}, we can take a projection $q \in M_1^n$, a non-zero partial isometry $v \in \M_{1,n}(\C) \ot p_1 M$ and a unital $^*$-homomorphism $\eta : P \recht q M_1^n q$ satisfying $\psi(a) v = v \eta(a)$ for all $a \in P$ and satisfying
$$\eta(P) \notembed{M_1} \rL^\infty(X_0^{I_2}) \rtimes \Stab_{\Gamma_1} (i) \quad\text{whenever}\;\; i \in I_2 \; .$$
We may assume that the support of $E_{M_1}(v^* v)$ equals $q$.
Lemma \ref{lem.intertwine-rel-T} below implies that $\eta(Q) \embed{M_1} \cL(\Gamma_1)$. Again using Remark \ref{rem.composing}, we conclude that $\psi(Q)p_1 \embed{M} \cL(\Gamma_1) \subset \cL(\Gamma)$.
\end{proof}

We needed above the following result, due to Sorin Popa. Although the proof is very close to the proof of Theorem 4.1 in \cite{P1}, our Lemma \ref{lem.intertwine-rel-T} is not a direct consequence of Popa's theorem, so that we present a self-contained proof for the convenience of the reader.

The formulation of the lemma makes use of the relative property (T) for an inclusion of tracial von Neumann algebras. This notion has been introduced by Popa in 4.2 of \cite{PBetti} (see also B.2 in \cite{VBour}). For our purposes, it is in fact sufficient to know that for an inclusion of groups $\Lambda < \Gamma$, the inclusion $\cL(\Lambda ) \subset \cL(\Gamma)$ has the relative property (T), if and only if the group pair $\Lambda < \Gamma$ has the relative property (T).

\begin{lemma} \label{lem.intertwine-rel-T}
Let $\Gamma \actson I$ be a group acting on a set. Set $(X,\mu) = (X_0,\mu_0)^I$ and $M = \rL^\infty(X) \rtimes \Gamma$. Suppose that $Q \subset P \subset p M^n p$ satisfies the following properties.
\begin{itemize}
\item $Q \subset P$ is quasi-regular and has the relative property (T).
\item For all $i \in I$, we have $P \notembed{M} \rL^\infty(X) \rtimes \Stab i$.
\end{itemize}
Then, $Q \embed{M} \cL(\Gamma)$.
\end{lemma}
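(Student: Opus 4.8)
The plan is to follow the strategy of Popa's Theorem 4.1 in \cite{P1}: use the deformation coming from the Gaussian/Bernoulli construction (the so-called \emph{malleable deformation} of the Bernoulli action), combined with the relative property (T) of $Q \subset P$, to show that the deformation converges uniformly on the unit ball of $Q$, and then extract from this uniform convergence the desired intertwining $Q \embed{M} \cL(\Gamma)$. Concretely, embed $M = \rL^\infty(X) \rtimes \Gamma$ into the larger crossed product $\Mtil = \rL^\infty(\Ytil) \rtimes \Gamma$ where $(\Ytil_0, \widetilde\mu_0) \supset (X_0,\mu_0)$ is the standard probability space carrying the one-parameter family of automorphisms; this produces a continuous path $(\al_t)_{t \in \R}$ of trace-preserving automorphisms of $\Mtil$ with $\al_0 = \id$, together with a period-two automorphism $\beta$ with $\beta \al_t \beta = \al_{-t}$ and $\beta|_M = \id$, such that $\al_t \recht \id$ pointwise in $\|\cdot\|_2$ as $t \recht 0$. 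First I would record the standard fact that this deformation is \emph{mixing relative to $\cL(\Gamma)$} in the sense that the $\cL(\Gamma)$-$\cL(\Gamma)$-bimodule $\rL^2(\Mtil) \ominus \rL^2(M)$ is "small" — more precisely, it is a sub-bimodule of a multiple of the coarse bimodule averaged over $\cL(\Gamma)$ — which is exactly the input needed to convert "$Q$ does not converge uniformly" into an intertwining statement via Popa's transversality/spectral-gap argument.

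The key steps, in order, are: (1) By the relative property (T) of $Q \subset P$ and the fact that $\al_t \recht \id$ pointwise, deduce that $\al_t \recht \id$ \emph{uniformly on the unit ball of $Q$}; here I would use the standard dictionary (as in 4.2 of \cite{PBetti} or B.2 of \cite{VBour}) between relative property (T) for the inclusion and the convergence of positive-definite-like functions on $Q$. (2) Use quasi-regularity of $Q \subset P$ to upgrade this to: $\al_t \recht \id$ uniformly on the unit ball of $P$. This is the bootstrapping step where one writes any $x \in P$ with $xQ \subset \sum A x_i$ and estimates $\|\al_t(x) - x\|_2$ in terms of $\sup_{u \in \cU(Q)} \|\al_t(u)-u\|_2$; one has to be a little careful, as in Popa's original argument, to control the error uniformly — I would cut down by spectral projections of $E_Q(x^*x)$ and use that $\al_t$ is trace-preserving. (3) Now run the dichotomy: either $P \embed{M} \cL(\Gamma)$ already (in which case $Q \embed{M} \cL(\Gamma)$ by Lemma \ref{lem.trivial}, since $Q \subset P$ is a unital subalgebra — wait, more precisely one transports via the partial isometry), or, by Theorem \ref{thm.embed}(3), there is a sequence of unitaries $u_k \in P$ with $\|E_{\cL(\Gamma)}(x u_k y)\|_2 \recht 0$ for all $x, y$. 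In the latter case I would plug these $u_k$ into the uniform-convergence estimate of step (2): because the deformation is mixing relative to $\cL(\Gamma)$, the vectors $\al_t(u_k) - E_M(\al_t(u_k))$ become asymptotically orthogonal to everything fixed, forcing $\|\al_t(u_k) - u_k\|_2$ to stay bounded below by something like $\sqrt{2}\,(1 - o(1))$ for $t$ small but fixed — contradicting uniform convergence on the unit ball of $P$. Hence $P \embed{M} \cL(\Gamma)$, and therefore $Q \embed{M} \cL(\Gamma)$.

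I should note that the second bullet hypothesis — $P \notembed{M} \rL^\infty(X) \rtimes \Stab i$ for all $i \in I$ — is precisely what guarantees that the unitaries $u_k$ produced by the contradiction hypothesis "spread out" over infinitely many coordinates, so that the relative-mixing estimate in step (3) actually bites; without it, $P$ could sit inside a single-coordinate stabilizer algebra where the deformation is not mixing relative to $\cL(\Gamma)$ in the required way. Concretely this hypothesis enters when one computes $\langle \al_t(u_k)\xi, \xi\rangle$ for $\xi$ in the "deformed" part and uses that, since $P$ cannot be intertwined into any $\rL^\infty(X)\rtimes\Stab i$, the Fourier coefficients of $u_k$ supported on any fixed finite set of coordinates tend to zero. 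The main obstacle I expect is exactly step (2)–(3): getting the \emph{uniform} (over the unit ball of $P$, not just $Q$) spectral-gap estimate and correctly quantifying the relative-mixing of the Bernoulli deformation so that the contradiction closes. This is the technical heart of Popa's method; everything else (the dichotomy via Theorem \ref{thm.embed}, the passage $Q \subset P$) is routine given the machinery already set up in the paper.
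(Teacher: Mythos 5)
Your overall plan — exploit the $s$-malleable deformation of the Bernoulli action together with the relative property (T) of $Q \subset P$, use quasi-regularity to bootstrap, and conclude by contradiction from the non-embedding criterion — is the right general shape and is indeed close to the strategy of Popa's Theorem~4.1 that the paper adapts. However, there is a genuine gap at your step (2), and your account of how the second hypothesis enters is off.

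The gap: you claim that uniform convergence of $\al_t$ on the unit ball of $Q$ upgrades, via quasi-regularity alone, to uniform convergence on the unit ball of $P$. This is not a valid implication, and the parenthetical remark about cutting by spectral projections of $E_Q(x^*x)$ does not rescue it. Quasi-regularity only gives you a $\|\cdot\|_2$-dense $^*$-subalgebra of elements $x \in P$ satisfying $xQ \subset \sum Q x_i$, and pointwise control of $\|\al_t(x)-x\|_2$ on such elements does not pass to a uniform bound over the unit ball of the von Neumann algebra they generate. The paper avoids this step entirely. What the relative property (T) actually buys you, in this setting, is not uniform convergence on $(P)_1$ but the existence, for some small $t = 2^{-k}$, of a non-zero $Q$-$\al_t(Q)$-finite element $a \in p\Ntil^n \al_t(p)$. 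One then needs the period-two automorphism $\be$ (which you mention but never use) to double the deformation parameter: if $a$ is $Q$-$\al_t(Q)$-finite, then $\al_t(\be(a)^* b a)$ is $Q$-$\al_{2t}(Q)$-finite for every quasi-normalizing $b \in \QN_P(Q)$, and one must argue that some such element is non-zero. Iterating from $t = 2^{-k}$ to $t = 1$ gives the finite element at the level where $\al_1(A\ovt 1) = 1\ovt A$, and the final contradiction with $Q \notembed{M} \cL(\Gamma)$ is obtained by applying the conditional expectation onto the algebra where the first-coordinate information has been wiped out — no Fourier coefficient estimate of the kind you sketch is needed.

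Your reading of the hypothesis $P \notembed{M} \rL^\infty(X)\rtimes\Stab i$ is also not quite right. It is not there to make the deformation "mixing relative to $\cL(\Gamma)$" (which holds independently of $P$), nor to make the $u_k$ "spread out" for the final estimate. It is used in the doubling step: to run the $\be$-argument one needs the relative commutant $p\Ntil^n p \cap P'$ to live inside $p N^n p$ (so that the supremum $q$ of the range projections of $ba$ is $\be$-invariant), and this containment is exactly what the non-embedding hypothesis gives, via a Lemma~\ref{lem.control}-type argument. So the hypothesis is a commutant-control statement feeding the bootstrap, not a mixing statement feeding the endgame. With these two corrections — replace "uniform convergence on $(P)_1$" by the finite-element bootstrap with $\be$, and relocate the use of the non-embedding hypothesis to the commutant control in that bootstrap — your outline becomes the paper's proof.
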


In the proof of the lemma, we make use of the following terminology~: if $A, B \subset M$ are possibly non-unital embeddings of von Neumann algebras, we say that an element $a \in 1_A M 1_B$ is \emph{$A$-$B$-finite} if there exists $x_1,\ldots,x_n,y_1,\ldots,y_m \in 1_A M 1_B$ satisfying
$$A a \subset \sum_{k=1}^n x_k B \quad\text{and}\quad a B \subset \sum_{k=1}^m A y_k \; .$$

\begin{proof}
The crucial ingredient is \emph{Popa's $s$-malleability} satisfied by generalized Bernoulli actions with diffuse base space. Define $Y_0 = X_0 \times [0,1]$ and $(Y,\mu) = (Y_0,\mu_0)^I$. Set $A = \rL^\infty(Y)$ and $B = A \ovt A = \rL^\infty(Y \times Y)$, both equipped with the generalized Bernoulli action. The action $\Gamma \actson A$ is $s$-malleable, meaning that the von Neumann algebra $B$ admits trace preserving automorphisms $(\al_t)_{t \in \R}$ and $\be$ satisfying the following conditions (see 1.6.1 in \cite{P1} and Section~3 in \cite{VBour}).
\begin{itemize}
\item $(\al_t)$ is a continuous one-parameter group of automorphisms and $\beta$ is a period 2 automorphism.
\item $\be \al_t = \al_{-t} \be$ for all $t \in \R$.
\item $\al_1(a \ot 1) = 1 \ot a$ and $\be(a \ot 1) = a \ot 1$ for all $a \in A$.
\item $\al_t$ and $\be$ commute with the action $\Gamma \actson B$.
\end{itemize}
We define $N = A \rtimes \Gamma$ and $\Ntil = B \rtimes \Gamma$. Extend $\al_t$ and $\be$ to trace preserving automorphisms of $\Ntil$ acting identically on $\cL(\Gamma)$. We view $M \subset N \subset \Ntil$ through the identification
$$N = (A \ovt 1) \rtimes \Gamma \subset (A \ovt A) \rtimes \Gamma \; .$$
We also define $N_1 := \al_1(N) = (1 \ovt A) \rtimes \Gamma$. Note that an argument similar to Lemma \ref{lem.control} yields $p \Ntil^n p \cap P' \subset p N^n p$.

We claim the existence of a non-zero, $Q$-$\al_1(Q)$-finite element $a \in p \Ntil^n \al_1(p)$. To prove this claim, define for every $t \in \R$, the $P$-$P$-bimodule on the Hilbert space $H_t = p (\M_n(\C) \ot \rL^2(\Ntil))\al_t(p)$ by the formulas
$$x \cdot \xi = x \xi \quad\text{and}\quad \xi \cdot x = \xi \al_t(x) \quad\text{for all}\;\; x \in P \; .$$
For $t = 0$, the vector $p$ is $P$-central. The relative property~(T) of $Q \subset P$ yields a $t = 2^{-k} > 0$ such that $H_t$ admits a non-zero $Q$-central vector. Taking its polar decomposition, we get a non-zero element $a \in p \Ntil^n \al_t(p)$ satisfying
$x a = a \al_t(x)$ for all $x \in Q$. In particular, $a$ is $Q$-$\al_t(Q)$-finite. In order to arrive at the claim above, it remains to show the following~: if for some $t > 0$ there exists a non-zero, $Q$-$\al_t(Q)$-finite element $a \in p \Ntil^n \al_t(p)$, then the same is true for $2t$. So, start with $t$ and $a$. Clearly, $\al_t(\be(a^*))$ is $Q$-$\al_t(Q)$-finite as well, while $\al_t(a)$ is $\al_t(Q)$-$\al_{2t}(Q)$-finite. As a consequence, $\al_t(\be(a)^* b a)$ is $Q$-$\al_{2t}(Q)$-finite for all $b \in \QN_P(Q)$. Suppose that $\be(a)^* b a = 0$ for all $b \in \QN_P(Q)$. Then the same holds true for all $b \in P$, since $Q \subset P$ is quasi-regular. Denote by $q$ the supremum of all the range projections of the elements $ba$, $b \in P$. By our assumption, $q \be(a) = 0$. On the other hand, $q \in P' \cap p \Ntil^n p$. So, $q \in p N^n p$ and hence, $\be(q) = q$. But then, the equality $q \be(a) = 0$ implies $q a = 0$, a contradiction. We have shown the claim above.

Since there exists a non-zero $Q$-$\al_1(Q)$-finite element in $p \Ntil^n p$, we can take a non-zero element $a \in p(\M_{n,mn}(\C) \ot \Ntil)(1 \ot p)$ and a, possibly non-unital, $^*$-homomorphism $\psi : Q \recht Q^m$ satisfying $x a = a (\id \ot \al_1)\psi(x)$ for all $x \in Q$. Suppose now that $Q \notembed{M} \cL(\Gamma)$. Theorem \ref{thm.embed} allows us to take a sequence of unitaries $(u_n)$ in $Q$ such that $\|E_{\cL(\Gamma)}(x u_n y)\|_2 \recht 0$ for all $x,y \in M^n$. Recall that $Y_0 = X_0 \times [0,1]$. Define $$D = (1 \ovt \rL^\infty(([0,1] \times X_0 \times [0,1])^I)) \rtimes \Gamma$$ and note that $N_1 \subset D$. It follows that $\|E_D(x u_n y)\|_2 \recht 0$ for all $x,y \in \Ntil^n$, since it suffices to check this limit for $x,y$ being the product of an element in $D$ and an element in $M$.

It then follows that
$$\|E_D(a^* a) \|_2 = \|E_D(a^* a) (\id \ot \al_1)\psi(u_n)\|_2 = \|E_D(a^* u_n a)\|_2 \recht 0 \; .$$
We arrive at the contradiction $a = 0$.
\end{proof}

In the next step, we prove that in fact, the whole of $\cL(\Lambda)$ embeds into $\cL(\Gamma)$. We even prove that the embedding can be taken in such a way that $\cL(\Lambda) \subset \cL(\Gamma)$ has essentially finite index.

\begin{step}[\bf Intertwining the group algebras $\cL(\Lambda)$ and $\cL(\Gamma)$] \label{step-2}
There exists a partial isometry $v \in \M_{n,\infty}(\C) \ovt M$ satisfying
\begin{itemize}
\item $vv^* = p$ and $q:=v^* v \in \cL(\Gamma)^\infty$,
\item $v^* \psi(\cL(\Lambda)) v \subset q \cL(\Gamma)^\infty q$ and this is an inclusion of essentially finite index.
\end{itemize}
\end{step}
\begin{proof}[Proof of Step~\ref{step-2}]
Recall that we denoted $P = \cL(\Lambda)$ with its quasi-regular subalgebra $Q \subset P$.
We prove below that $\psi(P) \fembed{M} \cL(\Gamma)$. In terms of the bimodule $_N H_M$, this means that $\cL(\Lambda) \fembed{H} \cL(\Gamma)$. By symmetry, we then have $\cL(\Lambda) \fembedr{H} \cL(\Gamma)$. As in the first point of Lemma \ref{lem.control}, we get that all the conditions of Theorem \ref{thm.intertwine-finite-index} are fulfilled and we obtain the statement of Step~\ref{step-2}.

It remains to prove that $\psi(P) \fembed{M} \cL(\Gamma)$. Take a non-zero projection $p_0 \in p M^n p \cap \psi(P)'$. We shall prove that $\psi(P)p_0 \embed{M} \cL(\Gamma)$. By Step~\ref{step-1}, $\psi(Q) p_0 \embed{M} \cL(\Gamma)$. By condition (C2), we can take a, possibly empty, finite subset $I_0 \subset I$ such that $\psi(Q) p_0 \embed{M} \cL(\Stab I_0)$ and such that writing $I_1 = \Fix(\Stab I_0)$, we have
\begin{equation}\label{eq.hulp2}
\psi(Q)p_0 \notembed{M} \cL(\Stab (I_0 \cup \{i\})) \quad\text{whenever}\;\; i \in I \setminus I_1 \; .
\end{equation}
We first claim that $I_1$ is finite and that in case $I_1 \neq \emptyset$, $X_0$ is atomic. Indeed, if the claim would be false, take a projection $q \in \rL^\infty(X_0^{I_1})$ such that $q \rL^\infty(X_0^{I_1})$ is diffuse. We have $\psi(Q) p_0 \embed{M} q \cL(\Stab I_0)$ and a combination of Lemmas \ref{lem.trivial} and \ref{lem.commutant} implies that $q \rL^\infty(X_0^{I_1}) \embed{M} p M^n p \cap \psi(Q)'$. The right hand side contains $\psi(N \cap Q')$ as a finite index subalgebra and $N \cap Q' \subset P$. It follows that $q \rL^\infty(X_0^{I_1}) \embed{M} \psi(P)$. This is a contradiction with Lemma \ref{lem.notembedCartan}. So, $I_1$ is finite.

Denote by $P_1$ the quasi-normalizer of $\psi(Q)p_0$ inside $p_0 M^n p_0$. Note that $\psi(P)p_0 \subset P_1$ and so it is sufficient to prove that $P_1 \embed{M} \cL(\Gamma)$. By \eqref{eq.hulp2} and Remark \ref{rem.composing}, we take a $^*$-homomorphism $\eta : Q \recht q \cL(\Stab I_0)^n q$ and a non-zero partial isometry $v \in p_0 M^n q$ satisfying $\psi(a) v = v \eta(a)$ for all $a \in Q$ and satisfying
$$\eta(Q) \notembed{\cL(\Stab I_0)} \; \cL(\Stab (I_0 \cup \{i\})) \quad\text{whenever}\;\; i \in I \setminus I_1 \; .$$
By point 1 of Lemma \ref{lem.control}, we have
$$v^* P_1 v \subset (\rL^\infty(X_0^{I_1}) \rtimes \Norm I_1)^n \; ,$$
where $\Norm I_1 < \Gamma$ denotes the subgroup of elements that globally preserve $I_1$. We already know that $I_1$ is finite and that in the case $I_1 \neq \emptyset$, the space $(X_0,\mu_0)$ is atomic. So, $\cL(\Stab I_0) \subset \rL^\infty(X_0^{I_1}) \rtimes \Norm I_1$ has essentially finite index. By Lemma \ref{lem.embed-finite-index}, we get $P_1 \embed{M} \cL(\Stab I_0) \subset \cL(\Gamma)$.
\end{proof}

Fix $j_0 \in J$ and $i_0 \in I$. Set $\Lambda_0 = \Stab j_0$ and $\Gamma_0 = \Stab i_0$. Note that condition (C1) says that $\Lambda_0 \actson J \setminus \{j_0\}$ and $\Gamma_0 \actson I \setminus \{i_0\}$ act with infinite orbits.

Because of Step \ref{step-2}, we have $$_N H_M \cong \, _{\psi(N)} p (\ell^2(\N) \ot \rL^2(M))_M$$ where $p \in \cL(\Gamma)^\infty$ is such that $\psi(\cL(\Lambda)) \subset p \cL(\Gamma)^\infty p$ and such that this last inclusion has essentially finite index.

\begin{step}[\bf Intertwining the stabilizers $\cL(\Lambda_0)$ and $\cL(\Gamma_0)$] \label{step-3}
There exists a partial isometry $v \in \cL(\Gamma)^\infty$ satisfying
\begin{itemize}
\item $vv^* = p$ and $q := v^* v \in \cL(\Gamma_0)^\infty$,
\item $v^* \psi(\cL(\Lambda_0)) v \subset q \cL(\Gamma_0)^\infty q$ and this last inclusion has essentially finite index.
\end{itemize}
\end{step}

\begin{proof}[Proof of Step~\ref{step-3}]
We first claim that $E_{\cL(\Gamma)}(\psi(a)) = \psi(E_{\cL(\Lambda)}(a))$ for all $a \in N$. In fact, since $\psi(\cL(\Lambda)) \subset \cL(\Gamma)^\infty$, it is sufficient to take $a \in \rL^\infty(Y) \ominus \C1$ and prove that $E_{\cL(\Gamma)}(\psi(a)) = 0$. Since $\Lambda \actson (Y,\eta)$ is weakly mixing, take a sequence $s_n \in \Lambda$ such that $\si_{s_n}(a) \recht 0$ weakly. It follows that $\|E_{\cL(\Lambda)}(\nu_s b \si_{s_n}(a))\|_2 \recht 0$ for all $s \in \Lambda$ and $b \in \rL^\infty(Y)$. Hence,
\begin{equation} \label{eq.ttt}
\|E_{\cL(\Lambda)}(x \si_{s_n}(a))\|_2 \recht 0 \quad\text{for all}\;\; x \in N \; .
\end{equation}
Since
$$\|E_{\cL(\Gamma)}(\psi(a))\|_2 = \|\psi(\nu_{s_n}) E_{\cL(\Gamma)}(\psi(a)) \psi(\nu_{s_n})^*\|_2 = \|E_{\cL(\Gamma)}(\psi(\si_{s_n}(a)))\|_2$$
for all $n$, it suffices to prove that $\|E_{\cL(\Gamma)}(\psi(\si_{s_n}(a)))\|_2 \recht 0$ in order to obtain our claim. Choose $\eps > 0$. Since $\psi(\cL(\Lambda)) \subset p \cL(\Gamma)^\infty p$ is essentially of finite index, Proposition \ref{prop.essential} implies that we can take $y_1,\ldots,y_m \in p \cL(\Gamma)^\infty p$ such that
$$\|E_{\cL(\Gamma)}(x) \|_2 \leq \eps \|x\| + \sum_{k=1}^m \|y_k\| \, \|E_{\psi(\cL(\Lambda))}(y_k^* x)\|_2 \quad\text{for all} \; \; x \in p M^\infty p \; .$$
Moreover, if $x \in N$, we have
$$\|E_{\psi(\cL(\Lambda))}(y_k^* \psi(x))\|_2 = \|E_{\cL(\Lambda)}( \psi^{-1}(E_{\psi(N)}(y_k^*)) x ) \|_2 \; .$$
Using \eqref{eq.ttt}, we get that $\|E_{\cL(\Gamma)}(\psi(\si_{s_n}(a)))\|_2 \leq 2 \|a\| \eps$ for $n$ large enough. This proves the claim above.

We claim next that $\psi(\cL(\Lambda_0)) \fembed{\cL(\Gamma)} \cL(\Gamma_0)$. As in the beginning of the proof of Step~\ref{step-2}, a combination of Lemma \ref{lem.control} and Theorem \ref{thm.intertwine-finite-index} yields the conclusion of Step~\ref{step-3}.

The prove the claim, let $p_0 \in p \cL(\Gamma)^\infty p \cap \psi(\cL(\Lambda_0))'$ be a non-zero projection. We have to prove that $\psi(\cL(\Lambda_0))p_0 \embed{\cL(\Gamma)} \cL(\Gamma_0)$. Assume the contrary. Since $\Gamma \actson I$ is transitive, all the $\cL(\Stab i)$ are unitarily conjugate inside $\cL(\Gamma)$. So, $\psi(\cL(\Lambda_0))p_0 \notembed{\cL(\Gamma)} \cL(\Stab i)$ for all $i \in I$. As in the proof of point 1 of Lemma \ref{lem.control}, we get a sequence of unitaries $u_n$ in $\cL(\Lambda_0)$ such that
$$\|E_{\cL(\Gamma)}(x \psi(u_n) p_0 y)\|_2 \recht 0 \quad\text{for all}\;\; x,y \in p (M \ominus \cL(\Gamma))^\infty p \; .$$
Take an invertible element $a \in \rL^\infty(Y_0^{\{j_0\}})$ with $\tau(a) = 0$. The claim in the beginning of the proof says that $E_{\cL(\Gamma)}(\psi(a)) = 0$. So, $\|E_{\cL(\Gamma)}(\psi(a) \psi(u_n) p_0 \psi(a)^*)\|_2 \recht 0$. On the other hand, $\psi(a)$ and $\psi(u_n)$ commute, with $\psi(u_n)$ being unitary in $p \cL(\Gamma)^\infty p$. Hence, $\psi(a) p_0 \psi(a)^* = 0$. It follows that $p_0 = 0$, a contradiction.
\end{proof}

\begin{step}[\bf Intertwining the Cartan subalgebras] \label{step-4}
We have $\psi(\rL^\infty(Y)) \embed{M} \rL^\infty(X)$.
\end{step}

\begin{proof}[Proof of Step~\ref{step-4}]
We are by now in the following situation~: $_N H_M \cong \, _{\psi(N)} p (\ell^2(\N) \ot \rL^2(M))_M$ where $p \in \cL(\Gamma_0)^\infty$ is such that
\begin{itemize}
\item $\psi(\cL(\Lambda)) \subset p \cL(\Gamma)^\infty p$ and this inclusion has essentially finite index.
\item $\psi(\cL(\Lambda_0)) \subset p \cL(\Gamma_0)^\infty p$ and this inclusion has essentially finite index.
\end{itemize}
Although we do not need it in the proof, we make the following clarifying remark~: Lemma \ref{lem.control} implies that a $^*$-homomorphism $\psi$ satisfying all the conditions above, is uniquely determined up to the obvious replacement of $\psi$ by $v \psi( \cdot ) v^*$ for some partial isometry $v \in \cL(\Gamma_0)^\infty$ satisfying $v^* v = p$.

We first claim that $E_{\cL(\Gamma_0)}(\psi(x)) = \psi(E_{\cL(\Lambda_0)}(x))$ for all $x \in \cL(\Lambda)$. Let $s \in \Lambda - \Lambda_0$. It suffices to prove that $E_{\cL(\Gamma_0)}(\psi(\nu_s)) = 0$. Since $\Lambda_0 \actson J \setminus \{j_0\}$ has infinite orbits, we can take a sequence $s_n \in \Lambda_0$ such that
\begin{equation}\label{eq.abc}
\|E_{\cL(\Lambda_0)}(x \nu_{s_n s})\|_2 \recht 0 \quad\text{for all}\;\; x \in \cL(\Lambda) \; .
\end{equation}
Let $\eps > 0$. Since $\psi(\cL(\Lambda_0)) \subset p \cL(\Gamma_0)^\infty p$ is essentially of finite index, Proposition \ref{prop.essential} yields $y_1,\ldots,y_m \in \cL(\Gamma_0)$ such that
$$\|E_{\cL(\Gamma_0)}(x)\|_2 \leq \eps \|x\| + \sum_{k=1}^m \|y_k\| \, \|E_{\psi(\cL(\Lambda_0))}(y_k^* x)\|_2 \quad\text{for all}\;\; x \in p \cL(\Gamma_0)^\infty p \; .$$
It follows that
$$\|E_{\cL(\Gamma_0)}(\psi(\nu_s))\|_2 = \|E_{\cL(\Gamma_0)}(\psi(\nu_{s_n s}))\|_2 \leq \eps + \sum_{k=1}^m \|y_k\| \, \|E_{\cL(\Lambda_0)}(\psi^{-1}(E_{\psi(\cL(\Lambda))}(y_k^*)) \nu_{s_n s}) \|_2 \; .$$
Then \eqref{eq.abc} implies that $E_{\cL(\Gamma_0)}(\psi(\nu_s)) = 0$, proving our first claim.

We next claim that $\psi(\rL^\infty(Y)) \subset p (\rL^\infty(X) \rtimes \Gamma_0)^\infty p$. The proof of this second claim is identical to the proof of Lemma 6.10 in \cite{PV}. For the convenience of the reader, we repeat it here in a way adapted to our notations. When $\cF$ is a subset of the II$_1$ factor $M$, we denote
\begin{align*}
[\cF] & := \{x \in M \mid \, \exists x_n \in \lspan \cF \;\;\text{such that}\;\; \|x_n\| \;\;\text{remains bounded and}\;\; \|x-x_n\|_2 \recht 0 \} \\
[\cF]^\infty & := \{x \in M^\infty \mid \, \text{every component}\;\; x_{ij} \;\;\text{of $x$ belongs to}\;\; [\cF] \}
\end{align*}

Combining Lemmas \ref{lem.control} and \ref{lem.embed-finite-index}, the relative commutant of $\psi(\cL(\Lambda_0))$ inside $p M^\infty p$ is contained in $p(\rL^\infty(X_0^{i_0}) \rtimes \Gamma_0)^\infty p$. This implies that
\begin{equation}\label{eq.a}
\psi(\rL^\infty(Y_0^{j_0})) \subset [\rL^\infty(X_0^{i_0}) \cL(\Gamma_0)]^\infty \; .
\end{equation}
By our first claim above,
\begin{equation}\label{eq.b}
\psi(\nu_s) \subset [u_g \mid g \in \Gamma - \Gamma_0]^\infty \quad\text{for all $s \in \Lambda - \Lambda_0$}\; .
\end{equation}
Combining \eqref{eq.a} and \eqref{eq.b} and the transitivity of $\Lambda \actson J$, it follows that
\begin{equation}\label{eq.c}
\psi(\rL^\infty(Y_0^{j})) \subset [\rL^\infty(X_0^{I \setminus \{i_0\}}) \cL(\Gamma)]^\infty \quad\text{for all}\;\; j \in J \setminus \{j_0\} \; .
\end{equation}
Take $j \in J \setminus \{j_0\}$ and $a \in \rL^\infty(Y_0^{j})$. We prove now that in fact $\psi(a) \in (\rL^\infty(X) \rtimes \Gamma_0)^\infty$. Denote by $E : (\rL^\infty(X) \rtimes \Gamma)^\infty \recht (\rL^\infty(X) \rtimes \Gamma_0)^\infty$ the natural conditional expectation and set $b = \psi(a) - E(\psi(a))$. We prove that $b = 0$. To do so, take $x \in \psi(\rL^\infty(Y_0^{j_0}))$ with $\tau(x) = 0$ and $x$ invertible. Since $x$ commutes with $\psi(a)$ and $x$ belongs to $(\rL^\infty(X) \rtimes \Gamma_0)^\infty$ by \eqref{eq.a}, we also get that $xb = bx$. Further, \eqref{eq.c} says that $\psi(a) \in [\rL^\infty(X_0^{I \setminus \{i_0\}}) \cL(\Gamma)]^\infty$, implying that
\begin{equation}\label{eq.d}
b \in [\rL^\infty(X_0^{I \setminus \{i_0\}}) u_g \mid g \in \Gamma - \Gamma_0 ]^\infty \; .
\end{equation}
By \eqref{eq.a} and the choice of $x$, we know that $x \in [(\rL^\infty(X_0^{i_0}) \ominus \C1) \cL(\Gamma_0)]^\infty$. Combining this with \eqref{eq.d}, we get that
\begin{align*}
b x & \in [\rL^\infty(X_0^{I \setminus \{i_0\}}) \, \cL(\Gamma)]^\infty \; , \\
x b & \in [(\rL^\infty(X_0^{i_0}) \ominus \C1) \, \rL^\infty(X_0^{I \setminus \{i_0\}}) \, \cL(\Gamma)]^\infty \; .
\end{align*}
It follows that $bx$ and $xb$ are orthogonal. Since $bx = xb$, we conclude that $xb = 0$. But $x$ was invertible, so that $b = 0$, proving the inclusion
$$\psi(\rL^\infty(Y_0^{j})) \subset p(\rL^\infty(X) \rtimes \Gamma_0)^\infty p \quad\text{for all}\;\; j \in J \setminus \{j_0\} \; .$$
By \eqref{eq.a}, the same holds for $j = j_0$. The proof of the second claim $\psi(\rL^\infty(Y)) \subset p (\rL^\infty(X) \rtimes \Gamma_0)^\infty p$ is done.

We now end the proof of Step~\ref{step-4} and hence of Theorem \ref{thm.preserve-Cartan}. Suppose that $\psi(\rL^\infty(Y)) \notembed{M} \rL^\infty(X)$. By the second claim above, we know that $\psi(\rL^\infty(Y)) \embed{M} \rL^\infty(X) \rtimes \Stab i_0$. Condition (C2) then yields a non-empty finite subset $I_0 \subset I$ satisfying
\begin{itemize}
\item $\psi(\rL^\infty(Y)) \embed{M} \rL^\infty(X) \rtimes \Stab I_0$,
\item $\Stab I_0$ is non-trivial (because we supposed that $\psi(\rL^\infty(Y)) \notembed{M} \rL^\infty(X)$),
\item $\psi(\rL^\infty(Y)) \notembed{M} \rL^\infty(X) \rtimes \Stab (I_0 \cup \{i\})$ whenever $i \in I \setminus I_1$ and $I_1 = \Fix(\Stab I_0)$.
\end{itemize}
Point 3 of Lemma \ref{lem.control}, implies that $\psi(N) \embed{M} \rL^\infty(X) \rtimes \Norm I_1$. We have reached a contradiction with $\psi(N) \subset pM^\infty p$ having finite index, once we show that $\Norm I_1 < \Gamma$ has infinite index. Suppose that $\Norm I_1 < \Gamma$ has finite index. Then, $\Gamma = \bigcup_{k=1}^m g_k \Norm I_1$. Since $\Gamma \actson I$ is transitive, $I = \bigcup_{k=1}^m g_k I_1$. This means that $I_1 \subset I$ has finite index (in the sense of Def.\ \ref{def.conditions}). On the other hand, taking $g \in \Stab I_0$ and $g \neq e$, the inclusion $I_1 \subset \Fix g$ together with condition (C3), imply that $I_1 \subset I$ has infinite index. We have reached the desired contradiction.
\end{proof}

\section{Cartan preserving bimodules and cocycle superrigidity} \label{sec.partB}

In this section, we introduce the family of \emph{elementary finite index bimodules} between group measure space II$_1$ factors (see Notation \ref{not.elementary} and Definition \ref{def.elementary}). It is shown in Theorem \ref{thm.cartan-preserving-bimodules} that for \emph{cocycle superrigid actions} (with countable as well as compact target groups), every finite index bimodule containing a finite index bimodule between the Cartan subalgebras, must be elementary. In Subsection \ref{subsec.fusion-elementary}, we describe the fusion rules between elementary bimodules.

Also, for suitable generalized Bernoulli actions, the elementary bimodules can be described entirely in group theoretic terms. This is done in Proposition \ref{prop.commensuration-bernoulli}.

In Theorem \ref{thm.preserve-Cartan} it was shown that for suitable generalized Bernoulli action II$_1$ factors, every finite index bimodule contains a finite index bimodule between the Cartan subalgebras. So, coupling Theorem \ref{thm.preserve-Cartan} with the results of this section, we will arrive at a proof of Theorems \ref{thm.main} and \ref{thm.fusion-algebra}~: all finite index bimodules between good generalized Bernoulli II$_1$ factors are elementary and the fusion algebra $\FAlg(M)$ of such a II$_1$ factor $M$ can be described as an extended Hecke fusion algebra.

\subsection{Reduction to elementary bimodules} \label{subsec.reduction-elementary}

\begin{terminology}
If $\Gamma \overset{\si}{\actson} \rL^\infty(X)$ and $\Lambda \overset{\si}{\actson} \rL^\infty(Y)$, we say that a $^*$-isomorphism $\Delta : \rL^\infty(X) \recht \rL^\infty(Y)$ is a $\delta$-conjugation if $\delta : \Gamma \recht \Lambda$ is a group homomorphism and $\Delta(\si_g(a)) = \si_{\delta(g)}(\Delta(a))$ for all $a \in \rL^\infty(X)$ and all $g \in \Gamma$.
\end{terminology}

\begin{notation} \label{not.elementary}
Let $\Gamma \actson (X,\mu)$ and $\Lambda \actson (Y,\eta)$ be ergodic, essentially free, probability measure preserving actions. Let $\Omega \in Z^2(\Gamma,S^1)$ and $\omega \in Z^2(\Lambda,S^1)$ be scalar $2$-cocycles.

We define the following finite index bimodules.
\begin{itemize}
\item Let $\pi : \Gamma \recht \cU(n)$ be a finite dimensional projective representation with scalar $2$-cocycle $\Omega_\pi$ defined by $\pi(g)\pi(h) = \Omega_\pi(g,h) \pi(gh)$. Denote by
$$\Hrep(\pi,\Gamma)$$ the $\bigl(\rL^\infty(X) \rtimes_{\Omega_\pi \Omega} \Gamma\bigr) \; - \; \bigl(\rL^\infty(X) \rtimes_\Omega \Gamma \bigr) \; -$ bimodule defined through the $^*$-homomorphism
\begin{align*}
& \psi : \rL^\infty(X) \rtimes_{\Omega_\pi \Omega} \Gamma \recht \M_n(\C) \ot \bigl(\rL^\infty(X) \rtimes_\Omega \Gamma\bigr) : \\ & \psi(a) = 1 \ot a \;\; , \;\; \psi(u_g) = \pi(g) \ot u_g \quad\text{for all}\;\; a \in \rL^\infty(X), g \in \Gamma \; .
\end{align*}
\item Let $\delta : \Gamma \recht \Lambda$ be a group isomorphism  and $\Delta : \rL^\infty(X) \recht \rL^\infty(Y)$ a $^*$-isomorphism such that $\Delta$ is a $\delta$-conjugation and such that $\omega(\delta(g),\delta(h)) = \Omega(g,h)$ for all $g,h \in \Gamma$. Denote by
$$\Hiso(\Gamma,\Delta,\Lambda)$$
the $\bigl(\rL^\infty(X) \rtimes_\Omega \Gamma \bigr) \; - \; \bigl(\rL^\infty(Y) \rtimes_\omega \Lambda \bigr) \; -$ bimodule defined through the $^*$-isomorphism
$$\psi : \rL^\infty(X) \rtimes_\Omega \Gamma \recht \rL^\infty(Y) \rtimes_\omega \Lambda : \psi(a) = a \;\; , \;\; \psi(u_g) = u_{\delta(g)} \quad\text{for all}\;\; a \in \rL^\infty(X), g \in \Gamma \; .$$
\item Let $\Gamma_1 < \Gamma$ be a finite index subgroup. Denote by
$$\Hincl(\Gamma_1,\Gamma)$$
the $\bigl(\rL^\infty(X) \rtimes_\Omega \Gamma_1 \bigr) \; - \; \bigl(\rL^\infty(X) \rtimes_{\Omega} \Gamma \bigr) \; -$ bimodule given by the obvious embedding of the first crossed product into the second one. Define
$$\Hred(\Gamma,\Gamma_1)$$
as the contragredient of $\Hincl(\Gamma_1,\Gamma)$.
\end{itemize}
\end{notation}

We recall the notion of a $1$-cocycle for a group action. Suppose that $\Gamma \actson (X,\mu)$ is a probability measure preserving action. A \emph{$1$-cocycle} $\rho$ for the action $\Gamma \actson (X,\mu)$ with values in the Polish group $K$, is a measurable map
$$\rho : X \times \Gamma \recht K \quad\text{satisfying}\quad \rho(x,gh) = \rho(x,g) \rho(x \cdot g, h) \quad\text{almost everywhere} \; .$$
The $1$-cocycles $\rho, \omega : X \times \Gamma \recht K$ are called \emph{cohomologous} if there exists a measurable map $\vphi : X \recht K$ satisfying
$$\om(x,g) = \vphi(x) \, \rho(x,g) \, \vphi(x \cdot g)^{-1} \quad\text{almost everywhere}\; .$$
We identify the set of \emph{homomorphisms} from $\Gamma$ to $K$ with the set of $1$-cocycles $X \times \Gamma \recht K$ \emph{not depending on the space variable $X$.}

\begin{property}[Cocycle superrigidity] \label{property}
We deal with ergodic, essentially free, probability measure preserving actions $\Gamma \actson (X,\mu)$ satisfying the following cocycle superrigidity property~: if $\Gamma_1 < \Gamma$ is a finite index subgroup, $K$ a countable or a compact second countable group and if
$$\rho : X \times \Gamma_1 \recht K$$
is a $1$-cocycle, then $\rho$ is cohomologous to a homomorphism $\Gamma_1 \recht K$.
\end{property}

By Corollary 5.4 in the article \cite{P0} of Popa, we have the following~: if the group $\Gamma$ admits a almost normal subgroup $H < \Gamma$ with the relative property (T) and if $\Gamma \actson I$ is such that $H$ acts with infinite orbits, then the generalized Bernoulli actions $\Gamma \actson (X_0,\mu_0)^I$ satisfy the cocycle superrigidity property \ref{property}.

\begin{theorem}\label{thm.cartan-preserving-bimodules}
Let $\Gamma \actson (X,\mu)$ and $\Lambda \actson (Y,\eta)$ be ergodic, essentially free, probability measure preserving actions. Let $\Omega \in Z^2(\Gamma,S^1)$ and $\omega \in Z^2(\Lambda,S^1)$ be scalar $2$-cocycles. We make the following assumptions.
\begin{itemize}
\item $\Gamma \actson (X,\mu)$ and $\Lambda \actson (Y,\eta)$ satisfy the cocycle superrigidity property \ref{property}.
\item $\Gamma$, resp.\ $\Lambda$, have no finite normal subgroups and their actions on $(X,\mu)$, resp.\ $(Y,\eta)$, are weakly mixing.
\end{itemize}

Let $H$ be an irreducible $\bigl( \rL^\infty(X) \rtimes_\Omega \Gamma \bigr) \; - \; \bigl( \rL^\infty(Y) \rtimes_\omega \Lambda \bigr) \; -$ bimodule of finite index satisfying $\rL^\infty(X) \embed{H} \rL^\infty(Y)$. Then there exist
\begin{itemize}
\item finite index subgroups $\Gamma_1 < \Gamma$ and $\Lambda_1 < \Lambda$,
\item a finite dimensional projective representation $\pi : \Gamma_1 \recht \cU(n)$,
\item a $^*$-isomorphism $\Delta : \rL^\infty(X) \recht \rL^\infty(Y)$ and a group isomorphism $\delta : \Gamma_1 \recht \Lambda_1$ such that $\Delta$ is a $\delta$-conjugation,
\end{itemize}
satisfying $\Omega(g,h) = \Omega_\pi(g,h) \; \omega(\delta(g),\delta(h))$ for all $g,h \in \Gamma_1$, as well as the bimodule isomorphism
$$H \cong \Hred(\Gamma,\Gamma_1) \underset{\rL^\infty(X) \rtimes_\Omega \Gamma_1}{\ot} \Hrep(\pi,\Gamma_1) \underset{\rL^\infty(X) \rtimes_{\Omega_1} \Gamma_1}{\ot} \Hiso(\Gamma_1,\Delta,\Lambda_1) \underset{\rL^\infty(X) \rtimes_{\omega} \Lambda_1}{\ot}  \Hincl(\Lambda_1,\Lambda) \; ,$$
where $\Omega_1 = \Omega_\pi^{-1} \Omega$ on $\Gamma_1$.
\end{theorem}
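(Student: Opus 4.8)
The plan is to realize $H$ by a finite index $^*$-homomorphism, use the hypothesis $\rL^\infty(X)\embed{H}\rL^\infty(Y)$ together with the regularity of the two Cartan subalgebras and weak mixing to conjugate this homomorphism onto the Cartan, extract the commensurability data from the resulting finite index inclusion of Cartan subalgebras by a Zimmer cocycle argument and cocycle superrigidity, and finally identify the residual $2$-cocycle twist as a finite dimensional projective representation, once more by cocycle superrigidity.

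\emph{Step 1: realizing $H$ on the Cartan.} Write $M=\rL^\infty(X)\rtimes_\Omega\Gamma$, $N=\rL^\infty(Y)\rtimes_\omega\Lambda$ and $_M H_N\cong {}_{\psi(M)}p(\ell^2(\N)\ot\rL^2(N))_N$ for a finite index $^*$-homomorphism $\psi:M\recht pN^\infty p$ with $(\Tr\ot\tau)(p)<\infty$; the hypothesis says $\psi(\rL^\infty(X))\embed{N}\rL^\infty(Y)$. Because $H$ is irreducible, $\psi(M)'\cap pN^\infty p=\C p$. Let $e\in pN^\infty p\cap\psi(\rL^\infty(X))'$ be the supremum of the projections $vv^*$ over all partial isometries $v$ with $\psi(a)v=v\theta(a)$ for some $^*$-homomorphism $\theta$ of $\rL^\infty(X)$ into an amplification of $\rL^\infty(Y)$; since $\psi(u_g)$ normalises $\psi(\rL^\infty(X))$ it normalises $e$, so $e$ commutes with all $\psi(u_g)$ and with $\psi(\rL^\infty(X))$, hence $e\in\{0,p\}$, and $e$ is non-zero, so $e=p$. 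This gives $\psi(\rL^\infty(X))\fembed{N}\rL^\infty(Y)$, and one checks (using Lemma \ref{lem.commutant} applied to the contragredient bimodule) the symmetric relation $\rL^\infty(Y)\embed{\overline H}\rL^\infty(X)$, again in the full sense. Now $\rL^\infty(X)$ is regular in $M$ and $\rL^\infty(Y)$ is regular in $N$, and weak mixing of $\Lambda\actson Y$ forces the quasi-normaliser of (a unitary conjugate of) $\rL^\infty(Y)$ inside $N$ to be controlled by its normaliser; combining this with the intertwiners just produced — a manoeuvre in the spirit of Section~3 of \cite{P1} and of \cite{PV} — one conjugates $\psi$ by a unitary of $N^\infty$ so that $\psi(\rL^\infty(X))$ becomes a Cartan subalgebra of $pN^\infty p$ (namely a corner of $\ell^\infty(\N)\ovt\rL^\infty(Y)$), with $\psi(\rL^\infty(X))\subset\psi(M)\subset pN^\infty p$ a chain of finite index inclusions.

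\emph{Step 2: from the Cartan inclusion to a commensurability.} Via $\psi$ the pair $\rL^\infty(X)\subset M$ is identified with a finite index inclusion $\rL^\infty(Z)\subset\rL_c(\cR')$ of a Cartan subalgebra in a twisted equivalence relation algebra, where $\cR'$ is a finite index subrelation of the equivalence relation $\cR$ on $Z$ induced from $\cR_{\Lambda\actson Y}$, and where $\cR'$ is isomorphic to $\cR_{\Gamma\actson X}$. The resulting Zimmer cocycle $\rho:Z\times\Gamma\recht\Lambda$ of $\cR'\subset\cR$ is, on a suitable finite index subgroup $\Gamma_1<\Gamma$, cohomologous — by cocycle superrigidity, Property \ref{property}, with the countable target $\Lambda$ — to a homomorphism $\delta$. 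Since both actions are essentially free and $\Gamma,\Lambda$ have no finite normal subgroups, $\delta$ has trivial kernel and $\Lambda_1:=\delta(\Gamma_1)$ has finite index in $\Lambda$, so $\delta:\Gamma_1\recht\Lambda_1$ is an isomorphism; absorbing the coboundary into a unitary of the Cartan and splitting off $\Hincl(\Lambda_1,\Lambda)$ on the right and $\Hred(\Gamma,\Gamma_1)$ on the left, we reduce to the case where the bimodule is $H_1=H(\psi_1)$ for a finite index $^*$-homomorphism $\psi_1:M_1:=\rL^\infty(X)\rtimes_\Omega\Gamma_1\recht\M_n(\C)\ot N_1$, $N_1:=\rL^\infty(Y)\rtimes_\omega\Lambda_1$, with $\psi_1(a)=1\ot\Delta(a)$ for a $^*$-isomorphism $\Delta:\rL^\infty(X)\recht\rL^\infty(Y)$ that is a $\delta$-conjugation, and $H\cong\Hred(\Gamma,\Gamma_1)\ot_{M_1}H_1\ot_{N_1}\Hincl(\Lambda_1,\Lambda)$.

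\emph{Step 3: the residual twist is a projective representation.} For $g\in\Gamma_1$ the unitary $\psi_1(u_g)$ normalises $1\ot\rL^\infty(Y)$ and induces $\id\ot\si_{\delta(g)}$ on it, so $W(g):=\psi_1(u_g)(1\ot u_{\delta(g)})^*$ lies in $(1\ot\rL^\infty(Y))'\cap(\M_n(\C)\ot N_1)=\M_n(\C)\ot\rL^\infty(Y)$, i.e.\ $W(g)\in\rL^\infty(Y,\cU(n))$. Writing $\al_g=\id\ot\si_{\delta(g)}$, the identity $\psi_1(u_g)\psi_1(u_h)=\Omega(g,h)\psi_1(u_{gh})$ becomes $W(g)\,\al_g(W(h))=c(g,h)\,W(gh)$ with $c(g,h)=\Omega(g,h)\,\overline{\omega(\delta(g),\delta(h))}\in S^1$, so $W$ is a $c$-projective $1$-cocycle for $\Gamma_1\actson(Y,\eta)$ (through $\delta$). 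Composing with $\cU(n)\recht\PcU(n)$ yields an honest $1$-cocycle with values in the compact second countable group $\PcU(n)$, which by Property \ref{property} is cohomologous to a homomorphism; lifting this homomorphism to a projective map $\pi:\Gamma_1\recht\cU(n)$ and conjugating $\psi_1$ by the unitary of $\M_n(\C)\ot\rL^\infty(Y)$ implementing the cohomology, we may assume $\psi_1(u_g)=\pi(g)\ot u_{\delta(g)}$ for all $g\in\Gamma_1$. Then $W=\pi$ is constant in the $Y$-variable, so $\pi(g)\pi(h)=c(g,h)\pi(gh)$, i.e.\ $\Omega_\pi=c$ and hence $\Omega(g,h)=\Omega_\pi(g,h)\,\omega(\delta(g),\delta(h))$ on $\Gamma_1$. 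At this point $\psi_1$ is precisely the $^*$-homomorphism $a\mapsto 1\ot\Delta(a)$, $u_g\mapsto\pi(g)\ot u_{\delta(g)}$, which is the composition defining $\Hrep(\pi,\Gamma_1)\ot_{\rL^\infty(X)\rtimes_{\Omega_1}\Gamma_1}\Hiso(\Gamma_1,\Delta,\Lambda_1)$ with $\Omega_1=\Omega_\pi^{-1}\Omega=\omega\circ\delta$; substituting this into the isomorphism of Step 2 gives the stated decomposition of $H$.

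\emph{Main obstacle.} The delicate part is Step 1 together with the bookkeeping at the start of Step 2: passing from $\psi(\rL^\infty(X))\embed{N}\rL^\infty(Y)$ to an honest finite index inclusion of Cartan subalgebras requires the quasi-normaliser control of \cite{P1} (in the spirit of our Lemma \ref{lem.control}) combined with regularity, and one must then distribute the finite index correctly among the groups $\Gamma_1<\Gamma$, $\Lambda_1<\Lambda$ and the amplification, and verify — this is exactly where the absence of finite normal subgroups and the essential freeness are used — that the Zimmer cocycle produced by cocycle superrigidity genuinely restricts to an isomorphism between finite index subgroups. Once the Cartan-preserving normal form of Step 2 is in place, the remainder of Steps 2 and 3 is routine given Property \ref{property}.
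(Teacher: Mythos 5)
The central flaw is in Step~1: it is not true in general that $\psi(\rL^\infty(X))$ can be unitarily conjugated to a \emph{Cartan} subalgebra of $pN^\infty p$. What the intertwining $\rL^\infty(X)\embed{H}\rL^\infty(Y)$ (plus the full-intertwining and irreducibility argument you give, which is fine) actually yields — and this is Lemma~\ref{lem.cartan} of the paper — is a conjugation putting $\psi(\rL^\infty(X))\subset(\D_k\ot\rL^\infty(Y))p$, with the relative commutant $pN^kp\cap\psi(\rL^\infty(X))'$ a von Neumann algebra of \emph{type} I$_n$ for some finite $n$. This relative commutant is abelian (so that $\psi(\rL^\infty(X))$ could possibly be maximal abelian, hence Cartan) only when $n=1$. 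A concrete counterexample to your Step~1 claim is $H=\Hrep(\pi,\Gamma)$ for a nontrivial $n$-dimensional projective representation $\pi$: there $\psi(\rL^\infty(X))=1\ot\rL^\infty(X)$ and its relative commutant in $\M_n(\C)\ot M$ is $\M_n(\C)\ot\rL^\infty(X)$, so $\psi(\rL^\infty(X))$ is not maximal abelian and cannot be conjugated onto the Cartan $\D_n\ot\rL^\infty(X)$. Since Step~2 builds entirely on the false premise that $\psi(\rL^\infty(X))$ equals the Cartan of $pN^\infty p$ — the Feldman--Moore picture of a finite index subrelation $\cR'\subset\cR$ on a common base $Z$, and the Zimmer cocycle $Z\times\Gamma\to\Lambda$, presuppose a matching of Cartans, not a finite index inclusion of abelian algebras — the reduction to $\psi_1(a)=1\ot\Delta(a)$ at the start of Step~3 is not justified as written.

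The deeper point your decomposition misses is that there are \emph{two} independent sources of finite index hiding between $\psi(\rL^\infty(X))$ and the Cartan $(\D_k\ot\rL^\infty(Y))p$ of $pN^kp$: the inclusion $\psi(\rL^\infty(X))\subset\cZ(\cD)$ of $\psi(\rL^\infty(X))$ into the \emph{center} of the relative commutant $\cD$ (this is the combinatorial data giving $\Gamma_1<\Gamma$ and the $\Hred$ factor), and the type~I$_n$ structure of $\cD$ itself (this is where the projective representation $\pi$ lives). The paper peels these off in that order by two separate applications of cocycle superrigidity — first with a finite permutation target, then with $\PcU(n)$ — before invoking Popa's conjugacy result (Prop.\ 5.11 of \cite{P0}) to produce $\delta$, $\Delta$ and $\Lambda_1$. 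Your Step~3, taken in isolation, is a correct and natural way to extract $\pi$ once one already knows $\psi_1(a)=1\ot\Delta(a)$; the problem is that getting to that normal form requires exactly the $\cD$/$\cZ(\cD)$ bookkeeping you skip. To repair the argument you would need to replace Step~1 by the statement $\psi(\rL^\infty(X))\subset(\D_k\ot\rL^\infty(Y))p$ with type~I$_n$ relative commutant, and insert before your Step~2 an analysis of $\cZ(\cD)$ and of the type~I$_n$ part along the lines of the paper.
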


\begin{proof}
Set $A = \rL^\infty(X)$ and $B = \rL^\infty(Y)$. Set $N = \rL^\infty(Y) \rtimes_\omega \Lambda$.

By Lemma \ref{lem.cartan} below, take a projection $p \in \D_k \ot B$ and an irreducible finite index inclusion $\psi : A \rtimes_\Omega \Gamma \recht p N^k p$ defining the bimodule $H$ and satisfying
$$\psi(A) \subset (\D_k \ot B)p \quad\text{and}\quad \cD := p N^k p \cap \psi(A)' \;\;\text{of type I$_n$.}$$
Note that $\psi(A) \subset \cZ(\cD)$ and that this inclusion has finite index. Moreover, $(\Ad \psi(u_g))_{g \in \Gamma}$ extends the given ergodic action $\Gamma \actson A$ to an ergodic action on $\cZ(\cD)$, giving enough \lq uniformity\rq\ to the inclusion $\psi(A) \subset \cZ(\cD)$ to obtain an action $\Gamma \actson \{1,\ldots,r\} \times X$ and a $^*$-isomorphism $\theta : \rL^\infty(\{1,\ldots,r\} \times X) \recht\cZ(\cD)$ satisfying
\begin{align*}
& (i,x) \cdot g = (\ldots, x \cdot g) \quad\text{for all $(i,x) \in \{1,\ldots,r\} \times X$ and $g \in \Gamma$,} \\ & \theta(1 \ot a) = \psi(a) \quad\text{for all}\;\; a \in \rL^\infty(X) \; , \\
& \theta(a( \cdot g)) = \psi(u_g) \theta(a) \psi(u_g)^* \quad\text{for all $a \in \rL^\infty(\{1,\ldots,r\} \times X), g \in \Gamma$}.
\end{align*}
Write the permutation group $\pS_r$ as acting on the right on $\{1,\ldots,r\}$. We get a $1$-cocycle $\rho : X \times \Gamma \recht \pS_r$ such that $(i,x) \cdot g = (i \cdot \rho(x,g) , x \cdot g)$. By cocycle superrigidity, we may assume from the beginning that $(i,x) \cdot g = (i \cdot g, x \cdot g)$ for some action $\Gamma \actson \{1,\ldots,r\}$.

Define $\Gamma_1 = \Stab 1$ for the action $\Gamma \actson \{1,\ldots,r\}$ and set $p_1 = \theta(\delta_1 \ot 1)$. Since $(\D_k \ot B)p \subset \cD$ and $B \subset N$ is maximal abelian, it follows that $\cZ(\cD) \subset (\D_k \ot B)p$. The restriction of $\Omega$ to $\Gamma_1$ is still denoted as $\Omega$. We define
$$\psi_1 : A \rtimes_{\Omega} \Gamma_1 \recht p_1 N^k p_1 : \psi_1(x) = \psi(x) p_1 \; .$$
Writing $\cD_1 := p_1 N^k p_1 \cap \psi_1(A)' = \cD p_1$, the algebra $\cD_1$ is still of type I$_n$ and we have by construction
$$\psi_1(A) = \cZ(\cD_1) \subset (\D_k \ot B)p_1 \subset \cD_1 \; .$$
Denote by $H(\psi_1)$ the $\bigl(A \rtimes_{\Omega} \Gamma_1\bigr) \; - \; N \; -$ bimodule defined by $\psi_1$, we also have by construction
$$H \cong \Hred(\Gamma,\Gamma_1) \underset{A \rtimes_{\Omega} \Gamma_1}{\ot} H(\psi_1) \; .$$
Since $\cD_1$ is of type I$_n$ with $\cZ(\cD_1) = \psi_1(A)$, take a $^*$-isomorphism $\theta : \M_n(\C) \ot A \recht \cD_1$ satisfying $\theta(1 \ot a) = \psi_1(a)$ for all $a \in A$. Note that $(\Ad \psi_1(u_g))_{g \in \Gamma_1}$ defines an action of $\Gamma_1$ on $\cD_1$, extending the given action $(\si_g)_{g \in \Gamma_1}$ of $\Gamma_1$ on $A$. So, we find for every $g \in \Gamma_1$ a unitary $U_g \in \M_n(\C) \ot A$ such that
$$\theta(U_g (\id \ot \si_g)(a) U_g^*) = \psi_1(u_g) \theta(a) \psi_1(u_g)^* \quad\text{for all}\;\; a \in \M_n(\C) \ot A, g \in \Gamma_1 \; .$$
View $U_g$ as a measurable map from $X$ to $\cU(n)$. Composing with the quotient map $\cU(n) \recht \PcU(n)$, we
define $$\rho : X \times \Gamma_1 \recht \PcU(n) : \rho(x,g) = U_g(x) \; .$$
Then, $\rho$ is a $1$-cocycle. Cocycle superrigidity for $\Gamma_1 \actson X$ implies that we may assume that $U_g = \pi(g) \ot 1$ for some projective representation $\pi : \Gamma_1 \recht \cU(n)$. Define $\Omega_1 \in Z^2(\Gamma_1,S^1)$ by the formula $\Omega_1 = \Omega_{\pi}^{-1} \Omega$ and define for $g \in \Gamma_1$ the unitary $\nu_g \in p_1 N^k p_1$ as $\nu_g = \theta(\pi(g)^* \ot 1) \psi(u_g)$. The unitaries $(\nu_g)_{g \in \Gamma_1}$ satisfy the following properties.
\begin{itemize}
\item $\nu_g \nu_h = \Omega_1(g,h) \nu_{gh}$ for all $g,h \in \Gamma_1$.
\item $\nu_g \psi_1(a) \nu_g^* = \psi_1(\si_g(a))$ for all $g \in \Gamma_1$, $a \in A$.
\item $\nu_g$ and $\psi_1(A)$ commute with $\theta(\M_n(\C) \ot 1)$ for all $g \in \Gamma_1$.
\end{itemize}
Let $\tau_1$ be the normalized trace on $p_1 N^k p_1$ and denote by $E_{\psi_1(A)}$ the trace preserving conditional expectation $p_1 N^k p_1 \recht \psi_1(A)$. Since $\Gamma_1 \actson X$ is essentially free, the formula $\nu_g \psi_1(a) = \psi_1(\si_g(a)) \nu_g$ implies that $E_{\psi_1(A)}(\nu_g) = 0$ for all $g \neq e$. So, $\tau_1(\psi_1(a) \nu_g) = 0$ for $g \neq e$. Hence, the map $a u_g \mapsto \psi_1(a) \nu_g$ extends to an embedding $A \rtimes_{\Omega_1} \Gamma_1 \recht p_1 N^k p_1$.

Let $e_{11}$ be the obvious minimal projection in $\M_n(\C)$. Then, $\theta(e_{11} \ot 1) \in \cD_1$ is an abelian projection, while $(\D_k \ot B)p_1 \subset \cD_1$ is a maximal abelian subalgebra. We can take a partial isometry $v \in \cD_1$ satisfying $v^* v = \theta(e_{11} \ot 1)$, $p_2 := v v^*$ belongs to $\D_k \ot B$ and $v \theta(e_{11} \ot A) v^* = (\D_k \ot B) p_2$.
Define
$$\psi_2 : A \rtimes_{\Omega_1} \Gamma_1 \recht p_2 N^k p_2 : \psi_2(a u_g) = v \psi_1(a) \nu_g v^* \; .$$
By construction, we have
\begin{align}
& H(\psi_1) \cong \Hrep(\pi,\Gamma_1) \underset{A \rtimes_{\Omega_1} \Gamma_1}{\ot} H(\psi_2) \; , \label{eq.tusseniso}\\
& \psi_2(A) = (\D_k \ot B) p_2 \; . \notag
\end{align}
Set $\Ytil = \frac{\Z}{k\Z} \times Y$ and $\Lambdatil = \frac{\Z}{k\Z} \times \Lambda$ acting in the obvious way on $\Ytil$. Regard $\M_k(\C) \ot N = \rL^\infty(\Ytil) \rtimes_\omega \Lambdatil$, with the subalgebra $\D_k \ot B$ corresponding to $\rL^\infty(\Ytil)$. So, we view $p_2 \in \rL^\infty(\Ytil)$.

The proof of Proposition 5.11 in \cite{P0} (using of course once more cocycle superrigidity and using the absence of finite normal subgroups in $\Gamma$), yields the following data~:
\begin{itemize}
\item a finite index, $\Gamma_1$-invariant subalgebra $A_0 \subset A$,
\item an injective group homomorphism $\delta : \Gamma_1 \recht \Lambdatil$ and a map $\pi_0 : \Gamma_1 \recht S^1$,
\item a non-zero partial isometry $v \in p_2(\rL^\infty(\Ytil) \rtimes \Lambdatil)$ with $e=vv^*$ and $q = v^* v$,
\end{itemize}
satisfying
\begin{itemize}
\item $e$ commutes with $\psi_2(A_0 \rtimes_{\Omega_1} \Gamma_1)$ and $e \psi_2(a) e = \psi_2(E_{A_0}(a)) e$ for all $a \in A$,
\item $q$ belongs to $\rL^\infty(\Ytil)$ and is $\delta(\Gamma_1)$-invariant,
\item the map $\al : A_0 \rtimes_{\Omega_1} \Gamma_1 \recht \rL^\infty(\Ytil)q \rtimes_\omega \delta(\Gamma_1) : \al(x) = v^* \psi_2(x) v$ is a $^*$-isomorphism satisfying $\al(A_0) = \rL^\infty(\Ytil)q$ and $\al(u_g) = \pi_0(g) u_{\delta(g)}$.
\end{itemize}
Since $\psi_2(A_0 \rtimes_{\Omega_1} \Gamma_1)$ has finite index in $p_2 N^k p_2$, it follows that $\rL^\infty(\Ytil)q \rtimes_\omega \delta(\Gamma_1)$ has finite index in $q( \rL^\infty(\Ytil) \rtimes_\omega \Lambdatil) q$. As a consequence, $\delta(\Gamma_1) < \Lambdatil$ has finite index. But then, $\delta(\Gamma_1) \cap (\{e \} \times \Lambda)$ has still finite index in $\{e \} \times \Lambda$. Since $q$ is $\delta(\Gamma_1)$-invariant, it follows that $q \in \rL^\infty(\frac{\Z}{k\Z})$. The isomorphism $\al$ implements a conjugacy between the actions $\Gamma_1 \actson A_0$ and $\delta(\Gamma_1) \actson \rL^\infty(\Ytil)q$. The first one is weakly mixing and the second one has $\rL^\infty(\frac{\Z}{k\Z}) q$ as a finite-dimensional invariant subalgebra. We conclude that $q$ must be a minimal projection in $\rL^\infty(\frac{\Z}{k\Z})$. But then we may assume that $\delta$ takes values in $\Lambda$ and identify $\rL^\infty(\Ytil)q \rtimes_\omega \delta(\Gamma_1)$ with $\rL^\infty(Y) \rtimes_\omega \delta(\Gamma_1)$.

Since we assumed that the restriction of $\Lambda \actson Y$ to a finite index subgroup of $\Lambda$ is cocycle superrigid, the conjugacy $\al$ implies that $\Gamma \actson A_0$ is cocycle superrigid. As in the beginning of the proof, this implies that $\Gamma \actson A$ is conjugate to the diagonal action of $\Gamma$ on $A_0$ and an action of $\Gamma$ on a finite set. Since $\Gamma \actson A$ is weakly mixing, it follows that $A_0 = A$. But then, the projection $e$ commutes with $\psi_2(A \rtimes_{\Omega_1} \Gamma_1)$, so that $e = p_2$.

We have altogether shown that the non-normalized trace of $p_2 \in N^k$ equals $1$ and that there exists a partial isometry $v \in p_2 (\M_{k,1}(\C) \ot N)$ as well as an injective group homomorphism $\delta : \Gamma_1 \recht \Lambda$ with image of finite index, and a map $\pi_0 : \Gamma_1 \recht S^1$ such that $p_2 = vv^*$, $v^* v = 1_N$ and
$$v^* \psi_2(A) v = B \quad , \quad v^* \psi_2(u_g) v = \pi_0(g) u_{\delta(g)} \; .$$
If we now replace in \eqref{eq.tusseniso} the projective representation $\pi$ by the new projective representation $g \mapsto \overline{\pi_0(g)} \pi(g)$ and if we change $\Omega_1$ accordingly, we finally arrive at the desired conclusion
$$H \cong \Hred(\Gamma,\Gamma_1) \underset{\rL^\infty(X) \rtimes_\Omega \Gamma_1}{\ot} \Hrep(\pi,\Gamma_1) \underset{\rL^\infty(X) \rtimes_{\Omega_1} \Gamma_1}{\ot} \Hiso(\Gamma_1,\Delta,\Lambda_1) \underset{\rL^\infty(X) \rtimes_{\omega} \Lambda_1}{\ot}  \Hincl(\Lambda_1,\Lambda) \; .$$
\end{proof}

The following is a bimodule version of Theorem A.1 in \cite{PBetti} (cf.\ also Lemma 7.1 in \cite{PBetti}).

\begin{lemma}\label{lem.cartan}
Let $A \subset (M,\tau)$ and $B \subset (N,\tau)$ be II$_1$ factors with Cartan subalgebras $A,B$. Suppose that $_M H_N$ is an irreducible finite index bimodule satisfying $A \embed{H} B$. Denoting by $\D_n \subset \M_n(\C)$ the subalgebra of diagonal matrices, there exists $n$, a projection $p \in D_n \ot B$ and a finite index inclusion $\psi : A \recht p N^n p$ satisfying
\begin{itemize}
\item $_M H_N \cong \; _{\psi(M)} p(\M_{n,1}(\C) \ot \rL^2(N))_N \; ,$
\item $\psi(A) \subset (\D_n \ot B)p \; ,$
\item $p N^n p \cap \psi(A)'$ is a von Neumann algebra of type I$_k$ for some $k \in \N$.
\end{itemize}
\end{lemma}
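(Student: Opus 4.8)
The plan is to carry over to the bimodule setting the argument proving Theorem A.1 (and Lemma 7.1) of \cite{PBetti}; the inputs are Popa's intertwining theorem \ref{thm.embed}, the regularity of the Cartan subalgebra $A \subset M$, and the irreducibility of $_M H_N$. First represent $H$ concretely: choose a finite index inclusion $\psi_0 : M \recht p_0 N^m p_0$, with $p_0 \in N^m$ of finite trace, so that $_M H_N \cong \, _{\psi_0(M)} p_0(\M_{m,1}(\C)\ot\rL^2(N))_N$. Here finiteness of $\dim(_M H)$ is precisely finiteness of the Jones index of $\psi_0(M) \subset p_0 N^m p_0$, and irreducibility of $H$ means $\psi_0(M)' \cap p_0 N^m p_0 = \C p_0$. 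By Notation \ref{not.embedmodule} (using $\dim(H_N) < \infty$), the hypothesis $A \embed{H} B$ reads $\psi_0(A) \embed{N} B$, so Theorem \ref{thm.embed} provides a (possibly non-unital) $^*$-homomorphism $\theta$ from $\psi_0(A)$ into a matrix amplification of $B$ together with a non-zero partial isometry $w$ satisfying $\psi_0(a)\,w = w\,\theta(\psi_0(a))$ for all $a \in A$. Set $q_0 := ww^*$, a non-zero projection in $\cD_0 := \psi_0(A)' \cap p_0 N^m p_0$, and observe that $w^*\psi_0(A)w$ is contained in an amplification of $B$.

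The heart of the argument is a spreading step. Since $A$ is regular in $M$, the von Neumann algebra $\psi_0(M)$ is generated by the unitaries $\psi_0(u)$, $u \in \cN_M(A)$, each of which normalizes $\psi_0(A)$; hence every $q_u := \psi_0(u)q_0\psi_0(u)^*$ lies in $\cD_0$, and the supremum $e := \bigvee_{u} q_u$ is $\psi_0(M)$-invariant, so $e \in \psi_0(M)' \cap p_0 N^m p_0 = \C p_0$; as $q_0 \neq 0$ we get $\bigvee_{u} q_u = p_0$. Exactly as in the proof of Theorem A.1 of \cite{PBetti}, a finite exhaustion inside the II$_1$ factor $p_0 N^m p_0$ — performed, if necessary, after replacing $N$ by a matrix amplification to create enough room — then produces finitely many $u_1,\dots,u_r \in \cN_M(A)$ out of which one assembles a partial isometry $W$ with $WW^* = p_0$, $W^*W =: P$ a projection lying in a matrix amplification $N^n = \M_n(\C)\ot N$ with in fact $P \in \M_n(\C)\ot B$, and such that $\psi_0(a)W = W\,\Theta(a)$ for all $a \in A$ with $\Theta(A) \subset (\M_n(\C)\ot B)P$. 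Because $WW^* = p_0 = \psi_0(1)$, the map $\psi_1 := W^*\psi_0(\cdot)W : M \recht PN^nP$ is again a finite index inclusion representing $_M H_N$, and now $\psi_1(A) \subset (\M_n(\C)\ot B)P$.

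It remains to pass to the diagonal form and to pin down the relative commutant. The algebra $Q := (\M_n(\C)\ot B)P$ is of type I with centre $(1\ot B)P$ and contains the maximal abelian subalgebra $(\D_n\ot B)P$; the abelian subalgebra $\psi_1(A) \subset Q$ lies in some maximal abelian subalgebra $C$ of $Q$, and $P \in C$. Since all maximal abelian subalgebras of a type I von Neumann algebra with separable predual are unitarily conjugate, a direct-integral/measurable-selection argument gives a unitary $v$ of $\M_n(\C)\ot B$ with $vCv^* \subset \D_n\ot B$; setting $p := vPv^* \in \D_n\ot B$ and $\psi := v\psi_1(\cdot)v^*$, we obtain a finite index inclusion $\psi : M \recht pN^np$ representing $_M H_N$, with $p \in \D_n\ot B$ and $\psi(A) \subset (\D_n\ot B)p$; this is the first two assertions. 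For the third, put $\cD := \psi(A)' \cap pN^np$. Maximality of $A$ in $M$ gives $\psi(A)' \cap \psi(M) = \psi(A)$, so Lemma \ref{lem.index-relative-commutant} together with finiteness of the index of $\psi(M) \subset pN^np$ shows $\psi(A) \subset \cD$ has finite index; as $\psi(A) \subseteq \cZ(\cD)$, $\rL^2(\cD)$ is finitely generated over $\cZ(\cD)$ as well, so $\cD$, disintegrated over its centre into finite factors, has all fibres matrix algebras of uniformly bounded size, i.e.\ $\cD$ is of type I. Finally, the conjugation action of $\psi(\cN_M(A))$ on $\cZ(\cD)$ has trivial fixed-point algebra, since a fixed element commutes with $\psi(M)$ and hence lies in $\psi(M)' \cap pN^np = \C p$; therefore the fibre dimension of $\cD$ is constant, say $k$, and $\cD = \psi(A)' \cap pN^np$ is of type I$_k$, as claimed.

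The step I expect to be the main obstacle is the spreading in the second paragraph. A priori $q_0 = ww^*$ may be a rather small projection of $\cD_0$, and the translates $\psi_0(u)q_0\psi_0(u)^*$ need not assemble into an orthogonal partition of $p_0$; passing from the relation $\bigvee_u q_u = p_0$ to an honest partial isometry $W$ with $WW^* = p_0$ — while simultaneously keeping $\psi_1(A)$ inside a matrix amplification of $B$ rather than merely inside $N^n$ — is what forces the possible passage to a further matrix amplification of $N$ and a careful exhaustion, and it is precisely the mechanism responsible for Cartan preservation. The remaining ingredients, namely unitary conjugacy of maximal abelian subalgebras of type I algebras and finite-index control of relative commutants (Lemma \ref{lem.index-relative-commutant}), are standard.
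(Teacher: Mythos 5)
Your outline follows the paper's own route (which is indeed the bimodule version of Theorem~A.1 in \cite{PBetti}): represent $H$ by a finite index inclusion $\psi_0$, intertwine $A$ into an amplification of $B$, pass to a diagonal form by MASA conjugacy, and obtain the type I$_k$ structure from Lemma~\ref{lem.index-relative-commutant} and the ergodic normalizer action. But the spreading step that you flag as the "main obstacle" is a genuine gap, and not of the kind that a routine exhaustion closes. From $\bigvee_{u} q_u = p_0$ inside the \emph{non-abelian} relative commutant $\cD_0 = \psi_0(A)' \cap p_0 N^m p_0$ one cannot directly assemble a partial isometry $W$ with $WW^* = p_0$ that still carries $\psi_0(A)$ into an amplification of $B$: the translates $q_u$ are not mutually orthogonal, and orthogonalizing them (say via $e_{j+1} = (q_{u_{j+1}}\vee \sum_{i\leq j}e_i) - \sum_{i\leq j}e_i$) produces projections that are no longer of the form $\psi_0(u)q_0\psi_0(u)^*$, so one loses all control of the right support of the resulting partial isometry.

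The paper avoids this by reordering. It first establishes -- using $A'\cap M=A$, the finiteness of $[p_0N^mp_0:\psi_0(M)]$, Lemma~\ref{lem.index-relative-commutant}, and the ergodicity of $\Ad\psi_0(\cN_M(A))$ -- that $\cD_0$ is of type I$_k$. Then it cuts the intertwiner so that its left support is an \emph{abelian} projection of $\cD_0$ and its right support lies in $B$, and uses the MASA conjugacy (your Kadison--Ringrose ingredient) to get $v^*\cD_0 v \subset Bp_1$. Only now does the exhaustion take place, and it runs on the abelian center $Z = \cZ(\cD_0)$: the central support $z$ of the abelian projection $vv^*$ is moved around by the ergodic action of $\Ad\psi_0(\cN_M(A))$ on $Z$, so finitely many translates of $z$ fill up $p_0$, giving partial isometries $u_j$ with initial and final supports in $Z$, $u_j^*u_j\leq z$, $\sum_j u_ju_j^*=p_0$ and $u_j^*Zu_j=u_j^*u_jZ$. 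Together with $k$ equivalences $w_1,\dots,w_k$ in $\cD_0$ of $vv^*$ summing to $z$, the products $u_jw_iv$ are the columns of the final partial isometry. The point is that the ergodic exhaustion is a standard measurable-selection fact \emph{on an abelian algebra}, whereas your version is attempted inside the non-abelian $\cD_0$, where it has no clean formulation. So the fix is not a larger amplification but a reordering: compute the type I$_k$ structure first (it is independent of the diagonal form), cut the intertwiner to an abelian projection of $\cD_0$, and then the spreading step becomes the standard fact you want.
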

\begin{proof}
Take an irreducible finite index inclusion $\eta : M \recht q N^m q$ such that $$_M H_N \cong \; _{\eta(M)} q(\M_{m,1}(\C) \ot \rL^2(N))_N \; .$$
By Lemma \ref{lem.index-relative-commutant} and the fact that $M \cap A' = A$, we get that $\eta(A) \subset q N^m q \cap \eta(A)'$ has finite index. Hence, $q N^m q \cap \eta(A)'$ is of finite type I. Moreover, whenever $u \in \cN_M(A)$, the unitary $\eta(u)$ normalizes $q N^m q \cap \eta(A)'$ and all these normalizing unitaries together act ergodically on $q N^m q \cap \eta(A)'$ by the irreducibility $q N^m q \cap \eta(M)' = \C 1$. The existence of a trace preserving ergodic action implies that $q N^m q \cap \eta(A)'$ is of type I$_k$ for some $k \in \N$.

Observe that every, possibly non-unital, $^*$-homomorphism $A \recht \M_r(\C) \ot B$ can be intertwined into a $^*$-homomorphism $A \recht \D_r \ot B$. We know that $\eta(A) \embed{N} B$ and so, using the previous observation, we find a non-zero partial isometry $v \in q (\M_{m,r}(\C) \ot N)$ and a, possibly non-unital, $^*$-homomorphism $\theta : A \recht \D_r \ot B$ satisfying $\eta(a) v = v \theta(a)$ for all $a \in A$. Cutting down $v$ on the left by an abelian projection in $q N^m q \cap \eta(A)'$ and on the right by one of the minimal projections in $\D_r$, we may assume that $r=1$ and that $vv^*$ is an abelian projection in $q N^m q \cap \eta(A)'$. Set $p_1 = \theta(1)$, which is a non-zero projection in $B$. It follows that $v^* v$ is an abelian projection in $p_1 N p_1 \cap \theta(A)'$. Moreover, $B p_1 \subset p_1 N p_1 \cap \theta(A)'$ is a maximally abelian subalgebra. By a folklore result (use e.g.\ Section 6.4 in \cite{KadRing}), we can take a partial isometry $w \in p_1 N p_1 \cap \theta(A)'$ satisfying $ww^* = v^* v$ and $w^* (p_1 N p_1 \cap \theta(A)') w \subset B p_1$. Replacing $v$ by $vw$, we have found a non-zero partial isometry $v \in q(\M_{m,1}(\C) \ot N)$ satisfying $p_1 = v^* v \in B$, $vv^* \in q N^m q \cap \eta(A)'$ and $v^* (q N^m q \cap \eta(A)') v \subset B p_1$.

Since $qN^m q \cap \eta(A)'$ is of type I$_k$ with abelian projection $vv^*$, denote the central support of $vv^*$ by $z$ and take $k$ partial isometries $w_1,\ldots,w_k \in qN^m q \cap \eta(A)'$ satisfying $w_i^* w_i = vv^*$ and $\sum_{i=1}^k w_i w_i^* = z$. Set $Z = \cZ(qN^m q \cap \eta(A)')$ and note that $\Ad \psi(\cN_M(A))$ defines an ergodic action on $Z$. So, we can take partial isometries $u_1,\ldots,u_n$ in $qN^m q$ having initial and final support in $Z$ and satisfying
$$u_j^* u_j \leq z \; , \quad \sum_{j=1}^n u_j u_j^* = q \quad\text{and}\quad u_j^* Z u_j = u_j^* u_j Z \; .$$
Define $u \in \M_{m,nk}(\C) \ot N$ with columns given by $u_j w_i v \in \M_{m,1}(\C) \ot N$. Then $uu^* = q$ and $u^* Z u \subset \D_{nk} \ot B$. Defining $\psi(x) = u^* \eta(x)u$ and observing that $\eta(A) \subset Z$, we have reached the conclusion of the lemma.
\end{proof}

\subsection{Fusion rules between elementary bimodules} \label{subsec.fusion-elementary}

In this subsection, the Connes tensor product of bimodules is just denoted by juxtaposition. So, $H K$ means $H \underset{N}{\ot} K$. It will always be clear from the context over which von Neumann algebras the bimodules are considered.

We denote by $\Aut(X,\mu)$ the Polish group of probability space isomorphisms modulo equality almost everywhere. Since we write in this article groups as acting on the right on $X$, we also let $\Delta \in \Aut(X,\mu)$ act on the right on $x$ and write $x \cdot \Delta$. For every $\Delta \in \Aut(X,\mu)$, define $(\Delta_* f)(x) = f(x \cdot \Delta^{-1})$ and note that $\Delta_* \in \Aut(\rL^\infty(X,\mu))$. As such, the group $\Aut(X,\mu)$ is isomorphic with the group of trace preserving automorphisms of $\rL^\infty(X,\mu)$.

\begin{definition} \label{def.commensuration}
Let $\Gamma \actson (X,\mu)$ be an essentially free, probability measure preserving action.

An element $\Delta$ of $\Aut(X,\mu)$ is called a \emph{commensuration of $\Gamma \actson (X,\mu)$} if $\Delta$ belongs to the commensurator of $\Gamma$ viewed as a subgroup of $\Aut(X,\mu)$.

Whenever $\Delta$ is a commensuration of $\Gamma \actson (X,\mu)$, we define the finite index subgroups of $\Gamma$
\begin{equation}\label{eq.leftright-group}
\lef{\Delta} := \Gamma \cap \Delta \Gamma \Delta^{-1} \quad\text{and}\quad \rig{\Delta} := \Gamma \cap \Delta^{-1} \Gamma \Delta \; .
\end{equation}
Then, $\delta := \Ad \Delta^{-1} : \lef{\Delta} \recht \rig{\Delta}$ is a group isomorphism. Moreover, with the above notations, $\Delta_*$ is a $\delta$-conjugation.

More generally, a \emph{commensuration of $\Gamma \actson (X,\mu)$ and $\Lambda \actson (Y,\eta)$} is a probability space isomorphism $\Delta : (X,\mu) \recht (Y,\eta)$ such that $\Gamma \cap \Delta^{-1} \Lambda \Delta$ and $\Lambda \cap \Delta \Gamma \Delta^{-1}$ have finite index in $\Gamma$, resp.\ $\Lambda$.
\end{definition}

\begin{definition}[Elementary bimodules] \label{def.elementary}
Let $\Gamma \actson (X,\mu)$ be an essentially free, probability measure preserving, weakly mixing action. Let $\Omega \in Z^2(\Gamma,S^1)$ be a scalar $2$-cocycle. Set $M = \rL^\infty(X) \rtimes_\Omega \Gamma$. Using the notations of \ref{not.elementary}, \ref{def.commensuration} and motivated by Theorem \ref{thm.cartan-preserving-bimodules}, we introduce the following finite index $M$-$M$-bimodules.

Suppose that $\Delta$ is a commensuration of $\Gamma \actson (X,\mu)$ with $\rig{\Delta}$, $\lef{\Delta}$ defined by \eqref{eq.leftright-group}. Let $\pi : \rig{\Delta} \recht \cU(n)$ be a projective representation satisfying $\Omega_\pi \; \Omega = \Omega \circ \Ad \Delta$ on $\rig{\Delta}$. Define
$$H(\Delta,\pi) := \Hred(\Gamma,\lef{\Delta}) \; \Hiso(\lef{\Delta},\Delta_*,\rig{\Delta}) \; \Hrep(\rig{\Delta},\pi) \; \Hincl(\rig{\Delta},\Gamma) \; .$$
We call the $M$-$M$-bimodules of the form $H(\Delta,\pi)$, the \emph{elementary $M$-$M$-bimodules}.
\end{definition}

We now write down the fusion rules between the elementary $M$-$M$-bimodules. In order to do so correctly, we need to take care of the $2$-cocycles and define the induction of a projective representation.

\begin{definition} \label{def.induce}
Let $\Gamma$ be a group with subgroup $\Gamma_1 < \Gamma$. Let $\pi : \Gamma_1 \recht \cU(K)$ be a projective representation with scalar $2$-cocycle $\Omega_\pi$. Suppose that $\Omega \in Z^2(\Gamma,S^1)$ is a $2$-cocyle that extends $\Omega_\pi$. We then define the \emph{induced projective representation} $\pi_1 = \Ind_{\Gamma_1}^\Gamma \pi$ \emph{along the cocycle $\Omega$} on the Hilbert space
\begin{align*}
K_1 := \{\xi : \Gamma \recht K \mid \; & \xi(hg) = \overline{\Omega(h,g)} \pi(h) \xi(g) \;\;\text{for all}\;\; h \in \Gamma_1, g \in \Gamma \\ & \text{and}\;\; g \mapsto \|\xi(g)\| \;\; \text{belongs to}\;\; \ell^2(\Gamma_1 \backslash \Gamma) \}
\end{align*}
by the formula $(\pi_1(g) \xi)(h) = \Omega(h,g) \xi(hg)$. Note that $\Omega_{\pi_1} = \Omega$.
\end{definition}

Whenever $\Omega \in Z^2(\Gamma,S^1)$ and $g \in \Gamma$, define $$\vphi_g(h) = \Omega(ghg^{-1},g) \overline{\Omega(g,h)} \; .$$ Also write for every function $\vphi : \Gamma \recht S^1$ its coboundary $(\partial \vphi)(g,h) = \vphi(g) \vphi(h) \overline{\vphi(gh)}$. One then has
$$\Omega \circ \Ad g = (\partial \vphi_g) \Omega \quad\text{for all}\;\; g \in \Gamma \; .$$

\begin{theorem} \label{thm.fusion-elementary}
Let $\Gamma \actson (X,\mu)$ be an essentially free, probability measure preserving, weakly mixing action. Let $\Omega \in Z^2(\Gamma,S^1)$ be a scalar $2$-cocycle. Set $M = \rL^\infty(X) \rtimes_\Omega \Gamma$. Define $\cF \subset \FAlg(M)$ as the fusion subalgebra generated by the elementary $M$-$M$-bimodules in the sense of Definition \ref{def.elementary}.

Let $\Delta$ and $\Deltatil$ be commensurations of $\Gamma \actson (X,\mu)$.
\begin{enumerate}
\item If $\pi, \pitil$ are projective representations of $\rig{\Delta}$ satisfying $\Omega_{\pi} = \Omega_{\pitil} = (\Omega \circ \Ad \Delta) \; \overline{\Omega}$ on $\rig{\Delta}$, then
$$H(\Delta,\pi \oplus \pitil) \cong H(\Delta,\pi) \oplus H(\Delta,\pitil) \; .$$
\item If $\pi,\pitil$ are projective representations of $\rig{\Delta}, \rig{\Deltatil}$ satisfying $\Omega_{\pi} \; \Omega = \Omega \circ \Ad \Delta$ and similarly for $\Omega_{\pitil}$, then the following fusion rule holds.
$$H(\Delta,\pi) \; H(\Deltatil,\pitil) \cong \bigoplus_{g \, \in \; \raisebox{-0.6ex}[0ex][0ex]{\scriptsize $\rig{\Delta}$} \!\! \backslash \Gamma / \!\! \raisebox{-0.6ex}[0ex][0ex]{\scriptsize $\lef{\Deltatil}$}} H(\Delta g \Deltatil, \pi_g) \; ,$$
where $\pi_g$ is the projective representation of $\rig{\Delta g \Deltatil}$ defined as the induction
\begin{align*}
\pi_g = & \Ind_{\rig{\Delta g \Deltatil} \cap \rig{\Deltatil}}^{\rig{\Delta g \Deltatil}} \bigl( (\vphi_g \circ \Ad \Deltatil) (\pi \circ \Ad g \Deltatil)  \ot \pitil\Bigr) \\ &\text{along the $2$-cocycle}\;\; (\Omega \circ \Ad(\Delta g \Deltatil)) \; \overline{\Omega} \;\;\text{on}\;\; \rig{\Delta g \Deltatil} \; .
\end{align*}
\item Let $\pi$ be a projective representations of $\rig{\Delta}$ satisfying $\Omega_{\pi} \; \Omega = \Omega \circ \Ad \Delta$ on $\rig{\Delta}$. Then $H(\Delta,\pi)$ is an \emph{irreducible} bimodule if and only if $\pi$ is irreducible. Moreover, for irreducible $\pi,\pitil$, we have
$$H(\Delta,\pi) \cong H(\Deltatil,\pitil)$$
if and only if there exist $g,h \in \Gamma$ such that
$$\Deltatil = g \Delta h \quad\text{and}\quad \pitil \;\;\text{is unitarily equivalent with}\;\; (\vphi_g \circ \Ad(\Delta h)) \, (\pi \circ \Ad h) \, \vphi_h \; .$$
\end{enumerate}
\end{theorem}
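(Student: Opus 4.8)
The plan is to develop a small calculus for the four \lq atomic\rq\ bimodules $\Hred,\Hiso,\Hrep,\Hincl$ under the Connes tensor product and then to rewrite the stacked product of eight such pieces into the normal form $\Hred\cdot\Hiso\cdot\Hrep\cdot\Hincl$ defining an elementary bimodule. The ingredients I would establish first are: (i) $\Hrep(\Gamma,\pi)\ot\Hrep(\Gamma,\pitil)\cong\Hrep(\Gamma,\pi\ot\pitil)$, coming directly from the block multiplication of the defining $^*$-homomorphisms; in particular this yields part (1) at once, since $\Hrep(\rig{\Delta},\pi\oplus\pitil)$ is by definition the block-diagonal homomorphism, hence splits as $\Hrep(\rig{\Delta},\pi)\oplus\Hrep(\rig{\Delta},\pitil)$, and $\ot$ distributes over $\oplus$. (ii) The fact that $\Hred(\Gamma,\Gamma_1)$ is by construction the contragredient of $\Hincl(\Gamma_1,\Gamma)$, together with the double coset decomposition of $_{M_1}\rL^2(M)_{M_2}$ for $M_i=\rL^\infty(X)\rtimes_\Omega\Gamma_i$, namely $\Hincl(\Gamma_1,\Gamma)\ot_M\Hred(\Gamma,\Gamma_2)\cong\bigoplus_{g\in\Gamma_1\backslash\Gamma/\Gamma_2}\rL^2(M_1 u_g M_2)$, where the $g$-summand is the bimodule attached to the conjugation $\^*$-isomorphism $\rL^\infty(X)\rtimes_\Omega(\Gamma_2\cap g^{-1}\Gamma_1 g)\to\rL^\infty(X)\rtimes_\Omega(\Gamma_1\cap g\Gamma_2 g^{-1})$ implemented by $\Ad u_g$. (iii) The equivariance of $\Hiso$, which transports every bimodule along the conjugations $\Ad\Delta$, and the \lq push-through\rq\ rule that moves an $\Hrep$ past an $\Hiso$ or past a double-coset inclusion at the cost of replacing $\pi$ by $\pi\circ\Ad\Delta$, resp.\ by its restriction/induction to the relevant subgroup. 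Recording that $\Ad u_g$ multiplies the cocycle by the coboundary $\partial\vphi_g$, i.e.\ $\Omega\circ\Ad g=(\partial\vphi_g)\Omega$, is precisely what forces the twisting functions $\vphi_g$ to appear.

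For part (2), I would stack the two elementary bimodules as a tensor product of eight atomic pieces and contract the inner $\Hincl(\rig{\Delta},\Gamma)\ot_M\Hred(\Gamma,\lef{\Deltatil})$ by (ii) into a direct sum over $g\in\rig{\Delta}\backslash\Gamma/\lef{\Deltatil}$. For each $g$, the resulting $M$-$M$-bimodule is a composite of $\Hred(\Gamma,\lef{\Delta})$, $\Hiso(\lef{\Delta},\Delta_*,\rig{\Delta})$, $\Hrep(\rig{\Delta},\pi)$, the conjugation-by-$g$ piece (an iso $\rL^\infty(X)\rtimes(\lef{\Deltatil}\cap g^{-1}\rig{\Delta}g)\to\rL^\infty(X)\rtimes(\rig{\Delta}\cap g\lef{\Deltatil}g^{-1})$), $\Hiso(\lef{\Deltatil},\Deltatil_*,\rig{\Deltatil})$, $\Hrep(\rig{\Deltatil},\pitil)$ and $\Hincl(\rig{\Deltatil},\Gamma)$. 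I would then merge the three isomorphism-type pieces (the two $\Hiso$'s and the conjugation-by-$g$) into the single $\Hiso$ attached to the commensuration $\Delta g\Deltatil$, checking that the associated subgroups come out to be exactly $\lef{\Delta g\Deltatil}$ and $\rig{\Delta g\Deltatil}$; push the first $\Hrep(\rig{\Delta},\pi)$ rightwards past the conjugation and the second $\Hiso$ so that $\pi$ becomes $\pi\circ\Ad(g\Deltatil)$ on the intersection group, picking up the factor $\vphi_g\circ\Ad\Deltatil$ from the cocycle change; fuse it with $\Hrep(\rig{\Deltatil},\pitil)$ by (i), getting a single $\Hrep$ over $\rig{\Delta g\Deltatil}\cap\rig{\Deltatil}$; and finally move that $\Hrep$ to the right past the remaining $\Hincl$, which, by the same double-coset mechanism, converts it into the induction $\pi_g=\Ind_{\rig{\Delta g\Deltatil}\cap\rig{\Deltatil}}^{\rig{\Delta g\Deltatil}}\bigl((\vphi_g\circ\Ad\Deltatil)(\pi\circ\Ad g\Deltatil)\ot\pitil\bigr)$ along the cocycle $(\Omega\circ\Ad(\Delta g\Deltatil))\overline\Omega$. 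Reassembling in the order $\Hred\cdot\Hiso\cdot\Hrep\cdot\Hincl$ gives $H(\Delta g\Deltatil,\pi_g)$, proving the stated fusion rule.

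For part (3), I would compute the $\Hom$-spaces by Frobenius reciprocity. Using the adjunction of $\Hincl$ and $\Hred$ and ingredient (ii), $\operatorname{End}_{M\text{-}M}(H(\Delta,\pi))$ reduces to a direct sum over $g\in\lef{\Delta}\backslash\Gamma/\lef{\Delta}$ of $\Hom$-spaces between bimodules over $\rL^\infty(X)\rtimes\rig{\Delta}$; the terms with $g\notin\lef{\Delta}$ vanish, since a nonzero intertwiner would conjugate the diffuse abelian algebra $\rL^\infty(X)$ inside itself followed by a genuine automorphism coming from $g$, impossible because $\Gamma\actson(X,\mu)$ is essentially free and weakly mixing (the same kind of intertwining argument used in Section \ref{sec.partA}); the surviving $g=e$ term contributes exactly $\pi(\rig{\Delta})'\cap\M_n(\C)$, again using weak mixing to compute the endomorphism algebra of $\Hrep(\rig{\Delta},\pi)$. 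Hence $H(\Delta,\pi)$ is irreducible iff $\pi$ is. The isomorphism criterion follows from the analogous computation of $\Hom_{M\text{-}M}(H(\Delta,\pi),H(\Deltatil,\pitil))$: it is nonzero iff for some double coset representative $g$ the conjugation data force $\Deltatil$ to agree with $\Delta$ up to left/right translation by $\Gamma$, i.e.\ $\Deltatil=g\Delta h$, and the representations match after the corresponding translation, which is precisely $\pitil\cong(\vphi_g\circ\Ad(\Delta h))(\pi\circ\Ad h)\vphi_h$; for irreducible $\pi,\pitil$ a nonzero intertwiner is a multiple of a unitary, so this is an isomorphism.

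The genuine obstacle is purely bookkeeping: checking that through the rearrangements in part (2) the projective representations transform with exactly the twisting functions $\vphi_g$ and that $\pi_g$ carries precisely the cocycle $(\Omega\circ\Ad(\Delta g\Deltatil))\overline\Omega$ on $\rig{\Delta g\Deltatil}$ --- the analogue of the \lq painful exercise\rq\ already needed for the associativity of $\He(\Gamma<G)$. I would handle this by first running the whole argument with trivial $2$-cocycles to fix the combinatorics and then reinstating the cocycles, tracking the coboundary corrections $\partial\vphi_g$ at each step via $\Omega\circ\Ad g=(\partial\vphi_g)\Omega$ and the cocycle identity relating $\vphi_{gh}$ to $\vphi_g$ and $\vphi_h$. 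Everything outside this bookkeeping is standard bimodule calculus: adjunction, double coset decompositions of crossed-product bimodules, and the weak-mixing/essential-freeness intertwining arguments of Sections \ref{sec.partA}--\ref{sec.partB}.
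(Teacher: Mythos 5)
Your proposal is correct and covers the same ground as the paper, but organizes it differently. The paper fixes coset representatives and writes out one explicit finite-index inclusion $\psi : \rL^\infty(X)\rtimes_\Omega\Gamma\recht\M_n(\C)\ot\M_k(\C)\ot(\rL^\infty(X)\rtimes_\Omega\Gamma)$ realizing $H(\Delta,\pi)\cong H(\psi)$, then declares part~2 ``elementary, though a bit tedious'' from that formula, and proves part~3 by computing the relative commutant $\psi(A)'\cap(\M_n(\C)\ot\M_k(\C)\ot M)=\M_n(\C)\ot\D_k\ot A$ (using Lemma~\ref{lem.maximal-com}) and then showing that the induced action $(\gamma_s)$ on $\M_n(\C)\ot\D_k$ is ergodic iff $\pi$ is irreducible. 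You instead package the same information as a fusion calculus for the four atomic pieces: the double-coset contraction $\Hincl(\Gamma_1,\Gamma)\ot_M\Hred(\Gamma,\Gamma_2)\cong\bigoplus_{\Gamma_1\backslash\Gamma/\Gamma_2}(\cdots)$, push-through rules that move $\Hrep$ past $\Hiso$ and past inclusion/reduction pieces at the cost of composing with $\Ad(\cdot)$ and inducing, and cocycle tracking via $\Omega\circ\Ad g=(\partial\vphi_g)\,\Omega$. Your Frobenius/$\Hom$-space computation in part~3 is literally the same relative-commutant computation the paper does (an endomorphism of $H(\psi)$ is an element of $\psi(M)'\cap p M^{nk}p$), and the off-diagonal terms are killed by exactly Lemma~\ref{lem.maximal-com}, whose proof uses weak mixing as you note (your reference to Section~\ref{sec.partA} is a slight misdirection --- the relevant lemma lives in Section~\ref{sec.partB} --- but the ingredients invoked, essential freeness and weak mixing, are the right ones). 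The gain of your formulation is modularity and avoiding the explicit coset bookkeeping until the last moment; the cost is the same as the paper's: the cocycle bookkeeping through the $\vphi_g$ corrections is the genuine work in either version.
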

\begin{proof}
Point 1 is obvious.

Set $A = \rL^\infty(X)$. Let $\Delta$ be a commensuration of $\Gamma \actson (X,\mu)$. Set $\Gamma_1 = \lef{\Delta}$ and $\delta = \Ad \Delta^{-1}$ the isomorphism of $\Gamma_1$ onto $\rig{\Delta}$. Let $\pi : \delta(\Gamma_1) \recht \cU(n)$ be a projective representation such that $\Omega_\pi \; \Omega = \Omega \circ \delta$. Let $k$ be the index of $\Gamma_1$ in $\Gamma$. Then,
$$H(\Delta,\pi) \cong H(\psi)$$
where $\psi : A \rtimes_\Omega \Gamma \recht \M_n(\C) \ot \M_k(\C) \ot (A \rtimes_\Omega \Gamma)$ is defined as follows. Choose coset representatives $\Gamma = \bigsqcup_{i=1}^k \Gamma_1 g_i$ and define the action $\Gamma \actson \{1,\ldots,k\}$ and the $1$-cocycle $\eta : \{1,\ldots,k\} \times \Gamma \recht \Gamma_1$ by the formula $g_i s = \eta(i,s) g_{i \cdot s}$ for all $i \in \{1,\ldots,k\}$ and $s \in \Gamma$. We then set
\begin{alignat*}{2}
\psi(a) &= \sum_{i=1}^k 1 \ot e_{ii} \ot \Delta_*(\si_{g_i}(a)) & & \qquad\text{for all}\;\; a \in A \; , \\
\psi(u_s) &= \sum_{i=1}^k \Omega(g_i,s) \overline{\Omega(\eta(i,s),g_{i \cdot s})} \; \bigl( \pi(\delta(\eta(i,s))) \ot e_{i,i \cdot s} \ot u_{\delta(\eta(i,s))} \bigr) & & \qquad\text{for all}\;\; s \in \Gamma \; .
\end{alignat*}
Based on such a concrete formula, verification of 2 is elementary, though a bit tedious.

We finally prove 3. If $\pi$ is reducible, it follows immediately from 1 that $H(\Delta,\pi)$ is reducible. So, suppose that $\pi$ is irreducible and realize $H(\Delta,\pi) \cong H(\psi)$ as above.
We first prove that the relative commutant of $\psi(A)$ inside $\M_n(\C) \ot \M_k(\C) \ot (A \rtimes_\Omega \Gamma)$ is given by $\M_n(\C) \ot \D_k \ot A$. In order to obtain this result, it suffices to take an arbitrary element $x \in \M_k(\C) \ot (A \rtimes_\Omega \Gamma)$ commuting with all the operators
$$\sum_{i=1}^k e_{ii} \ot \Delta_*(\si_{g_i}(a)) \;\; , \;\; a \in A \; ,$$
and to prove that $x \in \D_k \ot A$. Consider $x$ as a matrix $(x_{ij})$ with entries in $A \rtimes_\Omega \Gamma$ and decompose every entry as $x_{ij} = \sum_{s \in \Gamma} x_{ij}^s u_s$ with $x_{ij}^s \in A$. It follows that
$$x_{ij}^s \; \si_s(\Delta_*(\si_{g_j}(a))) = \Delta_*(\si_{g_i}(a)) \; x^s_{ij} \quad\text{for all}\;\; a \in A, s \in \Gamma, i,j \in \{1,\ldots,k\} \; .$$
So, if $x^s_{ij} \neq 0$, the automorphisms $\si_s \circ \Delta_* \circ \si_{g_j}$ and $\Delta_* \circ \si_{g_i}$ coincide on some non-negligible part of $X$. By Lemma \ref{lem.maximal-com} below, it follows that $g_i g_j^{-1} \in \Gamma_1$. This implies that $i = j$. Moreover, if $x^s_{ii} \neq 0$, it follows that $\si_s$ is the identity on a non-negligible part of $X$ and so $s=e$. Altogether it follows that $x \in \D_k \ot A$.

The unitaries $(\psi(u_s))_{s \in \Gamma}$ normalize $\M_n(\C) \ot \D_k \ot A$ and define on this last von Neumann algebra an action that we denote by $(\rho_s)$. The automorphism $\theta \in \Aut(\M_n(\C) \ot \D_k \ot A)$ given by
$$\theta(e_{ij} \ot e_k \ot a) = e_{ij} \ot e_k \ot \Delta(\si_{g_k}(a)) \; ,$$
conjugates the action $(\rho_s)$ with the diagonal action of $\Gamma \actson A$ and the action $(\gamma_s)$ of $\Gamma$ on $\M_n(\C) \ot \D_k$ given by
$$\gamma_{s^{-1}}(w \ot e_k) = \pi(\eta(k,s))^* w \pi(\eta(k,s)) \ot e_{k \cdot s} \; .$$
Since $\Gamma \actson A$ is weakly mixing, the irreducibility of $H(\psi)$ follows if we prove the ergodicity of $(\gamma_s)$. The latter follows straightforwardly from the assumed irreducibility of $\pi$.

The statement about the isomorphism between two irreducible elementary bimodules can be proven in a way that is very similar to the proof of the irreducibility of $H(\psi)$.
\end{proof}

We will use Theorem \ref{thm.fusion-elementary} to compute explicitly the fusion algebra of certain II$_1$ factors. So, we need to compute the commensurator of $\Gamma$ inside $\Aut(X,\mu)$ in certain cases. This is done for certain generalized Bernoulli actions in Proposition \ref{prop.commensuration-bernoulli} below.

We make use of the obvious embeddings
$$\pi_i : \rL^\infty(X_0,\mu_0) \recht \rL^\infty((X_0,\mu_0)^I) \quad\text{for all}\;\; i \in I \; .$$

\begin{proposition} \label{prop.commensuration-bernoulli}
Let $\Gamma \actson I$ and $\Lambda \actson J$ be actions such that $\Stab i \actson I \setminus \{i\}$ and $\Stab j \actson J \setminus \{j\}$ act with infinite orbits for all $i \in I$ and $j \in J$. Let $(X_0,\mu_0)$ and $(Y_0,\eta_0)$ be standard probability spaces and consider the generalized Bernoulli actions
$$\Gamma \actson (X,\mu) := (X_0,\mu_0)^I \quad\text{and}\quad \Lambda \actson (Y_0,\eta_0)^J \; .$$
Suppose that the bijection $\eta : I \recht J$ is a commensuration of $\Gamma \actson I$ and $\Lambda \actson J$, i.e.\ $\eta(g \cdot i) = \delta(g) \cdot \eta(i)$ for all $i \in I$ and $g \in \Gamma_1$, where $\delta$ is an isomorphism between the finite index subgroups $\Gamma_1,\Lambda_1$ of $\Gamma,\Lambda$.

Let for every orbit $\ibar \in \Gamma_1 \backslash I$, be given a trace preserving $^*$-isomorphism $\al_{\ibar} : \rL^\infty(X_0,\mu_0) \recht \rL^\infty(Y_0,\eta_0)$. Define the isomorphism $\Delta : (X,\mu) \recht (Y,\eta)$ such that
$$\Delta_* \circ \pi_i = \pi_{\eta(i)} \circ \al_{\ibar} \quad\text{for all}\;\; i \in I \; .$$
\begin{itemize}
\item $\Delta$ is a commensuration of $\Gamma \actson (X_0,\mu_0)^I$ and $\Lambda \actson (Y_0,\eta_0)^J$.
\item Every commensuration of $\Gamma \actson (X_0,\mu_0)^I$ and $\Lambda \actson (Y_0,\eta_0)^J$ arises in this way.
\end{itemize}
\end{proposition}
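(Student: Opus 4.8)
First, suppose $\Delta$ is built from a $\delta$‑equivariant bijection $\eta\colon I\recht J$ and trace preserving isomorphisms $\al_{\ibar}\colon\rL^\infty(X_0)\recht\rL^\infty(Y_0)$ as in the statement. Since $\rL^\infty\bigl((X_0,\mu_0)^I\bigr)$ is generated by the mutually independent subalgebras $\pi_i(\rL^\infty(X_0))$, $i\in I$, and each $\al_{\ibar}$ is trace preserving, the prescription $\Delta_*\circ\pi_i=\pi_{\eta(i)}\circ\al_{\ibar}$ extends uniquely to a trace preserving $^*$‑isomorphism $\Delta_*\colon\rL^\infty(X)\recht\rL^\infty(Y)$, i.e.\ to a probability space isomorphism $\Delta\colon(X,\mu)\recht(Y,\eta)$. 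For $g\in\Gamma_1$ one has $\overline{g\cdot i}=\ibar$ and $\eta(g\cdot i)=\delta(g)\cdot\eta(i)$, so on each generator
$$\Delta_*\bigl(\si_g(\pi_i(a))\bigr)=\Delta_*\bigl(\pi_{g\cdot i}(a)\bigr)=\pi_{\delta(g)\cdot\eta(i)}\bigl(\al_{\ibar}(a)\bigr)=\si_{\delta(g)}\bigl(\pi_{\eta(i)}(\al_{\ibar}(a))\bigr)=\si_{\delta(g)}\bigl(\Delta_*(\pi_i(a))\bigr).$$
Hence $\Delta_*$ is a $\delta$‑conjugation on $\Gamma_1$; by Definition \ref{def.commensuration} it follows at once that $\Lambda_1\subseteq\Lambda\cap\Delta\Gamma\Delta^{-1}$ and $\Gamma_1\subseteq\Gamma\cap\Delta^{-1}\Lambda\Delta$, both of finite index, so $\Delta$ is a commensuration of the two generalized Bernoulli actions.

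Conversely, let $\Delta$ be any commensuration; put $\Gamma_1=\Gamma\cap\Delta^{-1}\Lambda\Delta$, $\Lambda_1=\Delta\Gamma_1\Delta^{-1}=\Lambda\cap\Delta\Gamma\Delta^{-1}$ and $\delta=\Ad\Delta\colon\Gamma_1\recht\Lambda_1$, so that $\Delta_*$ is a $\delta$‑conjugation. The hypothesis forces every $\Stab i$ to be infinite (were $\Gamma\cdot i$ finite, $\Stab i$ would act with finite orbits on the finite set $(\Gamma\cdot i)\setminus\{i\}$). Set $A_i=\pi_i(\rL^\infty(X_0))$ and $\Sigma_i=\Gamma_1\cap\Stab_\Gamma(i)$; this is a finite index subgroup of the infinite group $\Stab_\Gamma(i)$, and a finite index subgroup of a group acting with only infinite orbits again acts with only infinite orbits, so $\Sigma_i$ acts on $I\setminus\{i\}$ with infinite orbits. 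Hence $\Sigma_i\actson(X_0,\mu_0)^{I\setminus\{i\}}$ is weakly mixing and, since $\Sigma_i$ acts trivially on the $i$‑th coordinate, $\rL^\infty(X)^{\Sigma_i}=A_i$. Moreover $\Sigma_i$ is exactly the pointwise stabiliser of $A_i$ inside $\Gamma_1$, and $i\mapsto A_i$ is injective (distinct coordinate algebras meet in $\C1$, as $\mu_0$ is not a point mass). Define $B_j$ and $\Sigma^\Lambda_j$ symmetrically, with $\rL^\infty(Y)^{\Sigma^\Lambda_j}=B_j$.

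Because $\Delta_*$ conjugates $\Sigma_i$ to $\delta(\Sigma_i)$ we have $\Delta_*(A_i)=\rL^\infty(Y)^{\delta(\Sigma_i)}$ and, dually, $\Delta_*^{-1}(B_j)=\rL^\infty(X)^{\delta^{-1}(\Sigma^\Lambda_j)}$. The crux of the proof — and the step I expect to be the main obstacle — is the assertion: \emph{for every $i\in I$ the group $\delta(\Sigma_i)$ fixes some point of $J$, and for every $j\in J$ the group $\delta^{-1}(\Sigma^\Lambda_j)$ fixes some point of $I$.} When the base space is atomic this is elementary: $\rL^\infty(Y)^{\delta(\Sigma_i)}$ is then a finite dimensional unital $^*$‑subalgebra, and writing $\rL^2(Y)\ominus\C1=\bigoplus_{\emptyset\ne F\subseteq J\ \text{finite}}\bigotimes_{j\in F}(\rL^2(Y_0)\ominus\C1)$ one computes the fixed point algebra of the (infinite, coordinate‑permuting) group $\delta(\Sigma_i)$; the facts that this algebra is finite dimensional \emph{and} $^*$‑algebraically closed force $\delta(\Sigma_i)$ to stabilise a single coordinate. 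When the base is diffuse no such algebraic feature survives — an infinite tensor product of diffuse spaces is again diffuse, so nothing purely algebraic separates one coordinate from a product of several — and here one passes to a further finite index subgroup and invokes Popa's deformation/rigidity for weakly mixing $s$‑malleable actions: the generalized Bernoulli action over a diffuse base is $s$‑malleable and $\Sigma_i$ acts weakly mixingly on the complementary coordinates, so the analysis of Bernoulli‑type conjugacies of \cite{P2,PV} shows that, after the reduction, $\Delta_*$ is coordinate‑wise.

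Granting the key step, the rest is formal. If $\delta(\Sigma_i)$ fixes $j$, then $B_j\subseteq\rL^\infty(Y)^{\delta(\Sigma_i)}=\Delta_*(A_i)$; if moreover $\delta^{-1}(\Sigma^\Lambda_j)$ fixes some $i'$, then $A_{i'}\subseteq\Delta_*^{-1}(B_j)$, i.e.\ $\Delta_*(A_{i'})\subseteq B_j\subseteq\Delta_*(A_i)$, hence $A_{i'}\subseteq A_i$ and so $i'=i$; then $B_j\subseteq\Delta_*(A_i)\subseteq B_j$, i.e.\ $\Delta_*(A_i)=B_j$. Thus $\Delta_*(A_i)=B_{\eta(i)}$ for a unique $\eta(i)\in J$, and running the argument for $\Delta_*^{-1}$ shows $\eta\colon I\recht J$ is a bijection. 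Set $\al_i=\pi_{\eta(i)}^{-1}\circ\Delta_*\circ\pi_i\colon\rL^\infty(X_0)\recht\rL^\infty(Y_0)$, a trace preserving isomorphism (in particular $(X_0,\mu_0)\cong(Y_0,\eta_0)$). Feeding $\pi_i(a)$ into $\Delta_*\si_g=\si_{\delta(g)}\Delta_*$ ($g\in\Gamma_1$) yields $\pi_{\eta(g\cdot i)}(\al_{g\cdot i}(a))=\pi_{\delta(g)\cdot\eta(i)}(\al_i(a))$ for all $a$, whence $\eta(g\cdot i)=\delta(g)\cdot\eta(i)$ and $\al_{g\cdot i}=\al_i$. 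So $\eta$ is a commensuration of $\Gamma\actson I$ and $\Lambda\actson J$ with the subgroups $\Gamma_1,\Lambda_1$, and $\al_i$ depends only on the orbit $\ibar$ of $i$; putting $\al_{\ibar}=\al_i$ exhibits $\Delta$ as obtained from the construction in the statement.
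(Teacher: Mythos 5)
Your direction that a commensuration determines a bijection of coordinate algebras, and the formal bookkeeping you do once that is granted, are consistent with the paper. But the ``crux'' you single out is both harder than what is needed and left unproven, and your proposed route through it would not go through cleanly.

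The gap is this. You want to show that $\delta(\Sigma_i)$ itself fixes some point of $J$, and you split into cases: a sketch via the decomposition of $\rL^2(Y)$ when the base is atomic, and an appeal to $s$-malleability and the Bernoulli deformation/rigidity analysis when the base is diffuse. Neither is filled in, and the diffuse-base appeal is not really applicable (the deformation/rigidity theorems of \cite{P1,P2,PV} produce conjugacies of crossed products starting from von Neumann algebraic data, not from a given commensuration of the base actions; they are also far heavier than the situation calls for). More importantly, the strong claim is unnecessary. The paper proves only the weaker fact that a \emph{finite index subgroup} of $\delta(\Gamma_1\cap\Stab i)$ is contained in some $\Stab j$, and this is immediate and uniform in the base space: set $\Gamma_2=\Gamma_1\cap\Stab i$, note $\rL^\infty(X)^{\Gamma_2}=\rL^\infty(X_0^i)$ because $\Gamma_2$ acts with infinite orbits on $I\setminus\{i\}$ (hence weak mixing on the complementary coordinates), so $\Delta_*(\rL^\infty(X_0^i))=\rL^\infty(Y)^{\delta(\Gamma_2)}\neq\C1$. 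If $\delta(\Gamma_2)\cap\Stab j$ had infinite index for \emph{every} $j$, then $\delta(\Gamma_2)$ would have all infinite orbits on $J$ and the fixed point algebra would be trivial, a contradiction. Pick such a $j$ and let $\Gamma_3<\Gamma_2$ have $\delta(\Gamma_3)=\delta(\Gamma_2)\cap\Stab j$; since $\Gamma_3<\Stab i$ is still of finite index, again $\rL^\infty(X_0^i)=\rL^\infty(X)^{\Gamma_3}$, so $\rL^\infty(Y_0^j)=\rL^\infty(Y)^{\Stab j}\subset\rL^\infty(Y)^{\delta(\Gamma_3)}=\Delta_*(\rL^\infty(X_0^i))$. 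By symmetry one also gets the reverse inclusion for a suitable $i'$, and your concluding bookkeeping (which goes through verbatim with this weaker input) shows $\Delta_*(\rL^\infty(X_0^i))=\rL^\infty(Y_0^j)$. So the atomic/diffuse dichotomy you introduce is a false lead: the only tool you need at the delicate step is that a generalized Bernoulli action of a group acting with all infinite orbits has trivial fixed point algebra, and the crucial flexibility is the freedom to shrink to a finite index subgroup of $\Sigma_i$ before taking fixed points.

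A secondary remark: the first bullet (the easy direction) is correct as you wrote it and matches the paper (which simply calls it obvious). Your identification $\rL^\infty(X)^{\Sigma_i}=A_i$ and the observation that $\al_i$ depends only on the $\Gamma_1$-orbit of $i$ are also fine.
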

\begin{proof}
It is obvious that the proposed formula for $\Delta$ defines a commensuration.

Suppose conversely that $\Delta$ is a commensuration of $\Gamma \actson (X_0,\mu_0)^I$ and $\Lambda \actson (Y_0,\eta_0)^J$. So, let $\Delta(x \cdot g) = \Delta(x) \cdot \delta(g)$ for almost all $x \in X, g \in \Gamma_1$ and let $\delta : \Gamma_1 \recht \Lambda_1$ be an isomorphism between finite index subgroups $\Gamma_1,\Lambda_1$ of $\Gamma,\Lambda$.

We first claim that there exists a bijection $\eta : I \recht J$ such that $\Delta_*(\rL^\infty(X_0^i)) = \rL^\infty(Y_0^{\eta(i)})$ for all $i \in I$. Because of the symmetry between $\Delta$ and $\Delta^{-1}$, it is sufficient to prove that for every $i \in I$, there exists $j \in J$ satisfying $\rL^\infty(Y_0^{j}) \subset \Delta_*(\rL^\infty(X_0^i))$.

Choose $i \in I$. Set $\Gamma_2 = \Gamma_1 \cap \Stab i$, which is a finite index subgroup of $\Stab i$. By our assumptions, $\rL^\infty(X_0^i) = \rL^\infty(X)^{\Gamma_2}$ and so $\Delta_*(\rL^\infty(X_0^i)) = \rL^\infty(Y)^{\delta(\Gamma_2)}$. If for all $j \in J$ the group $\delta(\Gamma_2) \cap \Stab j$ would have infinite index in $\delta(\Gamma_2)$, the group $\delta(\Gamma_2)$ would act with infinite orbits on $J$ and so, the fixed point algebra $\rL^\infty(Y)^{\delta(\Gamma_2)}$ would be trivial, a contradiction. So, take $j \in J$ and a finite index subgroup $\Gamma_3 < \Gamma_2$ satisfying $\delta(\Gamma_3) = \delta(\Gamma_2) \cap \Stab j$. Then, $\Gamma_3 < \Stab i$ has finite index, implying that $\rL^\infty(X_0^i) = \rL^\infty(X)^{\Gamma_3}$ and hence, $\Delta_*(\rL^\infty(X_0^i)) = \rL^\infty(Y)^{\delta(\Gamma_3)}$. Therefore,
$$\rL^\infty(Y_0^{j}) = \rL^\infty(Y)^{\Stab j} \subset \rL^\infty(Y)^{\delta(\Gamma_3)} = \Delta_*(\rL^\infty(X_0^i)) \; .$$
This proves the claim above.

It is then clear that $\eta(g \cdot i) = \delta(g) \cdot \eta(i)$ for all $i \in I$ and $g \in \Gamma_1$. One defines for every $i \in I$, the $^*$-isomorphism $\al_i : \rL^\infty(X_0,\mu_0) \recht \rL^\infty(Y_0,\eta_0)$ such that $\Delta_* \circ \pi_i = \pi_{\eta(i)} \circ \al_i$. It is easily checked that $\al_i$ only depends on the $\Gamma_1$-orbit of $i$. So, we are done.
\end{proof}

For weakly mixing actions $\Gamma \actson (X,\mu)$, the subgroup $\lef{\Delta} = \Gamma \cap \Delta \Gamma \Delta^{-1}$ for a given commensuration $\Delta$ of $\Gamma \actson (X,\mu)$ can be characterized by a weaker condition. That is done in the following lemma that we have used in the proof of Theorem \ref{thm.fusion-elementary}.

\begin{lemma} \label{lem.maximal-com}
Let $\Gamma \actson (X,\mu)$ be an essentially free, probability measure preserving, weakly mixing action. Then the commensurator of $\Gamma$ inside $\Aut(X,\mu)$ acts essentially freely on $(X,\mu)$.
\end{lemma}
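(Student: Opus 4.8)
The plan is to argue by contradiction. Fix $\theta \in \Comm_{\Aut(X,\mu)}(\Gamma)$ with $\theta \neq \id$, set $E := \Fix\theta = \{x : \theta(x) = x\}$, and assume $\mu(E) > 0$; I will show this forces $\mu(E) = 1$, contradicting $\theta \neq \id$. Since the commensurator is a group, $\Gamma_1 := \Gamma \cap \theta^{-1}\Gamma\theta$ has finite index in $\Gamma$, and conjugation by $\theta$ restricts to a group homomorphism $\delta : \Gamma_1 \recht \Gamma$, $\delta(g) = \theta g \theta^{-1}$.

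The key step is a short computation. For $g \in \Gamma_1$ write $k := g\,\delta(g)^{-1} = g\theta g^{-1}\theta^{-1} \in \Gamma$, so that $g\theta g^{-1} = k\theta$. The automorphism $g\theta g^{-1}$ fixes $g(E) = g(\Fix\theta)$ pointwise, while $\theta$ fixes $E$ pointwise; hence on $E \cap g(E)$ both $\theta$ and $k\theta$ act trivially, and therefore $k$ acts trivially on $E \cap g(E)$. Since $\Gamma \actson (X,\mu)$ is essentially free, this leaves only two possibilities: either $k = e$, which means precisely that $g$ centralizes $\theta$, or $\mu(E \cap g(E)) = 0$.

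Let $C := \{g \in \Gamma_1 : \theta g = g\theta\}$ be the centralizer of $\theta$ in $\Gamma_1$. Each $c \in C$ permutes $\Fix\theta$, so $c(E) = E$; consequently $g(E)$ depends only on the coset $gC$, and by the dichotomy above the sets $\{g(E) : gC \in \Gamma_1/C\}$ are pairwise essentially disjoint, each of measure $\mu(E) > 0$. Hence $[\Gamma_1 : C] \le \mu(E)^{-1} < \infty$, so $C$ has finite index in $\Gamma$. A finite index restriction of a weakly mixing action is again weakly mixing --- if $V \subset \rL^2(X) \ominus \C 1$ were a nonzero finite dimensional $C$-invariant subspace, then $\sum_g \si_g(V)$ over coset representatives of $C$ in $\Gamma$ would be a nonzero finite dimensional $\Gamma$-invariant subspace of $\rL^2(X) \ominus \C 1$ --- so $C \actson (X,\mu)$ is in particular ergodic. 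As $E$ is $C$-invariant with $\mu(E) > 0$, ergodicity forces $\mu(E) = 1$, i.e.\ $\theta = \id$; this is the desired contradiction.

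The manipulations are elementary; the point requiring the most care is the counting step, where one must verify that the translates $g(E)$ really are indexed by $\Gamma_1/C$ and pairwise disjoint modulo null sets, and then invoke the (standard) fact that weak mixing descends to finite index subgroups to obtain the ergodicity of $C \actson X$ used in the final line. Everything else is bookkeeping about the commensuration data.
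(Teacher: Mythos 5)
Your proof is correct, and it takes a route that differs from the paper's at the crucial intermediate step. Both arguments share the same skeleton: use essential freeness of $\Gamma$ to convert ``$\theta$ fixes a non-negligible set'' into commutation relations between $\theta$ and group elements, and then invoke an ergodicity statement to upgrade ``positive measure'' to ``full measure.'' Where they diverge is in how the commutation is obtained. The paper fixes $g \in \rig{\Delta} = \Gamma \cap \Delta^{-1}\Gamma\Delta$ and first handles those $g$ for which $\cU \cap \cU \cdot g^{-1}$ is non-negligible; it then promotes this to \emph{all} $g \in \rig{\Delta}$ by a two-step trick that writes $g = g_1^{-1}(g_1 g)$, choosing $g_1$ via the ergodicity of the diagonal action $\rig{\Delta} \actson X \times X$ (i.e.\ weak mixing of the finite index restriction). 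Thus the paper actually shows $\Delta$ centralizes the whole of $\rig{\Delta}$. You instead observe the dichotomy ``$g$ centralizes $\theta$ or $g(E) \cap E$ is null'' and turn it into a counting bound: the translates $g(E)$, indexed by $\Gamma_1/C$, are pairwise essentially disjoint with common measure $\mu(E) > 0$, so the centralizer $C$ has index at most $\mu(E)^{-1}$ in $\Gamma_1$, hence finite index in $\Gamma$. You then only need the (weaker) consequence of weak mixing that finite index subgroups act ergodically. The paper's version yields the sharper fact that $\theta$ commutes with all of $\rig{\Delta}$; your measure-counting argument is more self-contained and avoids the diagonal-action characterization of weak mixing, at the cost of only locating the centralizer inside some finite index subgroup. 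Both deliver the lemma.
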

\begin{proof}
Let $\Delta$ be a commensuration of $\Gamma \actson (X,\mu)$ and suppose that $x \cdot \Delta = x$ for all $x \in \cU$ and $\cU$ non-negligible. We have to prove that $x \cdot \Delta = x$ almost everywhere.

If $g \in \Gamma \cap \Delta^{-1} \Gamma \Delta$ and $\cU \cap \cU \cdot g^{-1}$ is non-negligible, we find
$$x \cdot \Delta g \Delta^{-1} = x \cdot g \Delta^{-1} = x \cdot g$$
for all $x \in \cU \cap \cU \cdot g^{-1}$, so that essential freeness of $\Gamma \actson (X,\mu)$ implies that $\Delta$ and $g$ commute.

Let now $g \in \rig{\Delta} = \Gamma \cap \Delta^{-1} \Gamma \Delta$ be arbitrary. Since $\rig{\Delta} < \Gamma$ has finite index, the action of $\rig{\Delta}$ on $(X,\mu)$ is still weakly mixing. So, we can take $g_1 \in \rig{\Delta}$ such that both $\cU \cap (\cU \cdot g^{-1}) \cdot g_1^{-1}$ and $\cU \cap \cU \cdot g_1^{-1}$ are non-negligible. By the previous paragraph, $\Delta$ commutes with $g_1 g$ and with $g_1$. So, $\Delta$ commutes with $g$ for all $g \in \rig{\Delta}$.

But then, for all $x \in \cU$ and all $g \in \rig{\Delta}$,
$$(x \cdot g) \cdot \Delta = x \cdot (g \Delta) = x \cdot (\Delta g) = x \cdot g \; .$$
Since $\rig{\Delta}$ acts ergodically on $(X,\mu)$, it follows that $x \cdot \Delta = x$ for almost all $x \in X$.
\end{proof}

\section{Proofs of the results announced in Section \ref{sec.main-results}} \label{sec.proofs}

\begin{proof}[About Examples \ref{ex.conditions}]
The non-trivial points to verify in \ref{ex.conditions} are the following.
\begin{itemize}
\item All the linear groups/linear actions satisfy the minimal condition on centralizers/stabilizers because in a finite dimensional vector space, there cannot be an infinite strictly decreasing sequence of vector subspaces.
\item Groups defined by a (possibly infinite) presentation satisfying the $C'(1/6)$-small cancelation condition satisfy the minimal condition on centralizers because the centralizer of any non-trivial element is cyclic, see \cite{truff}. For more or less analogous reasons, word hyperbolic groups satisfy the minimal condition on centralizers, see Example 3.2.4 in \cite{duncan}.
\item Note that $\Z^n < \SL(n,\Q) \ltimes \Q^n$ and $\PSL(n,\Z) < \PSL(n,\Q)$ are almost normal subgroups with the relative property (T).
\end{itemize}
\end{proof}

In order to treat systematically the concrete computations of fusion algebras in \ref{cor.example} and \ref{ex.hecke}, we start with the following lemma computing some commensurators of subgroups.

\begin{lemma} \label{lem.compute-comm}
\begin{itemize}
\item Let $\Gamma < \GL(n,\Q)$ be a subgroup with the following property~: if $\Gamma_0 < \Gamma$ is a finite index subgroup and if $V \subset \Q^n$ is a non-zero globally $\Gamma$-invariant vector subspace of $\Q^n$, then $V = \Q^n$.

Then, the commensurator of $\Gamma \ltimes \Q^n$ inside $\Perm(\Q^n)$ equals $\Comm_{\GL(n,\Q)}(\Gamma) \ltimes \Q^n$.

\item Let $\Lambda$ be a group that cannot be written as a non-trivial direct product and that has no non-trivial finite index subgroups. Let $\Lambda_0 < \Lambda$ be a proper subgroup with the relative ICC property.

Then, the commensurator of the left-right action of $\Lambda_0 \times \Lambda$ on $\Lambda$ is given by the permutations $g \mapsto \al(g) g_0$ for some $\al \in \Comm_{\Aut(\Lambda)}(\Ad \Lambda_0)$ and $g_0 \in \Lambda$.
\end{itemize}
\end{lemma}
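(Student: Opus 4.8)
The plan is to treat the two statements separately, in each case by analysing what a permutation $\Delta$ in the commensurator must do, using that the minimal condition / invariant-subspace hypotheses rigidify the situation.

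\emph{First bullet.} I would start by noting one inclusion is trivial: $\Comm_{\GL(n,\Q)}(\Gamma) \ltimes \Q^n$ clearly lies inside $\Comm_{\Perm(\Q^n)}(\Gamma \ltimes \Q^n)$, since $\GL(n,\Q)$ normalizes $\Q^n$ and commensuration of $\Gamma$ inside $\GL(n,\Q)$ gives commensuration of $\Gamma\ltimes\Q^n$. For the converse, take $\Delta \in \Perm(\Q^n)$ with $\Gamma_1 := (\Gamma\ltimes\Q^n) \cap \Delta^{-1}(\Gamma\ltimes\Q^n)\Delta$ of finite index in $\Gamma \ltimes \Q^n$. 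The key point is to recover the translation subgroup $\Q^n$ intrinsically. Set $\Gamma_1^0 := \Gamma_1 \cap \Q^n$; since $[\Gamma\ltimes\Q^n : \Gamma_1] < \infty$ and $\Q^n$ is abelian (divisible, torsion-free), $\Gamma_1^0$ is a finite index subgroup of $\Q^n$, hence actually \emph{equal} to $\Q^n$ (a finite index subgroup of $\Q^n$ is all of $\Q^n$, as $\Q^n$ is divisible). The same argument applied to $\Delta\Gamma_1\Delta^{-1}$ shows $\Delta \Q^n \Delta^{-1} \cap (\Gamma\ltimes\Q^n)$ contains $\Delta\Gamma_1^0\Delta^{-1}$; I need to argue $\Delta\Q^n\Delta^{-1} = \Q^n$ inside $\Perm(\Q^n)$. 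For this, characterise $\Q^n$ as a subgroup of $\Gamma\ltimes\Q^n$: I claim $\Q^n$ is the unique maximal abelian normal subgroup that acts freely and transitively on $\Q^n$ — more robustly, using the invariant-subspace hypothesis, any finite index subgroup $\Gamma_0 < \Gamma$ acts on $\Q^n$ with no nonzero proper invariant subspace, so the translation lattice is characterised as $\{$elements with trivial conjugation action on the "affine directions"$\}$. Concretely: the centralizer in $\Gamma\ltimes\Q^n$ of the finite index subgroup $\Gamma_1^0 = \Q^n$ is exactly $\Q^n$ (here the no-invariant-subspace hypothesis forces no element of $\Gamma_0$ other than $1$ to centralize a full-rank lattice, since the fixed space would be a nonzero invariant subspace). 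Since $\Delta$ conjugates a finite index subgroup of $\Gamma\ltimes\Q^n$ isomorphically into another, and this internal characterisation of $\Q^n$ is preserved, one deduces $\Delta \Q^n \Delta^{-1} = \Q^n$. Finally, once $\Delta$ normalizes the translation group $\Q^n \subset \Perm(\Q^n)$, conjugation by $\Delta$ on $\Q^n \cong \Q^n$ is an automorphism, i.e.\ an element of $\GL(n,\Q)$ (automorphisms of $\Q^n$ as an abelian group that are also in $\Perm(\Q^n)$, together with the affine structure, give $\GL(n,\Q)$ up to translation); composing with the translation by $\Delta(0)$, I may assume $\Delta(0) = 0$, so $\Delta \in \GL(n,\Q)$, and the finite-index condition says precisely $\Delta \in \Comm_{\GL(n,\Q)}(\Gamma)$.

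\emph{Second bullet.} Again one inclusion is routine: a permutation $g \mapsto \al(g)g_0$ with $\al \in \Comm_{\Aut(\Lambda)}(\Ad\Lambda_0)$ and $g_0 \in \Lambda$ visibly commensurates the left-right action. For the converse, take $\Delta \in \Perm(\Lambda)$ commensurating the left-right action of $\Lambda_0 \times \Lambda$. Composing with the right translation by $\Delta(e)^{-1}$, I may assume $\Delta(e) = e$. The relevant structural fact: the left-right action identifies $\Lambda$ with $(\Lambda_0\times\Lambda)/\{(\lambda,\lambda) : \lambda\in\Lambda_0\}$, and "$\Lambda$ acting on the right" is an almost-normal subgroup; because $\Lambda$ has no nontrivial finite index subgroups, the finite-index subgroup $(\Lambda_0\times\Lambda)\cap\Delta^{-1}(\Lambda_0\times\Lambda)\Delta$ must contain $\{e\}\times\Lambda$ (its intersection with $\{e\}\times\Lambda$ has finite index, hence equals it), and similarly contains a finite index subgroup of $\Lambda_0\times\{e\}$. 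So $\Delta$ conjugates the right-$\Lambda$ translations into the group generated by $\Lambda_0\times\Lambda$; using that $\Lambda$ is not a nontrivial direct product (to rule out $\Delta$ swapping the two "sides" or mixing them) and the relative ICC property of $\Lambda_0$ (to pin down how $\Lambda_0$ sits), I would argue $\Delta$ normalizes right translation by $\Lambda$. Then $\Delta(g) = \Delta(e)\cdot (\text{right action}) = $ determined by a bijection $\Lambda\to\Lambda$ fixing $e$ and intertwining right translations, hence $\Delta(g) = \beta(g)$ for a bijection $\beta$ with $\beta(gh)=\beta(g)\cdot(\text{something})$ — more carefully, $\Delta$ must be a group automorphism $\al$ of $\Lambda$ (a permutation fixing $e$ and commuting with right translations is left translation by nothing, so it's the identity on the right-coset structure, leaving only the left-$\Lambda_0$ part, which forces $\Delta = \al$ on the nose after the normalisation). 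The finite-index condition on the $\Lambda_0\times\{e\}$ side then reads $\al \Lambda_0 \al^{-1} \cap \Lambda_0$ finite index in both, modulo inner automorphisms, i.e.\ $\al \in \Comm_{\Aut(\Lambda)}(\Ad\Lambda_0)$. Undoing the normalisation gives $g \mapsto \al(g)g_0$.

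\emph{Main obstacle.} In both parts the routine direction and the "recover the translation / right-translation subgroup" step are straightforward; the genuine difficulty is showing $\Delta$ \emph{normalizes} that distinguished subgroup rather than merely commensurating it — i.e.\ upgrading from a finite-index intertwining to an honest one. In the first bullet this rests on the divisibility of $\Q^n$ (finite index subgroups of $\Q^n$ are everything) combined with the no-invariant-subspace hypothesis to make the centralizer characterisation of $\Q^n$ rigid; in the second it rests on $\Lambda$ having no proper finite index subgroups plus indecomposability, which is exactly why those hypotheses appear. I expect the delicate bookkeeping to be in the second bullet, ensuring the relative ICC property of $\Lambda_0$ genuinely forces $\Delta$ not to mix the two factors of $\Lambda_0\times\Lambda$; this is where I would be most careful.
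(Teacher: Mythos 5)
Your high-level plan is the right one — normalise $\Delta$ to fix a basepoint, pass to the induced group isomorphism $\delta$ between finite-index subgroups, and show that $\delta$ must send the distinguished abelian/normal subgroup (the translation subgroup $\Q^n$, respectively the right-translation copy of $\Lambda$) onto itself. The preliminary reductions (divisibility of $\Q^n$ forcing $\Gamma_1\cap\Q^n=\Q^n$; absence of proper finite-index subgroups of $\Lambda$ forcing $\{e\}\times\Lambda$ to lie in the relevant finite-index subgroup) are exactly as in the paper. However, the pivotal step in each bullet is not actually carried out, and the mechanism you propose for the first bullet does not work as stated.

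\emph{First bullet.} You propose to recognise $\Q^n$ inside a finite-index subgroup $\Gamma_1$ of $\Gamma\ltimes\Q^n$ as ``the centralizer of $\Q^n$''. That is true but circular: applying the isomorphism $\delta$ only tells you that $\delta(\Q^n)=C_{\Gamma_2}(\delta(\Q^n))$, i.e.\ that $\delta(\Q^n)$ is a self-centralizing abelian normal subgroup of $\Gamma_2$, not that it equals $\Q^n$. To make a characterisation argument succeed you would first have to prove that $\Q^n$ is the \emph{unique} abelian self-centralizing normal subgroup of $\Gamma_1$, which is a nontrivial statement (it hinges on the fact, via Kolchin's theorem and the invariant-subspace hypothesis, that $\Gamma_0$ has no nontrivial normal abelian unipotent subgroup), and you do not address it. The paper avoids a characterisation altogether: it writes $\delta(v,1)=(\pi(v),\rho(v))$ and sets $V=\ker\rho$; normality of $\delta(\Q^n)$ gives $((1-\rho(v))w,1)\in\delta(\Q^n)$ for all $v,w$, commutativity of $\delta(\Q^n)$ with itself gives $(1-\rho(v'))(1-\rho(v))=0$, whence $v\mapsto\rho(v)-1$ is additive and $V$ is a genuine $\Q$-\emph{sub}space, invariant under the linear part, and nonzero if some $\rho(v)\neq1$. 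The hypothesis then forces $V=\Q^n$. This direct computation — the normality-plus-linearity step — is precisely the ingredient your sketch is missing.

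\emph{Second bullet.} Here you correctly name indecomposability and relative ICC as the tools, and the ``easy direction'' and reduction to $\Delta(e)=e$ are fine. But the phrase ``to rule out $\Delta$ swapping the two sides or mixing them'' is where all the content lies, and it is left as a declaration rather than an argument. The paper's mechanism: project $\delta(\Lambda_1\times\{e\})$ and $\delta(\{e\}\times\Lambda)$ to the second factor of $\Lambda_2\times\Lambda$; these are two commuting subgroups whose product is $\Lambda$, and since $\Lambda$ has trivial centre (from relative ICC) and is indecomposable, one must be trivial. The favourable case gives $\delta(\Lambda_1\times\{e\})\subset\Lambda_2\times\{e\}$. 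The unfavourable case ($\delta(\{e\}\times\Lambda)\subset\Lambda_2\times\{e\}$) is ruled out by a further centraliser step showing that the diagonal $\diag\Lambda_2$ would have to project onto all of $\Lambda$ — contradicting the assumption that $\Lambda_0$ is a \emph{proper} subgroup. That use of properness of $\Lambda_0$ is essential, is nowhere in your sketch, and cannot be replaced by the phrase ``pin down how $\Lambda_0$ sits''. Once the inclusion $\delta(\Lambda_1\times\{e\})=\Lambda_2\times\{e\}$ is established, your concluding steps (centralizer to get $\delta(\{e\}\times\Lambda)=\{e\}\times\Lambda$, then $\delta=\al\times\al$, then read off the commensuration of $\Ad\Lambda_0$) are fine.

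In short: the skeleton matches the paper, but in both bullets the step that actually needs the rigidity hypotheses — recognising $\delta(\Q^n)=\Q^n$ and excluding the degenerate factor-collapsing case — is either circular or unargued. Those are the substantive parts of the lemma.
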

\begin{proof}
To prove the first item, write, maybe confusingly, $\Gamma \ltimes \Q^n = \{(v,g) \mid v \in \Q^n, g \in \Gamma\}$ acting on $\Q^n$ by $(v,g) \cdot w = v + g w$. Let $\eta$ be a permutation of $\Q^n$ in the commensurator of $\Gamma \ltimes \Q^n$.
Composing with a translation, we may assume that $\eta(0) = 0$. Since $\Q^n$ has no non-trivial finite index subgroups, we find finite index subgroups $\Gamma_0,\Gamma_1$ of $\Gamma$ and a group isomorphism $\delta : \Gamma_0 \ltimes \Q^n \recht \Gamma_1 \ltimes \Q^n$ satisfying $\eta(v + g w) = \delta(v,g) \cdot \eta(w)$ for all $g \in \Gamma_0$ and $v,w \in \Q^n$. In particular, $\delta(\Gamma_0) = \Gamma_1$ and the lemma is proven once we have shown that $\delta(\Q^n) = \Q^n$. By symmetry it suffices to show that $\delta(\Q^n) \subset \Q^n$.

Write $\delta(v,1) = (\pi(v),\rho(v))$ for all $v \in \Q^n$. Set $V = \{v \in \Q^n \mid \rho(v) = 1 \}$. We have to prove that $V = \Q^n$.
Assume that $\rho(v) \neq 1$ for some $v \in \Q^n$. Since $\delta(\Q^n)$ is a normal subgroup of $\Gamma_1 \ltimes \Q^n$, it follows that
$$((1 - \rho(v))w, 1) = (w,1) \, \delta(v,1) \, (-w,1) \, \delta(v,1)^{-1} \quad\text{belongs to}\;\; \delta(\Q^n)$$
for all $v,w \in \Q^n$. Since we assumed that $\rho(v) \neq 1$ for at least one $v \in \Q^n$, it follows that $V \neq \{0\}$. So, $V$ is a non-trivial globally $\Gamma$-invariant subgroup of $\Q^n$. We finally prove that $V$ is in fact a vector subspace of $\Q^n$. Then, our assumptions imply that $V = \Q^n$, ending the proof of the first item.

Since $((1-\rho(v))w,1) \in \delta(\Q^n)$, it follows that $((1-\rho(v))w,1)$ and $\delta(v',1)$ commute for all $v,v',w \in \Q^n$. Writing this out yields $(1-\rho(v'))(1-\rho(v)) = 0$ for all $v,v' \in \Q^n$. But then, $\gamma(v) = \rho(v) - 1$ defines an additive group homomorphism $\gamma : \Q^n \recht \M_n(\Q)$. Such a homomorphism is automatically linear and so, $V$ is a vector subspace of $\Q^n$.

To prove the second item, let $\eta \in \Perm(\Lambda)$ be in the commensurator of $\Lambda_0 \times \Lambda$. We may assume that $\eta(e) = e$. We have to prove that $\eta$ is an automorphism of $\Lambda$. By our assumptions, we find finite index subgroups $\Lambda_1,\Lambda_2 < \Lambda_0$ and an isomorphism $\delta : \Lambda_1 \times \Lambda \recht \Lambda_2 \times \Lambda$ satisfying $\eta((g,h) \cdot i) = \delta(g,h) \cdot \eta(i)$. In particular, $\delta(\diag(\Lambda_1)) = \diag(\Lambda_2)$, where $\diag(\Lambda_i)$ denotes the diagonal subgroup of $\Lambda_i \times \Lambda$.

We claim that $\delta = \al \times \al$ for some automorphism $\al \in \Aut(\Lambda)$ satisfying $\al(\Lambda_1) = \Lambda_2$. Once the claim is proven, the second item of the lemma follows immediately. To prove the claim, it suffices to prove that $\delta(\Lambda_1 \times \{e\}) = \Lambda_2 \times \{e\}$. Indeed, taking centralizers, it then follows that $\delta(\{e\} \times \Lambda) = \{e \} \times \Lambda$, yielding the automorphism $\al \in \Aut(\Lambda)$. But then, $\delta = \al \times \al$, because $\delta$ preserves the diagonal subgroups.

In order to finally prove that $\delta(\Lambda_1 \times \{e\}) = \Lambda_2 \times \{e\}$, it suffices by symmetry to prove the inclusion
$\delta(\Lambda_1 \times \{e\}) \subset \Lambda_2 \times \{e\}$. Denote by $\Gamma_1$, resp.\ $\Gamma$ the image of $\delta(\Lambda_1 \times \{e\})$, resp.\ $\delta(\{e\} \times \Lambda)$ under the projection map $\Lambda_2 \times \Lambda \recht \Lambda$. We have written $\Lambda$ as the product of two commuting subgroups $\Gamma_1$ and $\Gamma$. Since $\Lambda$ has trivial center and cannot be written as a non-trivial direct product, one of the groups $\Gamma_1$, $\Gamma$ is trivial. If $\Gamma_1$ is trivial, we are done. So, suppose that $\Gamma$ is trivial. This means that $\delta(\{e\} \times \Lambda) \subset \Lambda_1 \times \{e\}$. Again taking centralizers, we find a subgroup $\Lambda_3 < \Lambda_1$ such that $\delta(\Lambda_3 \times \{e\}) = \{e\} \times \Lambda$. Since $\delta(\{e\} \times \Lambda_3) \subset \Lambda_1 \times \{e\}$, it
follows that $\delta(\diag \Lambda_3)$ projects surjectively onto $\Lambda$. A fortiori, $\diag \Lambda_2 = \delta(\diag \Lambda_1)$ projects surjectively onto $\Lambda$. This is a contradiction, since $\Lambda_0$ is a proper subgroup of $\Lambda$.
\end{proof}

\begin{proof}[Proof of Corollary \ref{cor.example}]
As was probably noted first in \cite{vNW}, the group $\Gamma := \SL(2,\Q) \ltimes \Q^2$ does not admit non-trivial finite dimensional unitary representations. In particular, $\Gamma$ has no non-trivial finite index subgroups.
Then, Corollary \ref{cor.example} follows from Theorem \ref{thm.cartan-preserving-bimodules} and Proposition \ref{prop.commensuration-bernoulli}, once we have shown the following~: if $\eta$ is a permutation of $\Q^2$ that normalizes $\Gamma$ and satisfies $\Omega_\al \sim \Omega_\al \circ \Ad \eta$ as scalar $2$-cocycles on $\Gamma$, then $\eta$ belongs to $\Gamma$.

Lemma \ref{lem.compute-comm} says that $\eta = (g_0,v_0) \in \GL(2,\Q) \ltimes \Q^2$. A small computation yields $\Omega_\al \circ \Ad \eta \sim \Omega_{(\det g_0) \al}$. So, $\Omega_\al \sim \Omega_{(\det g_0) \al}$ as scalar $2$-cocycles on $\Gamma$ and in particular as scalar $2$-cocycles on $\Q^2$. It is well known, and even directly computable, that this implies $\al = (\det g_0) \al$. Since $\al \neq 0$, we conclude that $\det g_0 = 1$ and so $\eta \in \Gamma$.
\end{proof}

\begin{proof}[Proof of Theorem \ref{thm.fusion-algebra}]
By Theorem \ref{thm.cartan-preserving-bimodules}, $\FAlg(M)$ equals the fusion algebra of elementary $M$-$M$-bimodules in the sense of Definition \ref{def.elementary}. Theorem \ref{thm.fusion-elementary} says that the fusion algebra of elementary $M$-$M$-bimodules is exactly given as the extended Hecke fusion algebra $\He(\Gamma < G)$, where $G$ denotes the commensurator of $\Gamma$ inside $\Aut(X,\mu)$. By Proposition \ref{prop.commensuration-bernoulli} and because $(X_0,\mu_0)$ is assumed to be atomic with unequal weights, the latter is isomorphic with the commensurator of $\Gamma$ inside $\Perm(I)$.
\end{proof}

\begin{proof}[Proof of Example \ref{ex.hecke}]
In all three examples, we use the following principle~: let $\Gamma_1 < G_1$ be a Hecke pair and $\pi : G_1 \recht G$ a surjective homomorphism satisfying $\Ker \pi \subset \Gamma_1$. Set $\Gamma := \pi(\Gamma)$ and note that $\Gamma < G$ is again a Hecke pair. Assume moreover that every finite dimensional unitary representation of $\Gamma_1$ is trivial on $\Ker \pi$. Then, $\He(\Gamma_1 < G_1) \cong \He(\Gamma < G)$.

{\it Example \ref{ex.hecke}.1.}\ By Theorem \ref{thm.fusion-algebra} and Lemma \ref{lem.compute-comm}, $\FAlg(M) \cong \He\bigl((\SL(n,\Z) \ltimes \Q^n) < (\GL(n,\Q) \ltimes \Q^n)\bigr)$. We claim that the latter is isomorphic with $\He(\SL(n,\Z) < \GL(n,\Q))$. Because of the principle above, it is sufficient to show that every finite dimensional unitary representation $\pi$ of $\SL(n,\Z) \ltimes \Q^n$ is trivial on $\Q^n$. The restriction of $\pi$ to $\Q^n$ is the direct sum of group characters of $\Q^n$ belonging to a finite subset $S \subset \widehat{\Q^n}$. We have to prove that $S = \{1\}$. Since $S$ is finite, every $\om \in S$ is invariant under a finite index subgroup $\Gamma < \SL(n,\Z)$, meaning that $\om((1-g)x) = 1$ for all $g \in \Gamma$ and $x \in \Q^n$. All sums of elements of the form $(1-g)x$, for $g \in \Gamma$ and $x \in \Q^n$, form a vector subspace of $\Q^n$. If $\om \neq 1$, this vector subspace is not the whole of $\Q^n$ and we find a non-zero $y \in \Q^n$ such that $g\transp y = y$ for all $g \in \Gamma$. This contradicts the fact that $\Gamma$ has finite index in $\SL(n,\Z)$.

{\it Example \ref{ex.hecke}.2.}\ By \cite{SW}, $\Aut(\PSL(n,\Q)) = \Z/2\Z \ltimes \PGL(n,\Q)$, where $\Z/2\Z$ acts by the order $2$ automorphism $\delta(A) = (A\transp)^{-1}$. The result is then a combination of Theorem \ref{thm.fusion-algebra}, Lemma \ref{lem.compute-comm} and the above mentioned principle. On the way, one uses once more that $\PSL(n,\Q)$ has no non-trivial finite dimensional unitary representations.

{\it Example \ref{ex.hecke}.3.}\ Set $\Gamma := \Lambda_0 \times \Lambda$. By Lemma \ref{lem.compute-comm}, the inclusion $\Gamma < \Comm_{\Perm(\Lambda)}(\Gamma)$ is isomorphic with $\Gamma < G$, where
$$G = \{(\rho, \rho \circ \Ad g) \mid \rho \in \Comm_{\Aut(\Lambda)}(\Lambda_0), g \in \Lambda \} \; .$$
But, $\Aut(\Lambda) = \GL(2,\Q) \ltimes \Q^2$. Moreover, for all $\rho \in \GL(2,\Q) \ltimes \Q^2$, we have $\Omega_\al \circ \rho = \Omega_{(\det \rho) \al}$ and $\Lambda$ has no non-trivial finite dimensional unitary representations. A combination of Thms.\ \ref{thm.cartan-preserving-bimodules}, \ref{thm.fusion-elementary}, Prop.\ \ref{prop.commensuration-bernoulli} and the above principle, implies that $\FAlg(M) \cong \He(\Lambda_0 < \Comm_\Lambda(\Lambda_0))$.

A small computation shows that $\Comm_\Lambda(\Lambda_0)$ consists of the elements $\bigl(\bigl(\begin{smallmatrix} q & 0 \\ 0 & q^{-1} \end{smallmatrix}\bigr) , \bigl(\begin{smallmatrix} x \\ y \end{smallmatrix}\bigr)\bigr)$ with $q \in \Q^*$ and $x,y \in \Q$. Note that
$\bigl(\bigl(\begin{smallmatrix} 0 & 1 \\ -1 & 0 \end{smallmatrix}\bigr) , \bigl(\begin{smallmatrix} x \\ y \end{smallmatrix}\bigr)\bigr)$ is excluded from this commensurator because $R < \Q$ has infinite index as an additive subgroup. Finally, consider the quotient homomorphism
$$\Comm_\Lambda(\Lambda_0) \recht \Q^* \ltimes \Q : \bigl(\bigl(\begin{smallmatrix} q & 0 \\ 0 & q^{-1} \end{smallmatrix}\bigr) , \bigl(\begin{smallmatrix} x \\ y \end{smallmatrix}\bigr)\bigr) \mapsto (q,x) \; .$$
We shall apply the principle above to conclude that $\FAlg(M) = \He\bigl((R^* \ltimes R) < (\Q^* \ltimes \Q) \bigr)$. In order to do so, we have to show that every finite dimensional unitary representation of $\Lambda_0$ factorizes through $R^* \ltimes R$. It suffices to show that every finite dimensional unitary representation $\pi$ of $R^* \ltimes \Q$ is trivial on $\Q$. The restriction of such a $\pi$ to $\Q$ is a finite direct sum of group characters $\om \in S \subset \Qh$. It follows that the finite set $S$ is globally invariant under $R^*$. But $R^*$ acts freely on $\Qh - \{1\}$, implying that $S = \{1\}$. So, $\pi$ is trivial on $\Q$.
\end{proof}

\begin{proof}[Proof of Corollary \ref{cor.out}]
Theorem \ref{thm.cartan-preserving-bimodules} implies that, up to inner automorphisms, every automorphism of $M$ is given by a character of $\Gamma$ and an element in the normalizer of $\Gamma$ inside $\Aut(X,\mu)$. This normalizer is determined in Proposition \ref{prop.commensuration-bernoulli}, yielding the result in Corollary \ref{cor.out}.
\end{proof}

\begin{proof}[About Examples \ref{ex.out}]
In the first example, set $\Gamma := \PSL(n,\Z)$. Note that for $n$ odd, $\PSL(n,\Z) = \SL(n,\Z)$. By Example 2.6.1 in \cite{Borel}, $\Out(\Gamma)$ has two elements, the non-trivial one being given by $\al : A \mapsto (A\transp)^{-1}$. Since there is no permutation $\eta$ of $\rP(\Q^n)$ satisfying $\eta(A v) = \al(A) \eta(v)$ for all $A \in \Gamma$, $v \in \rP(\Q^n)$, we conclude that the normalizer of $\Gamma$ inside $\Perm(\rP(\Q^n))$ equals $\Gamma$. Because $\Gamma$ has no non-trivial characters, the conclusion follows from Corollary \ref{cor.out}.

As in the second item of Lemma \ref{lem.compute-comm} and using the example in the previous paragraph, the normalizer of $\Gamma := \PSL(n,\Z) \times \Lambda \times \Lambda$ inside $\Perm(\rP(\Q^n) \times \Lambda)$ is generated by $\Gamma$, $\{\id\} \times \Aut(\Lambda)$ and the permutation $(v,g) \mapsto (v, g^{-1})$. The conclusion follows again from Corollary \ref{cor.out}.
\end{proof}

\begin{proof}[Proof of Theorem \ref{thm.out-countable}]
Let $Q$ be a given countable group. Bumagin and Wise construct in \cite{Bum-Wise} a countable group $\Lambda$ with the following properties.
\begin{itemize}
\item $\Out(\Lambda) \cong Q$.
\item $\Lambda$ is a subgroup of a $C'(1/6)$-small cancelation group and $\Lambda$ is not virtually cyclic. In particular, $\Lambda$ is ICC and satisfies the minimal condition on centralizers (see \ref{ex.conditions}). Also, the centralizer in $\Lambda$ of any non-cyclic subgroup, is trivial.
\item Slightly modifying the construction of \cite{Bum-Wise}, by adding relations that make it impossible to have non-trivial abelian quotients, we may also assume that $\Char \Lambda = \{1\}$.
\end{itemize}
Let a finite group $H$ act by permutations of a finite set $J$, in such a way that $\Char H = \{1\}$ and that the normalizer of $H$ inside $\Perm(J)$ equals $H$. A concrete example is provided in Lemma 7.8 in \cite{PV} as the linear action of $H = \GL(3,F_2)$ on $J = F_2^3 \setminus \{0\}$, where $F_2$ denotes the field with two elements. We then consider the action $\Gamma \actson I$, defined as the direct product of the actions  $$\PSL(3,\Z) \actson \rP(\Q^3) \quad\text{and}\quad ((\Lambda^J \rtimes H) \times \Lambda) \; \actson \; \Lambda^J \; .$$
In this expression, $\Lambda^J \rtimes H$ acts on the left on $\Lambda^J$, while $\Lambda$ acts diagonally on the right.

In order to show that $\Gamma \actson I$ is a good action of a good group, we have to prove that the action $((\Lambda^J \rtimes H) \times \Lambda) \actson \Lambda^J$ satisfies conditions (C1), (C2) and (C3) in Definition \ref{def.conditions}. Conditions (C1) and (C2) are immediately given by the ICC property and the minimal condition on centralizers for $\Lambda$. Condition (C3) is checked as follows~: if $\si \in H$ is different from the identity, $\Fix((g,\si),h)$ has infinite index because the diagonal subgroup $\diag \Lambda < \Lambda \times \Lambda$ has infinite index. When $\si = e$, but $(g,h) \neq e$, we again have $\Fix((g,e),h)$ of infinite index, because $\Lambda$ is an ICC group.

Define $M$ as the generalized Bernoulli II$_1$ factor associated with $\Gamma \actson I$ and an atomic base space with unequal weights. Corollary \ref{cor.out} yields $\Out(M) \cong G/\Gamma$, where $G$ denotes the normalizer of $\Gamma$ inside $\Perm(I)$. In order to determine this normalizer, first make the following easy observation. Let $(\Gamma_i)_{i \in \cF}$ and $(\Lambda_j)_{j \in \cK}$ be finite families of ICC groups with the property that they do not contain a non-trivial direct product as a finite index subgroup. Then, for every injective homomorphism $\theta : \bigoplus_{i \in \cF} \Gamma_i \recht \bigoplus_{j \in \cK} \Lambda_j$ with finite index image, there exists a bijection $\si : \cF \recht \cK$ satisfying $\theta(\Gamma_i) \subset \Lambda_{\si(i)}$. With a reasoning similar to Lemma \ref{lem.compute-comm}, we deduce that the normalizer $G$ of $\Gamma$ inside $\Perm(I)$, is generated by $\Gamma$ and $\Aut(\Lambda)$. Here, $\Aut(\Lambda)$ is viewed as acting diagonally on $\Lambda^J$. So, $\Out(M) \cong G/\Gamma \cong \Out(\Lambda) \cong Q$ and we are done.
\end{proof}

\section*{Appendix A~: Inclusions of (essentially) finite index and approximations of conditional expectations}\renewcommand{\thesection}{A}\setcounter{definition}{0}

Let $A \subset (M,\tau)$ be an inclusion of tracial von Neumann algebras. \emph{Jones' basic construction} is defined as the von Neumann algebra $\langle M,e_A \rangle$ acting on $\rL^2(M,\tau)$ generated by $M$ and the \emph{Jones projection} $e_A$ defined by $e_A x = E_A(x)$ for all $x \in M$. Here we view $A \subset M \subset \rL^2(M,\tau)$ and $E_A$ is the unique $\tau$-preserving conditional expectation. We make the following well known observations.
\begin{itemize}
\item The von Neumann algebra $\langle M,e_A \rangle$ equals the commutant of the right action of $A$ on $\rL^2(M,\tau)$.
\item The projection $e_A$ commutes with $A$ inside $\langle M,e_A \rangle$ and further we have $e_A x e_A = E_A(x) e_A$ for all $x \in M$. It follows that the linear span of the elements $x e_A y$ with $x,y \in M$ is a dense $^*$-subalgebra of $\langle M,e_A \rangle$.
\end{itemize}
The basic construction $\langle M,e_A \rangle$ comes equipped with a semifinite faithful normal trace $\Tr$ characterized by the formula
$$\Tr (x e_A y) = \tau(xy) \quad\text{for all}\;\; x,y \in M \; .$$
The \emph{Jones index} of the inclusion $A \subset (M,\tau)$ satisfies $[M:A] = \Tr(1)$. The value of $[M:A]$ depends on the choice of tracial state $\tau$. In this article, there will be in all circumstances a natural choice of tracial state, either given by an ambient II$_1$ factor, either as the natural tracial state on $\cL(\Gamma)$. So, when we speak about a \emph{finite index inclusion}, it is always with respect to the naturally present state. In Definition \ref{prop.essential} we will moreover see that this kind of subtlety is not really crucial.

As right $A$-modules, $\rL^2(M)_A \cong p (\ell^2(\N) \ot \rL^2(A))_A$ for some projection $p \in \B(\ell^2(\N)) \ovt A$. One verifies that $[M:A] = (\Tr \ot \tau)(p)$. A canonical index for $A \subset M$ would rather be given by the $\cZ(A)$-valued trace of the right $A$-module $\rL^2(M)_A$.

For completeness, we give a proof of the following elementary lemma.
\begin{lemma} \label{lem.finite-index}
Let $A \subset (M,\tau)$.
\begin{itemize}
\item Suppose that $\rL^2(M)$ is generated as a right $A$-module by $n$ vectors $\xi_1,\ldots,\xi_n \in \rL^2(M)$, meaning that $\rL^2(M)$ is the closure of $\xi_1 A + \cdots + \xi_n A$. Then, $[M:A] \leq n$.
\item If $[M: A] < \infty$ and $\eps > 0$, there exists a central projection $z \in \cZ(A)$ satisfying $\tau(z) > 1- \eps$ such that $\rL^2(Mz)$ is finitely generated as a right $A$-module (and in the sense of the previous item).
\end{itemize}
\end{lemma}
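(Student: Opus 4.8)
The plan is to work throughout in the standard picture $\rL^2(M)_A \cong p(\ell^2(\N) \ot \rL^2(A))$ for a projection $p \in \B(\ell^2(\N)) \ovt A$, with $[M:A] = \dim(\rL^2(M)_A) = (\Tr \ot \tau)(p)$, and to use the elementary bookkeeping for right $A$-modules: additivity of $\dim$ under direct sums, continuity of $\dim$ along increasing unions of submodules, the identity $\dim(L/K)_A = \dim(L)_A - \dim(K)_A$ for a closed submodule $K\subset L$ with finite dimensions, and the fact that a bounded right-$A$-linear map with dense range does not increase $\dim$ (via the partial isometry in its polar decomposition, which is automatically right-$A$-linear).

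For the first item, I would first reduce to the case that each $\xi_i$ is a right-$A$-bounded vector: transporting $\xi_i$ into $\ell^2(\N) \ot \rL^2(A)$ and letting $e_m \in A$ be the spectral projections of the associated element of $L^1(A)^+$, the vectors $\xi_i e_m$ are bounded and $\overline{\xi_i e_m A} \nearrow \overline{\xi_i A}$, so by continuity of $\dim$ it suffices to treat bounded $\xi_i$. In that case the assignment $e_i \ot \hat a \mapsto \xi_i a$ extends to a bounded right-$A$-linear map $T \colon \C^n \ot \rL^2(A) \recht \rL^2(M)$ with dense range, so
\[
[M:A] = \dim(\rL^2(M)_A) = \dim\bigl(\overline{\mathrm{ran}\,T}_A\bigr) \le \dim\bigl((\C^n \ot \rL^2(A))_A\bigr) = n \; ,
\]
which is the claim.

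For the second item, let $D$ be the $\cZ(A)$-valued dimension of the right $A$-module $\rL^2(M)$; it lies in the positive part of $L^1(\cZ(A),\tau)$ and satisfies $\tau(D) = \dim(\rL^2(M)_A) = [M:A] < \infty$. Fix $N$ with $[M:A]/N < \eps$ and put $z = 1_{[0,N]}(D) \in \cZ(A)$; Chebyshev's inequality gives $\tau(1-z) \le [M:A]/N < \eps$. Since $\cZ(A) = \cZ(\B(\ell^2(\N)) \ovt A)$, right multiplication by $z$ is central, so $\rL^2(Mz) = \rL^2(M)z$ corresponds to $(1 \ot z)p\,(\ell^2(\N) \ot \rL^2(A))$, and its $\cZ(A)$-valued dimension is $zD \le N\cdot 1$, i.e.\ it is dominated by the $\cZ(A)$-valued dimension of $\C^N \ot \rL^2(A)$. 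By comparison of projections in $\B(\ell^2(\N)) \ovt A$ relative to the center, $\rL^2(Mz)$ therefore embeds as a closed right-$A$-submodule of $\C^N \ot \rL^2(A)$; cutting the standard generators $e_i \ot \hat 1$ $(1 \le i \le N)$ by the projection onto that submodule exhibits $\rL^2(Mz)$ as generated by at most $N$ vectors over $A$, as required.

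The one genuinely nontrivial input is the comparison step in the second item: a projection whose $\cZ(A)$-valued dimension is dominated by that of $e_N \ot 1$ is Murray--von Neumann subequivalent to $e_N \ot 1$ inside $\B(\ell^2(\N)) \ovt A$. This is standard comparison theory — the $\cZ(A)$-valued dimension is a complete invariant for subequivalence of projections with finite-valued $\cZ(A)$-valued trace, $A$ being finite — but it is precisely the point where the hypothesis $[M:A] < \infty$ alone fails to force finite generation on all of $M$ (the fibers can have unbounded rank on an arbitrarily small but non-null set), which is why one must pass to the cutdown $Mz$ with $\tau(z) > 1 - \eps$. Everything else is routine module bookkeeping.
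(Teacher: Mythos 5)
Your argument is correct, but it works in the module picture $\rL^2(M)_A\cong p(\ell^2(\N)\ot\rL^2(A))$ and the center-valued trace on $\B(\ell^2(\N))\ovt A$, whereas the paper's proof is carried out entirely inside the Jones basic construction $\langle M,e_A\rangle$; the two are anti-isomorphic but distinct packagings. For the first item the paper polar-decomposes the closable operator $x\mapsto\xi E_A(x)$ on $\rL^2(M)$: the resulting partial isometry $v\in\langle M,e_A\rangle$ has $vv^*$ equal to the projection onto $\overline{\xi A}$ and $v^*v\leq e_A$, so $\Tr(vv^*)\leq\Tr(e_A)=1$, and summing over the $n$ generators gives $\Tr(1)\leq n$ at once. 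The detour you take through $A$-bounded vectors, with its appeal to continuity of $\dim$ along increasing submodules, exists only because you insist on producing an honestly bounded operator $\C^n\ot\rL^2(A)\to\rL^2(M)$; composing with $E_A$ as the paper does makes that reduction unnecessary. For the second item the paper exploits three structural facts about the basic construction --- that $\Tr$ is finite, that $\cZ(\langle M,e_A\rangle)=J\cZ(A)J$, and that $e_A$ has full central support --- to produce a central projection $JzJ$ with $\tau(z)>1-\eps$ together with partial isometries $v_i$ satisfying $JzJ=\sum_i v_i e_A v_i^*$; this is precisely the data that your Chebyshev cutoff on the $\cZ(A)$-valued dimension $D$ followed by the subequivalence $p(1\ot z)\preceq e_N\ot 1$ produces on the other side of the anti-isomorphism $\langle M,e_A\rangle\cong p(\B(\ell^2(\N))\ovt A)p$. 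Your framing is perhaps conceptually cleaner (the Chebyshev bound on the essential supremum of the fibrewise index explains transparently why one must pass to the cutdown $Mz$), but it invokes comparison theory for finite projections in a properly infinite semifinite algebra as a black box, whereas the paper's route grounds that comparison concretely in the finiteness of $\langle M,e_A\rangle$.
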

\begin{proof}
Let $\xi \in \rL^2(M)$ and denote by $p \in \langle M,e_A \rangle$ the orthogonal projection onto the closure of $\xi A$. The densely defined operator $M \subset \rL^2(M) \recht \rL^2(M) : x \mapsto \xi E_A(x)$ is closable and the polar decomposition of its closure yields a partial isometry $v \in \langle M,e_A \rangle$ satisfying $vv^* = p$ and $v^*v \leq e_A$. It follows that $\Tr(p) = \Tr(vv^*) = \Tr(v^* v) \leq \Tr(e_A) = 1$. If now $\rL^2(M)$ is generated by $\xi_1,\ldots,\xi_n$, we find in this way projections $p_1,\ldots,p_n$ in $\langle M,e_A \rangle$ satisfying $\Tr(p_i) \leq 1$ for all $i$ and $1 = p_1 \vee \cdots \vee p_n$. So, $\Tr(1) \leq n$ and hence by definition $[M:A] < \infty$.

Suppose now that $[M:A] < \infty$. Denote by $J : \rL^2(M) \recht \rL^2(M)$ the anti-unitary given by $J x = x^*$ for all $x \in M$. We know that $\Tr$ defines a finite faithful normal trace on $\langle M,e_A \rangle$, which is hence a finite von Neumann algebra. Moreover, the center of $\langle M,e_A \rangle$ is given by $J \cZ(A) J$ and $e_A$ is a projection with central support equal to $1$ in $\langle M,e_A \rangle$. Given $\eps > 0$, it follows that we can take a projection $z \in \cZ(A)$ and a finite number of partial isometries $v_1,\ldots,v_n \in \langle M,e_A \rangle$ satisfying $\tau(z) > 1 - \eps$ and $J z J = \sum_{i=1}^n v_i e_A v_i^*$. Viewing $1 \in M$ as a vector in $\rL^2(M)$ and $v_i$ as an operator on $\rL^2(M)$, define $\xi_i = v_i(1)$. We find that $\rL^2(Mz) = JzJ \rL^2(M)$ is generated by $\xi_1,\ldots,\xi_n$ as a right $A$-module.
\end{proof}

We have two reasons to introduce a wider notion of \lq finite index inclusion\rq. This other notion has two advantages~: it is independent of the choice of traces involved and arbitrary direct sums of finite index inclusions remain, what we will call, \emph{essentially of finite index}. The following proposition has nothing new in it~: indeed the construction of the $y_i$ in point~3 below, repeats the construction of a \emph{Pimsner-Popa basis} of a finite index inclusion, see Proposition 1.3 in \cite{PimPo}.

\begin{defprop} \label{prop.essential}
Let $A \subset (M,\tau)$. We say that $A \subset M$ \emph{is essentially of finite index} if one of the following equivalent conditions hold.
\begin{enumerate}
\item For every $\eps > 0$, there exists a projection $p \in M \cap A'$ such that $\tau(p) > 1 - \eps$ and $[pMp : Ap] < \infty$ (w.r.t.\ the trace $\tau(p)^{-1} \tau(\cdot)$ on $pMp$).
\item The trace $\Tr$ on $\langle M,e_A \rangle$ is semifinite on $M' \cap \langle M,e_A \rangle$.
\item For every $\eps > 0$, there exists a projection $p \in M \cap A'$ with $\tau(p) > 1 - \eps$ and elements $y_1,\ldots,y_n \in Mp$ satisfying
$$x p = \sum_{i=1}^n y_i E_A(y_i^* x) \quad\text{for all}\;\; x \in M \; .$$
\end{enumerate}
\end{defprop}
\begin{proof}[Proof of the equivalence of the three conditions]
Denote as above by $J : \rL^2(M) \recht \rL^2(M)$ the anti-unitary given by $J x = x^*$ for all $x \in M$. Note that $M' \cap \langle M,e_A \rangle = J(M \cap A') J$.

Suppose that 1 holds and choose $\eps > 0$. Take $p \in M \cap A'$ with $\tau(p) > 1 - \eps$ and $[pMp : Ap] < \infty$. It follows from Lemma \ref{lem.finite-index} that, after making $p$ slightly smaller but keeping $\tau(p) > 1- \eps$, we have $\rL^2(Mp)$ finitely generated as a right $A$-module. Since $\rL^2(Mp) = JpJ \rL^2(M)$ an argument identical to the first part of the proof of Lemma \ref{lem.finite-index} shows that $\Tr(JpJ) < \infty$. So, we have proven 2.

Suppose that 2 holds and choose $\eps > 0$. Take a projection $p \in M \cap A'$ with $\tau(p) > 1 - \eps$ and $\Tr(JpJ) < \infty$. Then, the formula $x \mapsto \Tr(x JpJ)$ defines a finite trace on $M$. Cutting $p$ with a projection in $\cZ(M)$, but keeping $\tau(p) > 1 - \eps$, we may suppose that $\Tr(x JpJ) \leq \lambda \tau(x)$ for all $x \in M^+$ and some $\lambda > 0$. Finally, cutting $JpJ$ with a projection in $\cZ(\langle M,e_A \rangle) = J \cZ(A) J$ and keeping $\tau(p) > 1-\eps$, we may assume the existence of partial isometries $v_1,\ldots,v_n \in \langle M,e_A \rangle$ satisfying $JpJ = \sum_{i=1}^n v_i e_A v_i^*$. As in the proof of Lemma \ref{lem.finite-index}, we consider $v_i$ as on operator on $\rL^2(M)$ and define $\xi_i \in \rL^2(M)$ such that $\xi_i = v_i(1)$.

We claim that in fact $\xi_i \in Mp$. First note that $\langle a , \xi_i \rangle = \Tr(e_A a^* v_i)$ for all $a \in M$. To prove that $\xi_i \in M$, it is sufficient to check that $(a,b) \mapsto \langle ab^* ,\xi_i \rangle$ is a bounded sesquilinear form on $\rL^2(M)$. This is the case because of
\begin{align*}
|\Tr(e_A b a^* v_i )|^2 & = |\Tr(e_A b a^* v_i e_A)|^2  \leq \Tr(e_A bb^* e_A) \, \Tr(a^* v_i e_A v_i^* a) \\ & \leq \|b\|_2^2 \, \Tr(a^* JpJ a) \leq \lambda \, \|a\|_2^2 \, \|b\|_2^2 \; .
\end{align*}
Since $v_i = JpJ v_i$ and hence $JpJ \xi_i = \xi_i p$, we conclude that $\xi_i \in Mp$. Write $y_i = \xi_i$. One checks that $v_i e_A = y_i e_A$, so that we have shown that
$$JpJ = \sum_{i=1}^n y_i e_A y_i^* \; .$$
This is exactly 3.

If 3 holds, we clearly find for every $\eps > 0$ a projection $p \in M \cap A'$ such that $\tau(p) > 1 - \eps$ and such that $\rL^2(Mp)$ is finitely generated as a right $A$-module. Point 1 then follows from Lemma \ref{lem.finite-index}.
\end{proof}

Also the following lemma is well known, but we include a proof for the convenience of the reader.

\begin{lemma} \label{lem.index-relative-commutant}
Let $A \subset B \subset (M,\tau)$.
\begin{itemize}
\item $[M \cap A' : B \cap A'] \leq [M:B]$.
\item If $B \subset M$ is essentially of finite index, also $B \cap A' \subset M \cap A'$ is essentially of finite index.
\end{itemize}
\end{lemma}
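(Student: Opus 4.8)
## Proof plan for Lemma \ref{lem.index-relative-commutant}

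The plan is to deduce both statements from the standard fact that Jones' basic construction is well behaved under restriction to relative commutants, essentially by realizing the basic construction of $B\cap A'\subset M\cap A'$ as a corner of the basic construction of $B\subset M$.

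\textbf{First statement.} I would start from the basic construction $\langle M,e_B\rangle$ acting on $\rL^2(M)$, with its semifinite trace $\Tr$ satisfying $\Tr(x e_B y)=\tau(xy)$ and $[M:B]=\Tr(1)$. Let $e$ be the orthogonal projection of $\rL^2(M)$ onto $\overline{\rL^2(M\cap A')}$; equivalently $e$ is the Jones projection for the inclusion $(M\cap A')^{\dpr}$ computed inside $M$, but what matters is that $e$ commutes with the right action of $M\cap A'$ and that $e\langle M,e_B\rangle e$ acts on $\rL^2(M\cap A')$. The key point is the identity $e\, e_B\, e = e_{B\cap A'}\, e$ as operators on $\rL^2(M\cap A')$: for $x\in M\cap A'$ one has $E_B(x)\in B\cap A'$ (since $E_B$ is $A$-bimodular and $x$ commutes with $A$), so $e_B$ restricted to $\rL^2(M\cap A')$ is exactly $e_{B\cap A'}$. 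Hence the compression $e\langle M,e_B\rangle e$ contains $\langle M\cap A',\, e_{B\cap A'}\rangle$, and the trace $\Tr$ restricted there dominates (in fact equals, after normalization) the canonical trace of the basic construction of $B\cap A'\subset M\cap A'$. Since $1\in\rL^2(M\cap A')$, the unit of $\langle M\cap A',e_{B\cap A'}\rangle$ is $e$ viewed inside $e\langle M,e_B\rangle e$, and $\Tr(e)\le \Tr(1)=[M:B]$ because $e\le 1$. Reading off $[M\cap A':B\cap A'] = \Tr_{B\cap A'}(e_{B\cap A'}\text{-construction}) \le \Tr(e)\le [M:B]$ gives the inequality. (One small check: that the normalized trace on the smaller basic construction is the restriction of $\Tr$, not merely dominated by it — this follows from the defining formula $\Tr(x e_{B\cap A'} y)=\tau(xy)$ for $x,y\in M\cap A'$, which agrees with $\Tr(x e_B y)$ on such elements via the identity $e\,e_B\,e=e_{B\cap A'}e$ and $e_{B\cap A'}\le e$.)

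\textbf{Second statement.} Here I would use characterization (3) in Proposition \ref{prop.essential}. Given $\eps>0$, since $B\subset M$ is essentially of finite index, pick a projection $p\in M\cap B'$ with $\tau(p)>1-\eps$ and $y_1,\dots,y_n\in Mp$ with $xp=\sum_i y_i E_B(y_i^*x)$ for all $x\in M$. The obstacle is that $p$ need not lie in $A'$ and the $y_i$ need not lie in $M\cap A'$. To fix the projection: average $p$ over the (say, separable) von Neumann algebra $A$; concretely, replace $p$ by a spectral projection $q\in\mathcal Z(A)'\cap\cdots$ — more cleanly, since $p\in M\cap B'$ and we may first shrink $p$ inside $\mathcal Z(M\cap B')$, and then observe that $\langle M,e_B\rangle$'s structure gives a projection in $M\cap A'\cap B'$ of trace $>1-\eps$; this is exactly where one re-runs the argument of Proposition \ref{prop.essential}(1)$\Leftrightarrow$(2), now inside the inclusion $B\cap A'\subset M\cap A'$, using the compression identity from the first part. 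Indeed, from $\Tr$ semifinite on $M'\cap\langle M,e_B\rangle = J(M\cap B')J$ one gets, after compressing by $e$, that the trace of the basic construction of $B\cap A'\subset M\cap A'$ is semifinite on $(M\cap A')'\cap\langle M\cap A', e_{B\cap A'}\rangle$, which is condition (2) of Proposition \ref{prop.essential} for that inclusion. So $B\cap A'\subset M\cap A'$ is essentially of finite index.

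\textbf{Main obstacle.} The routine parts are the bimodularity juggling; the one genuinely delicate point is establishing the operator identity $e\, e_B\, e = e_{B\cap A'}\, e$ cleanly and checking it identifies $\langle M\cap A',e_{B\cap A'}\rangle$ with a corner of $\langle M,e_B\rangle$ on the nose (including the trace), since everything downstream — both the index inequality and the semifiniteness transfer — is just functoriality once that identification is in place. I expect this to take a short paragraph of careful but standard von Neumann algebra bookkeeping.
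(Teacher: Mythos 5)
There is a real gap in the first step, and it propagates through the whole argument. You take $e$ to be the orthogonal projection of $\rL^2(M)$ onto $\overline{\rL^2(M\cap A')}$ and treat $e\langle M,e_B\rangle e$ as a corner of $\langle M,e_B\rangle$, writing $\Tr(e)\le\Tr(1)$. But $e$ does \emph{not} lie in $\langle M,e_B\rangle$: that algebra is the commutant of the right $B$-action on $\rL^2(M)$, and $e$ commutes with right multiplication by $B$ only if $(M\cap A')B\subset\overline{M\cap A'}$, i.e.\ essentially only when $B\subset A'$. (You note $e$ commutes with the right $M\cap A'$-action, which is true but not what is needed.) Consequently $\Tr(e)$ is undefined, the map $x\mapsto exe$ is not multiplicative on $\langle M,e_B\rangle$, so $e\langle M,e_B\rangle e$ is not a von Neumann subalgebra and does not inherit a trace by restriction. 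The inequality you want to read off, and likewise the ``semifiniteness after compressing by $e$'' used for the second bullet, are therefore not justified.

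The ingredient you isolate --- $E_B(x)=E_{B\cap A'}(x)$ for $x\in M\cap A'$ --- is exactly the right one, but it should be used to build an embedding rather than a compression. Define $\psi$ on the dense $^*$-subalgebra of $\langle M\cap A',e_{B\cap A'}\rangle$ spanned by $x\,e_{B\cap A'}\,y$ (with $x,y\in M\cap A'$) by $\psi(x\,e_{B\cap A'}\,y)=x\,e_B\,y$. The identity $E_B|_{M\cap A'}=E_{B\cap A'}$ shows this is a well-defined $^*$-homomorphism and that it preserves the canonical traces ($\Tr'(x\,e_{B\cap A'}\,y)=\tau(xy)=\Tr(x\,e_B\,y)$), hence it extends to a, generally non-unital, $\Tr$-preserving embedding $\psi:\langle M\cap A',e_{B\cap A'}\rangle\hookrightarrow\langle M,e_B\rangle$. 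Now $\psi(1)$ \emph{is} a projection in $\langle M,e_B\rangle$ and $\psi(1)\le 1$, so $[M\cap A':B\cap A']=\Tr'(1)=\Tr(\psi(1))\le\Tr(1)=[M:B]$. For the second bullet, you do not need to re-run the argument in the basic construction at all: given $\eps>0$ take $p\in M\cap B'$ with $\tau(p)>1-\eps$ and $[pMp:Bp]<\infty$; since $M\cap B'\subset(M\cap A')\cap(B\cap A')'$, the projection $p$ already commutes with $B\cap A'$ and lies in $M\cap A'$, and applying the first bullet to $Ap\subset Bp\subset pMp$ gives $[p(M\cap A')p:(B\cap A')p]\le[pMp:Bp]<\infty$, which is exactly condition (1) of Proposition \ref{prop.essential} for $B\cap A'\subset M\cap A'$.
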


\begin{proof}
Observe that $E_B(x) = E_{B \cap A'}(x)$ whenever $x \in M \cap A'$.
So, the map $\psi(x e_{B \cap A'} y) = x e_B y$ for $x,y \in M \cap A'$ extends to a, possibly non-unital, $\Tr$-preserving embedding $\psi : \langle M \cap A',e_{B \cap A'}\rangle \recht \langle M,e_B \rangle$. It follows that
$$[M \cap A' : B \cap A'] = \Tr_{\langle M \cap A',e_{B \cap A'}\rangle}(1) = \Tr_{\langle M,e_B \rangle}(\psi(1)) \leq \Tr_{\langle M,e_B \rangle}(1) = [M:B] \; .$$
This proves the first point of the lemma. The second point follows from the first point and the observation that $M \cap B' \subset (M \cap A') \cap (B \cap A')'$.
\end{proof}

\end{document}